%% file: spectrumconsistency_arxiv.tex
\documentclass[letterpaper, 10 pt, journal]{ieeetran}
\usepackage{cite}
\usepackage{amsmath,amssymb,amsfonts}
\usepackage{algorithmic}
\usepackage{graphicx}
\usepackage{textcomp}
\def\BibTeX{{\rm B\kern-.05em{\sc i\kern-.025em b}\kern-.08em
    T\kern-.1667em\lower.7ex\hbox{E}\kern-.125emX}}

\usepackage{amsmath,amsfonts,amssymb}
\usepackage{pdfpages}

\usepackage[bookmarks=false,colorlinks=true,citecolor=blue,linkcolor=blue]{hyperref}
\usepackage{mathtools}
\usepackage{bbm}
\usepackage{epsfig}
\usepackage{times}
\usepackage{color}
\usepackage[english]{babel}
\usepackage{graphicx}
\usepackage{subfigure}
\usepackage{standalone}
\usepackage{tikz}
\usepackage{nicefrac}
\usepackage{comment}

\def\argmin{\operatornamewithlimits{arg\,min}}
\def \interpolates{\cong}

\newcommand{\defeq}{\doteq}

\newcommand{\gramnplus}{K_{0}}

\newcommand{\newpart}{}

\newcommand{\BX}{\mathbb{X}}

\newcommand{\BC}{\mathbb{C}}

\newcommand{\CC}{\mathcal{C}}
\newcommand{\CS}{\mathcal{S}}
\newcommand{\CF}{\mathcal{F}}

\newcommand{\CB}{\mathcal{B}}

\newcommand{\CL}{\mathcal{L}}

\newcommand{\CH}{\mathcal{H}}

\newcommand{\ConfEll}{\mathcal{E}_{n_0}^n}
\newcommand{\ConfEllStar}{\mathcal{E}_{n_0}^*}
\newcommand{\ConfHyp}{\mathcal{R}_{n_0}^n}

\newcommand\econvergealmostsurely{\xrightarrow[n\to\infty]{\mbox{\normalfont\tiny a.s.}}}
\newcommand\convergealmostsurely{\mathrel{\stackrel{\makebox[0pt]{\mbox{\normalfont\tiny a.s.}}}{\longrightarrow}}}

\makeatletter
\newcommand*{\rom}[1]{\expandafter\@slowromancap\romannumeral #1@}
\makeatother
\def \CG{\mathcal{G}}
\def \CF{\mathcal{F}}

\def \CZ{\mathcal{Z}}

\definecolor{MyBlue}{rgb}{0.2,0.2,0.64}

\newcommand{\tr}{^\mathrm{T}}
\newcommand{\RR}{\mathbb{R}}

\def\argmin{\operatornamewithlimits{arg\,min}}

\newtheorem{assumption}{\bf A\!\!}
\newtheorem{theorem}{Theorem}
\newtheorem{corollary}{Corollary}
\newtheorem{lemma}{Lemma}

\newtheorem{remark}{Remark}
\newenvironment{proof}{\noindent{\em Proof.\,}}{\hfill$\Box$\\}

\newcommand{\norm}[1]{\left\lVert#1\right\rVert}    
    
\newlength{\dhatheight}

\graphicspath{ {./Noise-free output images/} {./Noisy output images/} {./Spectra images/} }

\begin{document}

\title{\LARGE \bf MiNCE: Nonparametric, Strongly Consistent Confidence Envelopes for Band-Limited Functions and their Smoothed Spectra}
\author{Bal{\'a}zs Csan{\'a}d Cs{\'a}ji, \IEEEmembership{Senior Member, IEEE}, \qquad\qquad \and B\'alint Horv\'ath
\thanks{This research was partially supported by the ``Robust Uncertainty Quantification for Learning and Control'' ADVANCED project of the National Research, Development and Innovation Office of Hungary (NKFIH), grant number 153390. The work was also supported by the European Commission through the DiGreeS project under grant number 101178079.
}%
\thanks{\newpart B.~Cs.~Cs\'aji is with HUN-REN SZTAKI: Institute for Computer Science and Control, Budapest, Hungary; and also with Institute of Mathematics, E\"otv\"os Lor\'and University (ELTE), Budapest, Hungary, {\tt\small csaji@sztaki.hu}}%
\thanks{\newpart B.~Horv\'ath is with HUN-REN SZTAKI: Institute for Computer Science and Control, 
Budapest, Hungary,
{\tt\small balint.horvath@sztaki.hu}}%
}

\hyphenation{pa-ram-et-ri-za-ti-on}

\maketitle
\thispagestyle{plain}
\pagestyle{plain}

\begin{abstract}
Minimum-norm confidence envelope strategies offer a nonparametric approach to constructing nonasymptotic, simultaneous confidence regions for band-limited functions, exploiting the theory of Reproducing Kernel Hilbert Spaces (RKHS). While the finite-sample coverage guarantees of these envelopes have been established, their consistency has not been analyzed so far. In this paper, we study this construction, here termed the Minimum-Norm Confidence Envelope (MiNCE) framework, and establish the strong uniform consistency of the resulting bands, both for noise-free and noisy observation models, under mild assumptions on the measurement noises. We further extend this formulation to the frequency domain, deriving nonasymptotic, simultaneous, strongly uniformly consistent confidence bands for the smoothed spectra. Numerical experiments in nonparametric regression and spectral estimation empirically confirm our theoretical results, illustrating the contraction of the confidence envelopes toward the target function as the sample size increases. 
\end{abstract}

\begin{IEEEkeywords}
statistical learning, stochastic systems, estimation, nonlinear system identification
\end{IEEEkeywords}

\section{Introduction}
\IEEEPARstart{O}{ne} of the fundamental problems in signal processing, system identification, adaptive control, machine learning, econometrics, and statistics is {\em regression}: given a finite sample of input-output measurements, we need to estimate an underlying {\em regression function} encoding the conditional expectation of the output given the input \cite{cucker2007learning}. Standard approaches, including linear and polynomial regression, splines, decision trees, kernel methods, and (deep) neural networks, typically provide {\em point estimates} by selecting the best model from a hypothesis class with respect to a given criterion \cite{gyorfi2002distribution}. Point estimates, however, are often insufficient for safety-critical applications and robust decision-making, where explicit uncertainty quantification via guaranteed {\em region estimates} is required.

Region estimation has several distinct problem formulations. Predicting 
the possible values of a
new (noisy) observation can be addressed by distribution-free methods such as
Interval Predictor Models (IPMs) based on the scenario approach
\cite{campi2009interval, garatti2019class} or Conformal Prediction (CP)
\cite{lei2014distribution}.  For confidence regions for the true data-generating function 
in the {\em parametric} setting, distribution-free approaches based on the Sign-Perturbed
Sums (SPS) method \cite{csaji2014sign} provide exact, user-chosen finite-sample
coverage \cite{Algo2018}. Alternatively, Gaussian Process (GP) regression
\cite{quinonero2005unifying, Rasmussen2006} offers a popular Bayesian framework
for constructing credible regions, though it relies on joint Gaussianity
that can be restrictive in practice. Strong distributional
assumptions can be relaxed, e.g., by quantile regression
\cite{koenker2001quantile}. In the nonparametric setting, nonasymptotic
confidence bands, assuming noise-free outputs, were recently built in Sobolev spaces
\cite{gamboa2025nonasympconfrkhs}.

In this paper, we study {\em nonparametric}, {\em simultaneous} confidence bands with {\em nonasymptotic} coverage for {\em band-limited} functions \cite{wingham2002reconstruction}. This class plays a foundational role in signal processing, underlying signal reconstruction, pulse shaping in digital communications, remote sensing, and frequency-division multiplexing. For parametric models, a confidence set in the parameter space induces a confidence set in the 
model
space; such {\em indirect} constructions, however, do not carry over to nonparametric
models, for which {\em direct} constructions are needed. Here, we build on our prior 
kernel-based interpolation strategies \cite{csaji2022nonparametric, csaji2023improving, horvath2023nonparametric, 11052262, horvath2025single}, 
which we call the {\em Minimum-Norm Confidence Envelope} (MiNCE) framework. While {\em nonasymptotic} coverage guarantees for these envelopes are available, e.g., \cite[Theorems 1 and 2]{csaji2022nonparametric}, their {\em consistency} has not been established.

The main contributions of the paper are as follows:
\vspace{1mm}
\begin{enumerate}
    \item {\em Strong Uniform Consistency}: We prove, assuming a known sampling distribution, that the MiNCE confidence regions are strongly uniformly consistent, both in the noise-free case and, under mild conditions on the observation noise, in the noisy case, as well. We establish the strong consistency of the abstract MiNCE regions in the function space and also that of the induced confidence bands, uniformly across all inputs. A consistent norm-ball outer approximation is also provided.\vspace{1mm}
    \item {\em Confidence Tubes for Smoothed Spectra}: We extend the MiNCE framework from spatial-domain regression to frequency-domain analysis, constructing nonasymptotic, nonparametric, simultaneous confidence tubes for the smoothed spectral density function, whose fibers are complex disks. We show that the MiNCE spectral confidence tubes have the same consistency guarantees as the spatial-domain bands, providing a mathematically sound foundation for safe, nonparametric spectral inference.
\end{enumerate}
\vspace{1.5mm}

We further validate our theoretical results numerically, in nonparametric regression and spectral estimation, illustrating the systematic, uniform contraction of the confidence envelopes toward the target function as the sample size increases.

\section{Kernels and Band-Limited Functions}
{\newpart Kernel methods have an immense range of applications in statistics, signal processing, system identification and machine learning \cite{pillonetto2014kernel}. 
In this section, we review some of their fundamental concepts \cite{cucker2007learning,berlinet2004reproducing} needed for our constructions.}
\subsection{Reproducing Kernel Hilbert Spaces}
A Hilbert space $\CH$ of $f: \BX \to \mathbb{R}$ functions with an inner product $\langle\cdot,\cdot\rangle_{\CH}$ is called a {\em Reproducing Kernel Hilbert Space} (RKHS), if each Dirac functional, which evaluates functions at a point,
$\delta_z: f \to f(z)$, is 
bounded for all $z \in \BX$, that is $\forall z \in \BX: \exists \, \kappa_z > 0$ with $|\hspace{0.3mm}\delta_z(f)\hspace{0.3mm}| \leq \kappa_z\, \| f \|_{\CH}$ for all $f \in \CH$.

Then, by building on the Riesz representation theorem, a unique {\em kernel}, $k: \BX \times \BX \to \mathbb{R}$,  can be constructed
encoding the Dirac functionals satisfying $\langle k(\cdot,z),f \rangle_{\CH} = f(z),$
for all 
$z \in \BX$ and $f \in \CH$, which formula is called the {\em reproducing property}.
As a special case of this property, we also have for all $z,s \in \BX$ {\newpart that} $k(z,s)=\langle k(\cdot,z),k(\cdot,s) \rangle_{\CH}.$ 
Therefore, the kernel of an RKHS is a symmetric and positive-definite function.

Furthermore, the Moore-Aronszajn theorem asserts that the converse statement holds true, as well: 
for every symmetric and positive-definite function $k: \BX \times \BX \to \mathbb{R}$, there exists a unique RKHS for which $k$ is its reproducing kernel \cite{berlinet2004reproducing}.

The {\em Gram} or kernel matrix of a given kernel $k$ w.r.t.\ (input) points $x_1, \dots, x_n$ is 
$K_{i,j} \defeq k(x_i,x_j)$, for all {\newpart  $i, j \in [n] \doteq \{1,\dots, n\}$}. Observe that $K \in \mathbb{R}^{n \times n}$ is always positive semi-definite. A kernel is called {\em strictly} positive-definite, if its Gram matrix is positive-definite for all {\em distinct} inputs $\{x_i\}$.

\subsection{Paley--Wiener Spaces}
A \textit{Paley--Wiener} (PW) space is a subspace of $\mathcal{L}^2 (\mathbb{R}^d)$, where for each $f \in \mathcal{H}$ the {\em support} of the {\em Fourier transform} of $f$ is included in a given hypercube $[-\eta_1, \eta_1\hspace{0.3mm}] \times \dots \times [-\eta_d, \eta_d\hspace{0.3mm}]$, where $\{\eta_k\}$ are positive hyper-parameters.

Paley--Wiener Spaces are RKHSs with the following {\em strictly} positive definite reproducing 
kernel. For all $u,v \in \mathbb{R}^d,$ let
$$k(u,v) \,\defeq\, \frac{1}{\pi^{d}} \prod_{k=1}^d \frac{\sin(\eta_k (u_k-v_k))}{u_k-v_k},$$
where, for convenience, $\sin(\eta \cdot 0)/0$ is defined to be $\eta$ \cite{yang2014quasi}. 

Paley--Wiener spaces are subspaces of $\mathcal{L}^2$; therefore, they inherit its inner product and norm, as well, i.e., $\langle f,g \rangle_\CH = \langle f,g \rangle_{\mathcal{L}^2}$. From now on, we work with the \textit{Paley--Wiener kernel}.

\subsection{Minimum-Norm Interpolation}
\label{sec:min-norm-int}
An important concept for the MiNCE constructions is the {\em minimum-norm interpolant}. Given a dataset of input-output pairs, $\{(x_k,z_k)\}$, where the inputs $\{x_k\}$ are distinct,
the element from $\mathcal{H}$ which interpolates every output $z_k$ at the corresponding input $x_k$, and has the smallest kernel norm, i.e.,
$$\hat{f}_n \,\defeq\, \argmin \big\{\,\|\hspace{0.3mm}f\hspace{0.4mm}\|_{\mathcal{H}} : f \in \mathcal{H}\hspace{1.5mm} \&\hspace{1.5mm} \forall\hspace{0.3mm} k \in [n]: f(x_k) =\, z_k   \,  \big\},$$
exists and it admits the following form \cite{berlinet2004reproducing}:
\begin{equation}
\label{min-norm-interpolant-formula}
\hat{f}_n(x) \,= \,\sum_{k=1}^n \hat{\alpha}_k k(x,x_k),
\end{equation}
for all 
$x \in \mathbb{R}^d$, where the weights are $\hat{\alpha} = K^{-1} z$ with $z \defeq (z_1,...,z_n)\tr$ and $\hat{\alpha} \defeq (\hat{\alpha}_1,...,\hat{\alpha}_n)\tr$, assuming $K$ is invertible.
From the reproducing property, we obtain $\|\hat{f}_n\|_\mathcal{H}^2 = \hat{\alpha}\tr K \hat{\alpha} = z\tr K^{-1}z.$ The kernel norm can be seen as a {\em measure of smoothness}. In  Paley--Wiener spaces it coincides with the $\mathcal{L}^2$ norm, which can measure the {\em energy} of signals, as well.

\section{Nonparametric {\newpart Confidence Bands}}

\subsection{Data Generation}
Let us consider $\CZ_n \defeq \{(x_1,y_1), (x_2,y_2), ..., (x_n,y_n)\}$, a finite sample of i.i.d.\ input-output data with an {\em unknown} $\mathbb{P}_{X,Y}$ joint distribution, where $x_k \in \mathbb{R}^d$ is an input vector and $y_k \in \mathbb{R}$ is a scalar output with $\mathbb{E}\big[y_k^2\big] < \infty$.
We will assume that
$$y_k \,=\, f_*(x_k)+\varepsilon_k,$$
 with $\mathbb{E}\big[\varepsilon_k\big] = 0,$ for $k \in [n] \defeq \{1,...,n\},$ where $\{\varepsilon_k\}$ are the {\em  noises} on the \textit{regression function} $f_*(t) = \mathbb{E} [\hspace{0.5mm}Y\hspace{0.4mm} |\hspace{0.5mm} X = t\hspace{0.6mm}].$

\subsection{Objectives: Coverage Guarantees and Strong Consistency}

Our primary goal is to construct {\em nonparametric} confidence bands for the unknown $f_*$ data-generating function that have guaranteed (user-chosen) coverage probabilities for {\em finite}, and possibly small number of data points. Moreover, the construction should be {\em distribution-free} w.r.t.\ the measurement noises, and it should be {\em simultaneously} guaranteed for all inputs.

More precisely, based on dataset $\CZ_n$, we build a {\em confidence band} $B_n: \mathbb{R}^d \to \mathbb{R} \times \mathbb{R}$, where $B_n(x) = (L_n(x), U_n(x))$ specifies the {\em endpoints} of an {\em interval estimate} for $f_*(x)$ at input $x \in \mathbb{R}^d$, such that for a given $\alpha \in (0,1)$ {\em risk probability},
    \begin{equation*}
    \nu(B_n) \,\defeq\, \mathbb{P}\big(\hspace{0.3mm}\forall x \in \mathbb{R}^d: L_n(x) \leq f_*(x) \leq U_n(x)\hspace{0.3mm}\big)\, \geq\, 1-\alpha,
\end{equation*}
    where $\nu(B_n)$ is called the \textit{reliability} of the confidence band.

We also want our confidence bands to {\em shrink} as the sample size increases. A confidence band construction is called {\em strongly uniformly consistent}, if as $n \to \infty$, we have
\begin{equation*}
\max\! \Big\{\sup_{x \in \RR^d} |\hspace{0.1mm}L_n(x) - f_*(x)|, \sup_{x \in \RR^d} |\hspace{0.3mm}U_n(x) - f_*(x)|\Big\} \convergealmostsurely\, 0.
\end{equation*}

This does not only mean that, asymptotically, $\forall x :L_{\infty}(x)=U_{\infty}(x) = f_* (x)$, but also that the convergence is {\em uniform} in all inputs. 
Therefore, the width of the confidence band should (a.s.) converge to zero uniformly over the domain of inputs.

\subsection{Main Assumptions}
The principal assumptions on the data are as follows:

\smallskip
\begin{assumption}
\label{assumption-iid-multivariate} 
{\em The dataset $(x_1, y_1), \dots, (x_n, y_n) \in \mathbb{R}^d \times \mathbb{R}$ is an i.i.d.\ sample of inputs and outputs; and $\{y_k\}$ have finite variance.}
\end{assumption}
\smallskip

\begin{assumption}
\label{assumption-positive-input-density} 
{\em The distribution of the inputs is known, it is absolutely continuous, and its density $h_*(x) > 0$, for all $x\in \mathbb{R}^d$.}
\end{assumption}
\smallskip

\begin{assumption}
\label{assumption-Paley-Wiener-space-multivariate} 
{\em Function $f_*$ is from a Paley--Wiener space and there is a (universal) constant $\varrho > 0$ with $\forall\,x \in \mathbb{R}^d:  
f_*^2(x) \leq \varrho \, h_*(x).$ }
\end{assumption}
\smallskip

The i.i.d.\ requirement in A\ref{assumption-iid-multivariate} is standard in statistics and related fields, such as machine learning. The assumption on the full support of the input distribution, A\ref{assumption-positive-input-density}, is needed to eventually obtain information about the {\em whole} regression function. For simplicity, A\ref{assumption-positive-input-density} also assumes that the input distribution is known. This simplifies the theoretical analysis, and it is a relatively mild requirement in practice: the sampling mechanism is often known, and even when it is not, it can typically be estimated and used as a plug-in for $h_*$. We leave the analysis of estimated input distributions to future work.

The last assumption, A\ref{assumption-Paley-Wiener-space-multivariate}, serves two purposes: 
it restricts the model class to Paley--Wiener spaces, and it ensures that random observations 
carry enough information to be able to successfully {\em generalize} to unobserved inputs. The constant $\varrho$ measures  how ``informative'' querying $f_*$ at a random input is: a smaller $\varrho$ means that the 
observations are concentrated on the more ``interesting'' (higher energy) parts of function $f_*$.

This assumption also places an implicit restriction on the {\em tail decay rate} of the input 
distribution $h_*$: since $f_*^2(x) \leq \varrho\, h_*(x)$ must hold everywhere,
the tails of $h_*$ cannot decay faster than $f_*^2$. 
By the Paley--Wiener theorem \cite{rudin1987real}, non-zero band-limited functions extend to entire functions of exponential type and hence, by the 
log-integral condition for the Cartwright class, they cannot decay super-exponentially.
Thus, 
distributions with super-exponentially decaying tails, such as the Gaussian, are incompatible with this assumption. In our experiments, we 
used input 
distributions with polynomially decaying tails, such as the Student-$t$ distribution, 
which are compatible with A\ref{assumption-Paley-Wiener-space-multivariate}.

\subsection{Overview of the Construction}

In this section, we briefly overview the main ideas behind the MiNCE construction, also see \cite{csaji2022nonparametric}. The main steps are:
\smallskip

\begin{enumerate}
    \item[(i)] First, we build a guaranteed {\em simultaneous confidence set} for some of true (noiseless) outputs of the target function at the {\em observed} inputs. Namely, we construct a set $\Theta \subseteq \mathbb{R}^{n_0}$ that stochastically guarantees to contain $f_*(x_k)$, for $k \in [n_0]$, where $n_0 \leq n$ is user-chosen (as the data is i.i.d., it is w.l.o.g.\ that we use the first $n_0$). 
    This is a nontrivial task, nonetheless it is ``easier'' than constructing a confidence band for the  {\em whole} function. For this step, e.g., we can build on the results of \cite{csaji2019distribution}.
    \smallskip
    \item[(ii)] Using the confidence set $\Theta \subseteq \mathbb{R}^{n_0}$ for the true values of $f_*$ at some {\em observed} inputs, constructed in step (i), we calculate a high probability {\em upper bound}, $\tau$, for the kernel {\em norm} (square) of the true function (recall that this norm is a smoothness measure of the target function).
    \smallskip
    \item[(iii)] Then, for each {\em input query} point $x_0 \in \mathbb{R}^d$, we can construct a {\em confidence interval} for $f_*(x_0)$ as follows. We keep a candidate value $z_0$ in the confidence region {\em if and only if} there is a $z = (z_1, \dots, z_{n_0})\tr \in \Theta$, such that the {\em minimum-norm interpolation} of the dataset $\{(x_k,z_k)\}_{k=1}^{n_0} \cup \{(x_0,z_0) \}$ has a norm (square) less than or equal to $\tau$ (i.e., our upper bound for $\norm{f_*}_\mathcal{H}^2$).
\end{enumerate}
\smallskip

In order to make this approach practical, apart from the method in (i), we need a way to guarantee an upper bound for the norm (square) of the true function. Additionally, we also need to give an efficient method to compute the endpoints of the confidence intervals for any query input $x_0 \in \mathbb{R}^d$.

Note that step (i) is superfluous in those cases, when there are no measurement noises, that is if $\varepsilon_k = 0$ for $k \in [n]$, since then we can directly work with the $\Theta = \{f_*(x_k)\}$ values.

\section{Outputs without Measurement Noises}
\label{sec-noise-free-known density}

We start with a simplified problem. In this case, we assume that we perfectly observe the regression function at random inputs (no measurement noises), that is, $\forall\,k \in [n]$, $y_k = f_*(x_k)$.

In this noise-free setup, we can recall the Nyquist–Shannon sampling theorem, according to which a band-limited function can be perfectly reconstructed from the (equidistant) samples, assuming that the sampling rate exceeds twice the maximum frequency. However, if we just have a few random observations, we would still like to have at least a region estimate.

\subsection{Norm Bounds and Abstract Regions}
To estimate $\|f_*\|^2_{\CH}$, MiNCE applies {\em importance sampling} \cite{tokdar2010importance}. 
By the {\em strong law of large numbers} (SLLN), we have
\begin{equation}
\label{noise-free-density-convergence}
\begin{aligned}
\frac{1}{n} \sum_{i=1}^n \frac{f_*^2(x_i)}{h_*(x_i)}\,  \convergealmostsurely &\;\,\mathbb{E} \biggr[ \frac{f_*^2(x)}{h_*(x)} \biggr]  = \int_{\mathbb{R}^d} \frac{f_*^2(t)}{h_*(t)} h_*(t) \mbox{d}t \\ &= \int_{\mathbb{R}^d} f_*^2(t) \mbox{d}t = \norm{f_*}_2^2 = \norm{f_*}_\CH^2.
\end{aligned}
\end{equation}
Based on this idea, we can construct a guaranteed upper bound for the norm (square) of the regression function \cite{horvath2023nonparametric} by 

\medskip
\begin{lemma}
\label{lemma-norm-estimation-known-density-noisefree}
{\em
Assume A\ref{assumption-iid-multivariate}, A\ref{assumption-positive-input-density}, A\ref{assumption-Paley-Wiener-space-multivariate} and that $y_k = f_*(x_k)$ for $k \in [n]$. 
Then, for any $\alpha \in (0,1)$ and sample size $n$, we have
$$\mathbb{P}( \norm{f_*}_\mathcal{H}^2 \leq \kappa_n) \,\geq \,1-\alpha,$$
with the following choice of the upper bound $\kappa_n:$
$$\kappa_n\, \defeq\, \frac{1}{n} \sum_{k=1}^n \frac{y_k^2}{h_*(x_k)} + \varrho \, \sqrt{\frac{\ln(1/\alpha)}{2n}}.$$}
\end{lemma}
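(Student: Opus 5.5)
The plan is to apply Hoeffding's inequality to the i.i.d.\ importance-sampling terms $Z_k \defeq y_k^2/h_*(x_k) = f_*^2(x_k)/h_*(x_k)$, $k \in [n]$. By A\ref{assumption-iid-multivariate}, the pairs $(x_k,y_k)$ are i.i.d. Since there is no measurement noise, $y_k = f_*(x_k)$, so the $Z_k$ are i.i.d.\ measurable functions of the inputs. A\ref{assumption-positive-input-density} guarantees $h_*(x_k)>0$, so each $Z_k$ is well defined.

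The key boundedness step comes from A\ref{assumption-Paley-Wiener-space-multivariate}. The inequality $f_*^2(x) \leq \varrho\, h_*(x)$ for all $x$ gives $0 \leq Z_k \leq \varrho$ almost surely. The mean is already computed in \eqref{noise-free-density-convergence}:
\[
\mathbb{E}[Z_k] = \int_{\mathbb{R}^d} f_*^2(t)\,\mbox{d}t = \norm{f_*}_2^2 = \norm{f_*}_\CH^2 .
\]
This uses that the Paley--Wiener norm coincides with the $\mathcal{L}^2$ norm, and it is finite because $f_* \in \mathcal{L}^2$.

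Now apply the one-sided Hoeffding inequality to the bounded i.i.d.\ variables $Z_k \in [0,\varrho]$. For every $t>0$,
\[
\mathbb{P}\Big( \norm{f_*}_\CH^2 - \frac{1}{n}\sum_{k=1}^n Z_k \geq t \Big) \leq \exp\!\Big(-\frac{2 n t^2}{\varrho^2}\Big).
\]
Choose $t$ so that the right-hand side equals $\alpha$, i.e.\ $t = \varrho\sqrt{\ln(1/\alpha)/(2n)}$. The complementary event then gives $\norm{f_*}_\CH^2 < \frac{1}{n}\sum_k Z_k + t = \kappa_n$ with probability at least $1-\alpha$. This event is contained in $\{\norm{f_*}_\CH^2 \leq \kappa_n\}$, which yields the claim.

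I do not expect a real obstacle. The only point needing care is verifying the almost-sure bound $Z_k\in[0,\varrho]$, which is exactly what A\ref{assumption-Paley-Wiener-space-multivariate} is designed to supply. A minor technicality is that the inequality must hold on a set of full $h_*$-measure, and it in fact holds pointwise everywhere. Everything else is a direct use of Hoeffding's inequality with range $\varrho$ and the identity $\mathbb{E}[Z_k]=\norm{f_*}_\CH^2$ from \eqref{noise-free-density-convergence}.
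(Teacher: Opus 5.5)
Your proposal is correct and matches the paper's proof. Both apply one-sided Hoeffding's inequality to the i.i.d.\ importance weights $f_*^2(x_k)/h_*(x_k) \in [0,\varrho]$, whose mean is $\norm{f_*}_\CH^2$, and choose $t = \varrho\sqrt{\ln(1/\alpha)/(2n)}$ so that the deviation probability is at most $\alpha$.
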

\smallskip
Lemma \ref{lemma-norm-estimation-known-density-noisefree} was originally given in \cite{horvath2023nonparametric} without a proof. Now, for completeness, we include its proof in Appendix \ref{appendix-lemma1-proof}. It builds on Hoeffding's inequality, using that $\forall\, x: f_*^2(x)/h_*(x) \leq \varrho.$

\smallskip
\begin{remark}
{\em The upper bound of Lemma \ref{lemma-norm-estimation-known-density-noisefree} can be
replaced by any concentration inequality providing the same guarantee. In \cite{11052262}, we studied using
a uniformly randomized Hoeffding and an empirical Bernstein inequality. In the
latter case, 
\begin{equation}
\label{eq:emp-bernstein}
\kappa'_n \,\defeq\, \frac{1}{n} \sum_{k=1}^n \frac{y_k^2}{h_*(x_k)}
+ \sqrt{\frac{2\hspace{0.3mm}V_n \ln (2/\alpha)}{n}}
+ \varrho\,\frac{7 \ln (2/\alpha)}{3(n-1)},
\end{equation}
 where $V_n$ is the unbiased empirical variance of 
$\{\hspace{0.3mm}y_k^2/h_*(x_k)\hspace{0.3mm}\}$.
}
\end{remark}
\smallskip

Recall that the dataset is $\CZ_n \defeq \{ (x_1,y_1), \dots, (x_n,y_n) \}$.
In the {\em abstract} sense, our {\em confidence region} in the  RKHS is
\begin{align}
    \label{eq:absreg}
    \mathcal{C}_n\, \defeq\, \big\{\hspace{0.3mm} f \in \CH \,\mid\; &  f\hspace{-0.3mm}
    \interpolates\hspace{-0.3mm}\CZ_n \,\land\,  \norm{f}_\mathcal{H}^2 \leq \kappa_n \hspace{0.3mm}\big\},
\end{align}
where $\kappa_n$ is a norm bound with $\mathbb{P}\big(\norm{f_*}_{\CH}^2 \leq \kappa_n \hspace{0.3mm}\big) \geq 1-\alpha$ and 
``$f \interpolates \CZ_n$'' denotes that $f$ {\em interpolates} $\{(x_i, y_i)\}$, that is,
$$f \interpolates \CZ_n\, \stackrel{\text{def}}{\iff}\,
\forall\, i \in [n]: f(x_i) =\hspace{0.3mm} y_i.$$
 
As $f_* \interpolates \CZ_n$, since now $\forall i: y_i = f_*(x_i)$, we clearly have\, $\mathbb{P}\big(f_*\in \mathcal{C}_n \hspace{0.3mm}\big) \geq 1-\alpha$,\, but $\mathcal{C}_n$ is hard to construct in practice. We need an efficient algorithm to build the confidence band.

\subsection{Confidence Bands and Interval Endpoints}
We can construct a {\em confidence band} $\CB_n$ from the {\em abstract} region $\mathcal{C}_n$, by computing an envelope for the potential outputs,
\begin{equation}
\label{eq:confband}
\CB_n \defeq \big\{ f \in \CH \mid \forall x\in \mathbb{R}^d: L_n(x) \leq f(x) \leq U_n(x) \big\},
\end{equation}
where the {\em interval endpoints} for input $x \in \mathbb{R}^d$ are defined 
by
\begin{equation}
\label{eq:confband2}
L_n(x) \defeq \inf_{f \in \mathcal{C}_n}\! f(x),\qquad \text{and}\qquad U_n(x) \defeq \sup_{f \in \mathcal{C}_n}\! f(x).
\end{equation}
From the construction, it is immediate that we have $\mathcal{C}_n \subseteq \CB_n$. Hence, 
$\norm{f_*}_\mathcal{H}^2 \leq \kappa_n \Rightarrow f_* \in \mathcal{C}_n \Rightarrow  f_* \in \CB_n$. Combining this argument with Lemma \ref{lemma-norm-estimation-known-density-noisefree}, we get the following guarantee \cite{horvath2023nonparametric}

\smallskip
\begin{theorem}
\label{theorem-reliability-noisefree}
{\em Assume A\ref{assumption-iid-multivariate}, A\ref{assumption-positive-input-density}, A\ref{assumption-Paley-Wiener-space-multivariate} and that $y_k = f_*(x_k)$ for $k \in [n]$. Then, for any risk probability $\alpha \in (0,1)$ and sample size $n \in \mathbb{N}$, the {\newpart confidence} band has coverage probability
$$\nu(B_n)\, =\, \mathbb{P}(f_* \in \CB_n)\, \geq\, 1-\alpha.$$}
\end{theorem}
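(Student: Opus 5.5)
The proof is a chain of event inclusions closed off by Lemma~\ref{lemma-norm-estimation-known-density-noisefree}. We show that the event $\{\norm{f_*}_\mathcal{H}^2 \leq \kappa_n\}$ is contained in the event $\{f_* \in \CB_n\}$, the latter being exactly the simultaneous coverage event $\{\forall x\in\RR^d: L_n(x)\le f_*(x)\le U_n(x)\}$. Monotonicity of $\mathbb{P}$ then gives $\nu(B_n) = \mathbb{P}(f_*\in\CB_n) \geq \mathbb{P}(\norm{f_*}_\mathcal{H}^2\le\kappa_n) \geq 1-\alpha$. The equality $\nu(B_n)=\mathbb{P}(f_*\in\CB_n)$ is just the definition (\ref{eq:confband}) unpacked, since $f_*\in\CH$ by A\ref{assumption-Paley-Wiener-space-multivariate}.

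The first inclusion is $\{\norm{f_*}_\mathcal{H}^2\le\kappa_n\}\subseteq\{f_*\in\mathcal{C}_n\}$. In the noise-free model $y_i=f_*(x_i)$ for every $i\in[n]$, so $f_*\interpolates\CZ_n$ holds surely. Hence on the norm event both defining conditions of (\ref{eq:absreg}) are satisfied.

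The second inclusion is $\mathcal{C}_n\subseteq\CB_n$, and it is deterministic. For any $f\in\mathcal{C}_n$ and any $x$, the definitions (\ref{eq:confband2}) of $L_n(x)$ and $U_n(x)$ as the infimum and supremum over $\mathcal{C}_n$ give $L_n(x)\le f(x)\le U_n(x)$. This holds simultaneously for all $x$, because the same $f$ is used at every point. On the event in question $\mathcal{C}_n$ contains $f_*$, so it is nonempty and the inf/sup are not vacuous; off that event nothing needs to be shown.

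The only step with real content is Lemma~\ref{lemma-norm-estimation-known-density-noisefree}, which we take as given: Hoeffding's inequality applied to the i.i.d.\ variables $f_*^2(x_k)/h_*(x_k)\in[0,\varrho]$, whose mean is $\norm{f_*}_\mathcal{H}^2$ by (\ref{noise-free-density-convergence}). The main point of care is measurability, i.e.\ that $\{f_*\in\CB_n\}$ is an event at all. This can be sidestepped by phrasing the bound as an inner probability, or by noting that it contains the measurable event $\{\norm{f_*}_\mathcal{H}^2\le\kappa_n\}$ (as $\kappa_n$ is a measurable function of the data). The resulting guarantee holds for every finite $n$ and every $\alpha\in(0,1)$, with no asymptotics involved.
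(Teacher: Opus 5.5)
Your proposal is correct and follows the paper's own argument: the chain $\norm{f_*}_\mathcal{H}^2 \leq \kappa_n \Rightarrow f_* \in \mathcal{C}_n \Rightarrow f_* \in \CB_n$ (using $f_* \interpolates \CZ_n$ in the noise-free model and the deterministic inclusion $\mathcal{C}_n \subseteq \CB_n$), closed off by Lemma~\ref{lemma-norm-estimation-known-density-noisefree}. Your remarks on nonemptiness of $\mathcal{C}_n$ and on measurability are harmless refinements that the paper leaves implicit.
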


Note the distinction between $B_n$ and $\mathcal{B}_n$: the former is a mapping that defines confidence intervals for each input, while the latter is the functional subset of the Hilbert space containing all functions consistent with those bounds.

Moreover, for any given {\em query input} $x_0 \in \mathbb{R}^d$, the endpoints of the confidence interval for $f_*(x_0)$, see \eqref{eq:confband2}, can be constructed {\em analytically} \cite{csaji2022nonparametric}. We can exploit that the {\em minimum-norm interpolant} \eqref{min-norm-interpolant-formula} of an input-output dataset, and hence the minimally needed kernel norm, can be calculated analytically.

If $x_0 = x_k$ for any $k \in [n]$, the confidence interval at $x_0$ is $L_n(x_0)=U_n(x_0)=y_k$.  
Henceforth assume that $x_0 \notin \{x_k\}$. First, the Gram matrix is extended 
with the query input $x_0$, 
    $$K_0(i+1,j+1)\, \defeq\, k(x_i,x_j),$$
    for $i, j = 0,1,...,n$.
Then, we compute the highest and the lowest $y_0$ values which can be interpolated with a function from $\CH$ having {\em at most} $\kappa_n$ as its squared norm. This task leads to the following two {\em convex} optimization problems \cite{csaji2022nonparametric}:
    \begin{equation}
    \label{noiseless-opt-min-max}
    \begin{split}
    \mbox{min\,/\,max} &\quad y_{0} \\[0.5mm]
    \mbox{subject to} &\quad (y_0, y\tr)  \gramnplus^{-1} (y_0, y\tr)\tr \leq\, \kappa_n\\[1mm]
    \end{split}
    \end{equation}
    where ``min\,/\,max'' means that we have to separately solve the problem as a minimization and also as a maximization.

    Note that $K_0$ is (a.s.) invertible, as the Paley--Wiener kernel is {\em strictly} positive definite and $x_0, x_1, \dots, x_n$ are (a.s.) distinct.
    
Both problems of \eqref{noiseless-opt-min-max} can be solved {\em analytically} using the technique of {\em Lagrange multipliers} \cite{csaji2022nonparametric}. In order to obtain their solutions, let us partition the inverse of matrix $K_0$, as
$$
 \begin{bmatrix}
 \; c & b\tr\\
 \; b & A
 \,\end{bmatrix} \defeq\, \gramnplus^{-1}.
$$
The solutions of the min and max problems \eqref{noiseless-opt-min-max} are exactly the solutions of
$a_0 y_0^2 + b_0 y_0 + c_0 = 0$, where  $a_0 \defeq c$, $b_0 \defeq 2b\tr y$ and $c_0 = y\tr\hspace{-0.3mm} A y - \kappa_n$. If there is no solution, $B_n(x_0) \defeq \emptyset$; otherwise $B_n(x_0) \defeq (\hspace{0.3mm}y_{\mathrm{min}},\, y_{\mathrm{max}}\hspace{0.3mm})$, where $y_{\mathrm{min}} \leq y_{\mathrm{max}}$ are the solutions of the quadratic equation (they can coincide).

By exploiting {\em Schur complements} \cite{horvath2025single}, we do not need to invert $K_0$ for every $x_0$. Combining this with the quadratic formula, the confidence interval at $x_0$ can also be written as
\begin{equation*}
B_n(x_0) = \hat{f}_n(x_0) \pm\! \sqrt{(k(x_0,x_0)-k_0\tr K^{-1}k_0)\,(\kappa_n-\hat{\kappa}_n)},
\end{equation*}
if $\kappa_n \geq \hat{\kappa}_n$, and $B_n(x_0) = \emptyset$ otherwise, where $\hat{f}_n \in \CH$ is the {\em minimum-norm interpolant} of dataset $\CZ_n$, $\hat{\kappa}_n \doteq \|\hat{f}_n\|_\CH^2$, 
$k_0 = (k(x_0,x_1), \dots, k(x_0, x_n))\tr$, and $K$ is the kernel matrix based on $x_1, \dots, x_n$. Thus, the bands are centered around $\hat{f}_n$.

\begin{figure}[!t]
    \centering
	\hspace*{-2mm}	 	
	\includegraphics[width = \columnwidth]{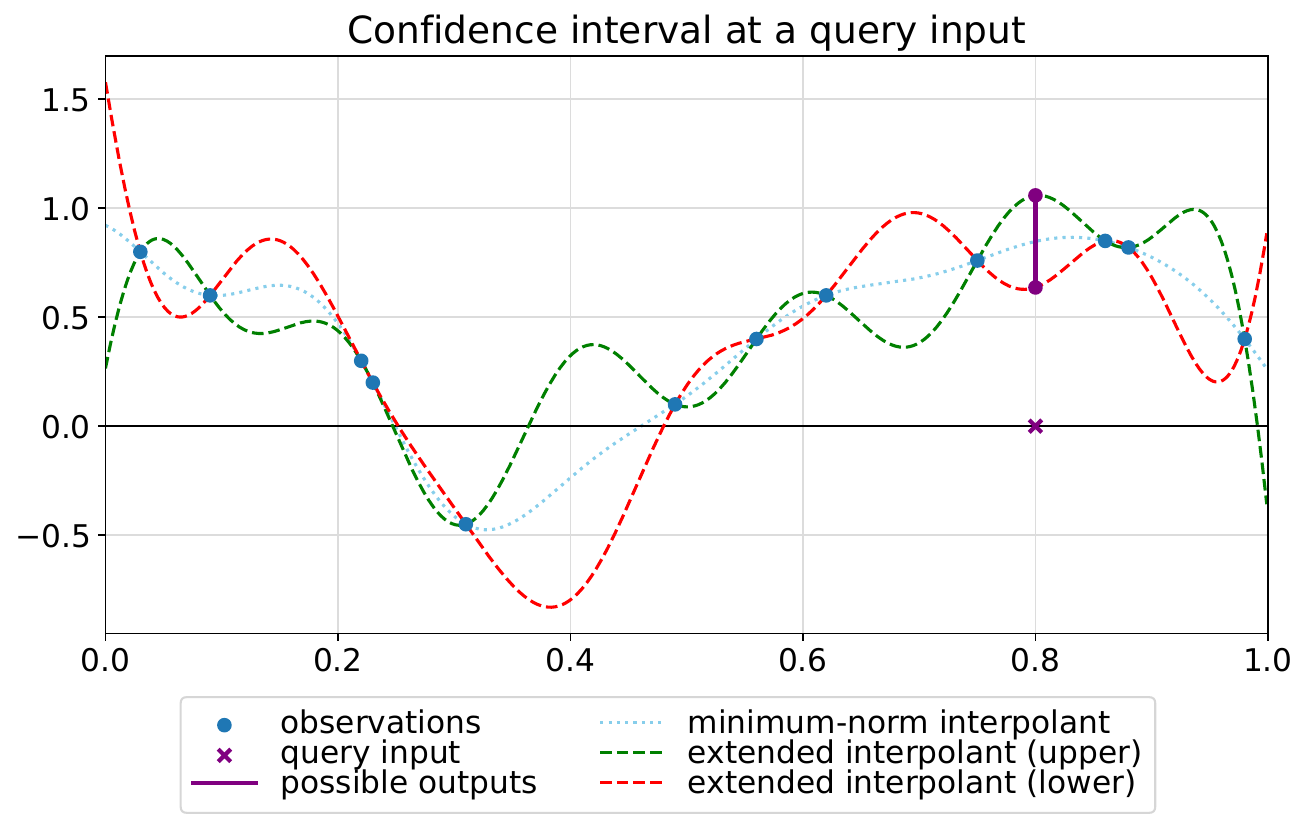} 	
    \caption{The endpoints of the confidence interval at a query input illustrating the construction of the confidence band $\CB_n$ according to \eqref{eq:absreg}, \eqref{eq:confband} and \eqref{eq:confband2}.}
\label{fig:interpolants}
\end{figure}

Figure \ref{fig:interpolants} illustrates the core idea of the method. For a fixed query input $x_0$, the {\em minimum} and the {\em maximum} $y_0$ values satisfying \eqref{noiseless-opt-min-max} specify the {\em endpoints} of the confidence interval.

\subsection{Strong Uniform Consistency}
Now, we will show that the MiNCE construction not only guarantees the inclusion of the regression function with a user-chosen (nonasymptotic) probability, but also that the resulting confidence bands are {\em strongly uniformly consistent}. 

Our first observation is about our kernel norm bounds:
\smallskip

\begin{lemma}
\label{norm-consistency}
{\em
Assuming A\ref{assumption-iid-multivariate}, A\ref{assumption-positive-input-density}, A\ref{assumption-Paley-Wiener-space-multivariate} and $y_k = f_*(x_k)$ for $k \in [n]$, then the norm bounds are consistent, that is,  as $n \to \infty$, 
$$\kappa_n \defeq \underbrace{\frac{1}{n}\sum_{k=1}^n \frac{y_k^2}{h_*(x_k)}}_{\kappa_{n,1}} + \underbrace{\varrho\,\sqrt{\frac{\ln (1/\alpha)}{2n}}}_{\kappa_{n,2}} \convergealmostsurely  \kappa_*\defeq \norm{f_*}_{\CH}^2.\vspace{-2mm}$$}
\end{lemma}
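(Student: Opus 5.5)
The plan is to treat the two summands separately and combine them through the elementary fact that the sum of an almost surely convergent sequence and a deterministic convergent sequence converges almost surely to the sum of the limits.

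\emph{Deterministic term.} Here $\kappa_{n,2} = \varrho\sqrt{\ln(1/\alpha)/(2n)}$ does not depend on the data. Since $\alpha\in(0,1)$ and $\varrho>0$ are fixed constants, this term is of order $n^{-1/2}$ and tends to $0$ as $n\to\infty$. Viewed as constant random variables, it therefore converges to $0$ almost surely.

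\emph{Random term.} In the noise-free case $y_k = f_*(x_k)$, so $\kappa_{n,1} = \frac{1}{n}\sum_{k=1}^n g(x_k)$ with $g(x) \defeq f_*^2(x)/h_*(x)$. This $g$ is well defined because $h_*>0$ everywhere by A\ref{assumption-positive-input-density}. By A\ref{assumption-iid-multivariate}, the inputs $x_k$ are i.i.d.\ with density $h_*$, hence the variables $g(x_k)$ are i.i.d.\ as well. By A\ref{assumption-Paley-Wiener-space-multivariate}, $0\le g\le\varrho$, so these variables are bounded and in particular integrable.

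\emph{Strong law and identification of the limit.} Kolmogorov's strong law of large numbers gives $\kappa_{n,1}\convergealmostsurely \mathbb{E}[g(x_1)]$. Computing this expectation exactly as in \eqref{noise-free-density-convergence},
\[
\mathbb{E}[g(x_1)] = \int_{\mathbb{R}^d} \frac{f_*^2(t)}{h_*(t)}\,h_*(t)\,\mbox{d}t = \norm{f_*}_2^2 .
\]
This equals $\norm{f_*}_\CH^2 = \kappa_*$, because the Paley--Wiener space inherits the $\mathcal{L}^2$ norm. Adding the two limits then yields $\kappa_n\convergealmostsurely\kappa_*$.

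\emph{Main obstacle.} There is no real obstacle. The only step needing care is checking the hypotheses of the SLLN, namely integrability of $g(x_1)$ and equality of the limit with $\norm{f_*}_\CH^2$. Both follow from the bound $f_*^2\le\varrho h_*$ in A\ref{assumption-Paley-Wiener-space-multivariate} together with $f_*\in\mathcal{L}^2$ and the isometry between the RKHS norm and the $\mathcal{L}^2$ norm on the Paley--Wiener space.
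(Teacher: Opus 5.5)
Your proposal is correct and follows the paper's proof: $\kappa_{n,2}\to 0$ deterministically, and the strong law of large numbers applied to the i.i.d.\ bounded variables $f_*^2(x_k)/h_*(x_k)$ gives $\kappa_{n,1}\convergealmostsurely\norm{f_*}_2^2=\norm{f_*}_\CH^2$, as in \eqref{noise-free-density-convergence}. You simply spell out the SLLN hypotheses, integrability via $0\le g\le\varrho$, and the identification of the limit, which the paper leaves implicit.
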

\smallskip

\begin{proof}
Clearly, we have $\kappa_{n,2} \to 0$. Moreover, by the strong law of large numbers, $\kappa_{n,1} \convergealmostsurely \norm{f_*}_{\CH}^2$, as we saw in \eqref{noise-free-density-convergence}.
\end{proof}

Next, we argue that it is sufficient to prove the uniform consistency of the {\em abstract} sets $\{\mathcal{C}_n\}$ in the RKHS norm, as this implies the consistency of the {\em confidence bands}, $\{\CB_n\}$.
\smallskip
\begin{theorem}
\label{theorem-consistency-abstract-endpoint-based}
{\em Let $\CH$ be any RKHS of\hspace{0.3mm} $\mathbb{R}^d \to \mathbb{R}$ functions with a bounded kernel $k$. 
Let $f_* \in \CH$ and let $\{\CC_n\}$ be a sequence of nonempty confidence regions in $\CH$, such that, as $n \to \infty$,
$$
\sup_{f\in\mathcal{C}_n}\!\|f - f_*\|_{\mathcal{H}}\,\convergealmostsurely\;0.
$$
Let $\{\CB_n\}$ be a confidence band sequence defined by \eqref{eq:confband} and \eqref{eq:confband2} based on regions $\{\CC_n\}$. Then, as $n \to \infty$, we have 
\begin{equation*}
\max\hspace{-0.5mm}\big\{\hspace{0.5mm} \|L_n - f_*\|_\infty, \|\hspace{0.3mm} U_n - f_*\|_\infty \big\} \convergealmostsurely\, 0.
\end{equation*}}
\end{theorem}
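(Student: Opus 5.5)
The plan is to reduce the uniform (sup-norm) deviation of the endpoints to the RKHS-norm deviation of the abstract regions, using the reproducing property and the boundedness of the kernel. Let $M \defeq \sup_{x\in\RR^d} k(x,x) < \infty$, which is finite since $k$ is bounded. For any $f\in\CH$ and $x\in\RR^d$, the reproducing property and Cauchy--Schwarz give
\[
|f(x)-f_*(x)| = |\langle f-f_*, k(\cdot,x)\rangle_{\CH}| \le \|f-f_*\|_{\CH}\,\sqrt{k(x,x)} \le \sqrt{M}\,\|f-f_*\|_{\CH}.
\]
Setting $D_n \defeq \sup_{f\in\CC_n}\|f-f_*\|_{\CH}$, we therefore get, for every $x$ and every $f\in\CC_n$,
\[
f_*(x) - \sqrt{M}D_n \le f(x) \le f_*(x)+\sqrt{M}D_n .
\]

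Next we pass to the endpoints. Since $\CC_n$ is nonempty, the infimum and supremum in \eqref{eq:confband2} are taken over a nonempty set, so $L_n(x)$ and $U_n(x)$ are finite real numbers whenever $D_n<\infty$. Taking the infimum and supremum over $f\in\CC_n$ in the two-sided bound yields
\[
f_*(x)-\sqrt{M}D_n \le L_n(x) \le U_n(x) \le f_*(x)+\sqrt{M}D_n .
\]
Hence $|L_n(x)-f_*(x)|\le\sqrt{M}D_n$ and $|U_n(x)-f_*(x)|\le\sqrt{M}D_n$. Because the bound does not depend on $x$, taking the supremum over $x\in\RR^d$ gives
\[
\max\{\|L_n-f_*\|_\infty,\|U_n-f_*\|_\infty\}\le\sqrt{M}\,D_n .
\]

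Finally, on the probability-one event where $D_n\to0$ (the hypothesis), the right-hand side tends to zero. This gives almost sure convergence, and there are no measurability subtleties beyond the event inclusion.

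The only step needing care is the use of nonemptiness. It guarantees $L_n\le U_n$, and it rules out the degenerate values $L_n=+\infty$ and $U_n=-\infty$ that an empty $\CC_n$ would produce. It also matters that the bound is uniform in $x$, which is exactly where the boundedness of the kernel enters. For the Paley--Wiener kernel, $k(x,x)=\pi^{-d}\prod_k\eta_k$ is constant, so the hypothesis is satisfied. I expect no real obstacle: the substantive difficulty, proving $D_n\to0$ for the MiNCE regions, is deferred to later results.
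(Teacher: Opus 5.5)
Your proposal is correct and follows essentially the same route as the paper. Both use the reproducing property and Cauchy--Schwarz with $\sup_x k(x,x)<\infty$ to bound $\sup_{f\in\CC_n}\|f-f_*\|_\infty$ by a constant times the RKHS deviation, then control $|L_n(x)-f_*(x)|$ and $|U_n(x)-f_*(x)|$ by $\sup_{f\in\CC_n}|f(x)-f_*(x)|$ and take the supremum over $x$; you additionally spell out the role of nonemptiness, which the paper leaves implicit.
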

The proof of Theorem \ref{theorem-consistency-abstract-endpoint-based} can be found in Appendix \ref{appendix-abst-to-band-consistency-proof}.

Therefore, it is enough to show the consistency of $\{\CC_n\}$. In fact, we will show the consistency of a sequence of even larger sets. Let $\kappa^*_n \defeq \max \{ \kappa_*, \kappa_n \}$, where $\kappa_* \defeq \norm{f_*}_\CH^2 $. Then
\begin{equation}
\label{eq:absstarreg}
\mathcal{C}^*_n \defeq \big\{\hspace{0.3mm} f \in \CH: f \interpolates \CZ_n\,\land\, \norm{f}_\CH^2 \leq \kappa^*_n\hspace{0.3mm}\big\}.
\end{equation}
Obviously, $\mathcal{C}_n \subseteq \mathcal{C}^*_n$, so if we show the consistency of $\{\mathcal{C}^*_n\}$ it also implies the consistency of $\{\mathcal{C}_n\}$, which is sufficient to show the consistency of our confidence bands $\{\CB_n\}$. Of course, $\mathcal{C}^*_n$ is just a theoretical construction, since we do not know $\kappa_*$ in practice. It will only serve as a theoretical tool.

Let $\CH_n \defeq \text{span}\{\hspace{0.3mm} k(\cdot,x_k): k \in [n] \hspace{0.3mm}\}$. Recall that $\hat{f}_n \in \CH$ is the {\em minimum-norm interpolant} of dataset $\CZ_n$. Then, $\hat{f}_n \in \CH_n$, as discussed in Section \ref{sec:min-norm-int}. As for all $f \in \CH$, $f \interpolates \CZ_n$ implies $\|\hat{f}_n\|_\CH \leq \|f\|_\CH$, and $f_* \interpolates \CZ_n$ (noise-free observations), we have $\|\hat{f}_n\|^2_\CH \leq \kappa_* = \|f_*\|^2_\CH$. Hence, we have $f_*, \hat{f}_n \in \CC^*_n$.

\smallskip
\begin{remark}
\label{remark-min-interp-norm-conv}
{\em An important property of $\hat{f}_n$
is that, in the noise-free case, it is the orthogonal projection of $f_*$ onto $\CH_n$, as the interpolants of $\CZ_n$ form the affine set $f_* + \CH_n^{\bot}$. Under A\ref{assumption-iid-multivariate} and A\ref{assumption-positive-input-density}, the inputs are (a.s.) dense in $\mathbb{R}^d$, hence $g \perp \CH_n$ for all $n$ implies $g \equiv 0$, as every $g \in \CH$ is continuous, because the Paley--Wiener kernel is continuous; therefore, $\overline{\bigcup_n \CH_n} = \CH$ and, using A\ref{assumption-Paley-Wiener-space-multivariate} ($f_* \in \CH$), $\|\hat{f}_n-f_*\|_\CH \convergealmostsurely 0$, as $n\to \infty$, and
\begin{equation*}
    \hat{\kappa}_n \doteq \|\hat{f}_{n}\|_{\mathcal{H}}^2 = z_*\tr K_n^{-1} z_*\convergealmostsurely \|f_*\|_{\mathcal{H}}^2 = \kappa_*, \qquad\text{as}\;\, n \to \infty,
\end{equation*}
by the continuous mapping theorem, as $\|\cdot\|_{\mathcal{H}}$ is continuous, where $K_n$ is the kernel matrix for the first $n$ inputs and $z_*$ is the vector of noiseless outputs, $z_* \doteq (f_*(x_1), \dots, f_*(x_n))\tr$.}
\end{remark}
\smallskip

As $\mathcal{H}_n$ is a {\em closed} subspace of $\mathcal{H}$, since it is finite dimensional,  from the Hilbert projection theorem \cite{rudin1987real}, we know that any $f \in \CH$ can be {\em uniquely} decomposed as
$$f = f_{\parallel } + f_{\bot},$$
where $f_{\parallel} \in \mathcal{H}_n$ and $f_{\bot} \in \mathcal{H}_n^{\bot}$, the orthogonal complement of $\CH_n$. Interestingly, for $f' \in \mathcal{C}^*_n$, $f'_{\parallel}$ is the min-norm interpolant:\!\!\!\!
\smallskip
\begin{lemma}
\label{parallel-perpendicular-lemma}
{\em Assuming A\ref{assumption-iid-multivariate}, A\ref{assumption-positive-input-density}, A\ref{assumption-Paley-Wiener-space-multivariate} and that $y_k = f_*(x_k)$ holds for $k \in [n]$, for every $f' \in \mathcal{C}^*_n$, it holds true that $f'_{\parallel} = \hat{f}_n$.}
\end{lemma}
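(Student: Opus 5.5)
The idea is that membership in $\mathcal{C}^*_n$ forces the interpolation constraints, and these constraints see only the component of $f'$ lying in $\CH_n$. The norm bound in $\mathcal{C}^*_n$ is not needed at all; the claim holds for every interpolant of $\CZ_n$.

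Fix $f' \in \mathcal{C}^*_n$ and decompose $f' = f'_{\parallel} + f'_{\bot}$ with $f'_{\parallel} \in \CH_n$ and $f'_{\bot} \in \CH_n^{\bot}$. First, I will show that $f'_{\bot}$ vanishes at every sample input. Since $k(\cdot,x_i) \in \CH_n$ for each $i \in [n]$, the reproducing property gives
$$f'_{\bot}(x_i) = \langle k(\cdot,x_i), f'_{\bot}\rangle_{\CH} = 0.$$
Hence $f'_{\parallel}(x_i) = f'(x_i) = y_i$ for all $i$. So $f'_{\parallel}$ is an element of $\CH_n$ that interpolates $\CZ_n$.

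Second, I will show that there is only one interpolant of $\CZ_n$ inside $\CH_n$. Write $f'_{\parallel} = \sum_k \beta_k k(\cdot,x_k)$. Its values at the inputs form the vector $K\beta$. The interpolation condition therefore reads $K\beta = y$.

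The inputs are a.s.\ distinct by A\ref{assumption-positive-input-density}, because the input distribution is absolutely continuous. The Paley--Wiener kernel is strictly positive definite, so $K$ is a.s.\ invertible. It follows that $\beta = K^{-1}y = \hat{\alpha}$, and by \eqref{min-norm-interpolant-formula} this gives $f'_{\parallel} = \hat{f}_n$.

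An alternative route to the second step is purely geometric. Both $f'_{\parallel}$ and $\hat{f}_n$ lie in $\CH_n$ and interpolate $\CZ_n$. Their difference $g$ then lies in $\CH_n$ and vanishes at every $x_i$, which means $g \perp k(\cdot,x_i)$ for all $i$. Hence $g \in \CH_n \cap \CH_n^{\bot} = \{0\}$. This version does not even require invertibility of $K$, although the formula for $\hat{f}_n$ does.

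Neither step has a real obstacle. The only delicate point is that the statement holds almost surely, through the distinctness of the inputs. I would state this explicitly, as the paper does elsewhere.
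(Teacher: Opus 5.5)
Your proof is correct and follows the paper's argument essentially step for step. The reproducing property shows that $f'_{\bot}$ vanishes at the sample inputs, so $f'_{\parallel}\in\CH_n$ interpolates $\CZ_n$, and uniqueness then follows from the a.s.\ invertibility of $K$, giving $f'_{\parallel}=\hat{f}_n$. Your geometric alternative, $f'_{\parallel}-\hat{f}_n\in\CH_n\cap\CH_n^{\bot}=\{0\}$, is also valid and only a minor variant: it avoids needing invertibility of $K$ for the uniqueness step.
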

\smallskip
The proof of Lemma \ref{parallel-perpendicular-lemma} can be found in Appendix \ref{appendix-parallel-perpendicular-lemma}.

By recalling that $\hat{\kappa}_n \defeq \|\hat{f}_n\|_{\CH}^2,$ from Lemma \ref{parallel-perpendicular-lemma}, we get
\begin{equation*}
\begin{aligned}
\hat{\kappa}_n &\,=\, \|\hat{f}_n\|^2_\CH \leq\, \|f'\|^2_{\CH} \,=\, \|f'_{\parallel} + f'_{\bot}\|^2_{\CH} \\
&\,=\, \|f'_{\parallel }\|^2_{\CH} + \, \|f'_{\bot}\|^2_{\CH} \,= \,\|\hat{f}_n\|^2_{\CH} +\, \|f'_{\bot}\|^2_{\CH} \,\leq\, \kappa^*_n,
\end{aligned}
\end{equation*}
for all $f' \in \CC^*_n$; from which, after rearrangement, we get
$$\|f'_{\bot}\|_{\CH} \leq \sqrt{\kappa^*_n - \hat{\kappa}_n}.$$
Using this, we can formulate a key lemma for consistency:

\smallskip
\begin{lemma}
\label{consistency-noise-free-known-density-2}
{\em Assuming A\ref{assumption-iid-multivariate}, A\ref{assumption-positive-input-density}, A\ref{assumption-Paley-Wiener-space-multivariate} and that $y_k = f_*(x_k)$ holds for $k \in [n]$. Then, for any sample size $n$, we have
$$\sup_{f \in \mathcal{C}^*_n}\!\|\hat{f}_n-f\|_\CH \, \leq \,\sqrt{\kappa^*_n-\hat{\kappa}_n},\vspace{-1mm}$$
where $\kappa^*_n \defeq \max \{ \kappa_*, \kappa_n \}$ and $\hat{\kappa}_n \defeq \|\hat{f}_n\|_{\CH}^2.$}
\end{lemma}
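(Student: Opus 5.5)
The argument is a direct consequence of the orthogonal decomposition $f = f_{\parallel} + f_{\bot}$ with respect to $\CH_n$ together with Lemma \ref{parallel-perpendicular-lemma}. First I will check that the supremum is taken over a nonempty set: since $\|\hat{f}_n\|^2_\CH \leq \kappa_* \leq \kappa^*_n$ and $\hat{f}_n \interpolates \CZ_n$, we have $\hat{f}_n \in \CC^*_n$. Consequently $\kappa^*_n - \hat{\kappa}_n \geq 0$, so the square root on the right-hand side is well defined.

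Next, fix an arbitrary $f \in \CC^*_n$ and decompose it as $f = f_{\parallel} + f_{\bot}$, with $f_{\parallel} \in \CH_n$ and $f_{\bot} \in \CH_n^{\bot}$. By Lemma \ref{parallel-perpendicular-lemma}, $f_{\parallel} = \hat{f}_n$, and therefore $\hat{f}_n - f = -f_{\bot}$. This gives $\|\hat{f}_n - f\|_\CH = \|f_{\bot}\|_\CH$.

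To bound this quantity, I will use the Pythagorean identity already displayed before the statement. Because $\hat{f}_n \in \CH_n$ and $f_{\bot} \perp \CH_n$,
\[
\|f\|^2_\CH = \|\hat{f}_n\|^2_\CH + \|f_{\bot}\|^2_\CH .
\]
Membership $f \in \CC^*_n$ gives $\|f\|^2_\CH \leq \kappa^*_n$, hence $\|f_{\bot}\|^2_\CH \leq \kappa^*_n - \hat{\kappa}_n$. This bound is uniform in $f$, so taking the supremum over $f \in \CC^*_n$ yields the claim.

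The only nontrivial ingredient is Lemma \ref{parallel-perpendicular-lemma}, which is assumed to be available. If it had to be justified here, I would argue as follows. Since $f \interpolates \CZ_n$ and $\hat{f}_n \interpolates \CZ_n$, the reproducing property gives $\langle f - \hat{f}_n, k(\cdot,x_i)\rangle_\CH = 0$ for all $i \in [n]$. Thus $f - \hat{f}_n \in \CH_n^{\bot}$, and because $\hat{f}_n \in \CH_n$, uniqueness of the orthogonal decomposition forces $f_{\parallel} = \hat{f}_n$. No further obstacle is expected.
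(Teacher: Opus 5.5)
Your proposal is correct and matches the paper's proof: use Lemma \ref{parallel-perpendicular-lemma} to get $\|\hat{f}_n - f\|_\CH = \|f_{\bot}\|_\CH$, then apply the Pythagorean identity to bound $\|f_{\bot}\|_\CH^2 \leq \kappa^*_n - \hat{\kappa}_n$. Your extra nonemptiness check is sound, and so is your reproducing-property justification of Lemma \ref{parallel-perpendicular-lemma}, which, unlike the paper's argument, does not need the Gram matrix to be invertible.
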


\smallskip

\begin{proof}
For an arbitrary $f \in \mathcal{C}^*_n$, using Lemma \ref{parallel-perpendicular-lemma}, we have
\begin{equation*}
\begin{aligned}
\|\hat{f}_n-f\|_{\CH}&= \|\hat{f}_n - (f_{\parallel } + f_{\bot})\|_{\CH}\\
&=\|\hat{f}_n  - \hat{f}_n - f_{\bot}\|_{\CH}\\
&=  \norm{f_{\bot}}_{\CH} \leq \sqrt{\kappa^*_n - \hat{\kappa}_n}.\\[-4mm]
\end{aligned}
\end{equation*}
\end{proof}
Now, we are ready to state one of our main contributions: 
\smallskip
\begin{theorem}
\label{consistency-noisefree-known-density}
{\em Assuming A\ref{assumption-iid-multivariate}, A\ref{assumption-positive-input-density}, A\ref{assumption-Paley-Wiener-space-multivariate} and that $y_k = f_*(x_k)$ holds for $k \in [n]$. Then, as $n \to \infty$, we have
$$
\sup_{f\in\mathcal{C}_n}\!\|f - f_*\|_{\mathcal{H}}\,\convergealmostsurely\;0.
$$
}
\end{theorem}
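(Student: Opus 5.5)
The plan is to pass to the enlarged regions $\CC^*_n \supseteq \CC_n$ and control every element of $\CC^*_n$ through the minimum-norm interpolant $\hat{f}_n$. For any $f \in \CC_n \subseteq \CC^*_n$, the triangle inequality gives
\begin{equation*}
\|f - f_*\|_\CH \,\leq\, \|f - \hat{f}_n\|_\CH + \|\hat{f}_n - f_*\|_\CH.
\end{equation*}
Taking the supremum over $f \in \CC_n$ and using Lemma \ref{consistency-noise-free-known-density-2} for the first term yields the deterministic bound
\begin{equation*}
\sup_{f\in\CC_n}\|f - f_*\|_\CH \,\leq\, \sqrt{\kappa^*_n - \hat{\kappa}_n} + \|\hat{f}_n - f_*\|_\CH .
\end{equation*}
If $\CC_n$ is empty, the supremum is vacuous (or $-\infty$), so the bound still holds trivially. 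It therefore suffices to show that both terms on the right tend to zero almost surely.

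For the second term, we invoke Remark \ref{remark-min-interp-norm-conv}. In the noise-free case $\hat{f}_n$ is the orthogonal projection of $f_*$ onto $\CH_n$. The inputs are a.s.\ dense by A\ref{assumption-iid-multivariate}--A\ref{assumption-positive-input-density}, so $\bigcup_n \CH_n$ is dense in $\CH$. Since the subspaces $\CH_n$ are nested, the projections converge, giving $\|\hat{f}_n - f_*\|_\CH \convergealmostsurely 0$ and $\hat{\kappa}_n \convergealmostsurely \kappa_*$.

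For the first term, recall $\kappa^*_n = \max\{\kappa_*,\kappa_n\}$. By Lemma \ref{norm-consistency}, $\kappa_n \convergealmostsurely \kappa_*$. The map $t \mapsto \max\{\kappa_*, t\}$ is continuous, so $\kappa^*_n \convergealmostsurely \kappa_*$. Hence $\kappa^*_n - \hat{\kappa}_n \convergealmostsurely 0$. This difference is nonnegative because $\hat{\kappa}_n \leq \kappa_*$, and the square root is continuous at $0$, so the first term also vanishes a.s.

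The almost sure events from the SLLN (Lemma \ref{norm-consistency}) and from the density argument (Remark \ref{remark-min-interp-norm-conv}) are each of probability one. Their intersection is therefore a probability-one event, on which both terms go to zero simultaneously; this completes the argument. The only step with real content is the convergence $\|\hat{f}_n - f_*\|_\CH \to 0$, and that is already supplied by the Remark. The main point to check there is that a.s.\ density of the sample points, combined with continuity of the functions in $\CH$, gives $(\bigcup_n\CH_n)^\perp=\{0\}$. The rest is bookkeeping with continuous mappings.
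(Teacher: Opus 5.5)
Your proof is correct and follows essentially the paper's argument: you pass to $\CC^*_n$, apply Lemma~\ref{consistency-noise-free-known-density-2} with the triangle inequality through $\hat{f}_n$, and conclude via Lemma~\ref{norm-consistency} and Remark~\ref{remark-min-interp-norm-conv}. The only cosmetic difference is in the term $\|\hat{f}_n - f_*\|_\CH$: the paper uses $f_* \in \CC^*_n$ to bound it by $\sqrt{\kappa^*_n-\hat{\kappa}_n}$ as well, giving the single radius $2\sqrt{\kappa^*_n-\hat{\kappa}_n}$, whereas you take its convergence directly from the Remark.
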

\begin{proof}
From the constructions \eqref{eq:absreg}, \eqref{eq:absstarreg} and Lemma \ref{consistency-noise-free-known-density-2}, we have $\CC_n \subseteq \CC^*_n \subseteq \bar{B}(f_*,2\sqrt{\kappa^*_n-\hat{\kappa}_n})$, where $\bar{B}(f,r)$ denotes a closed norm ball in $\CH$ with center $f \in \CH$ and radius $r$. The factor $2$ comes from the triangle inequality (through $\hat{f}_n$), as $f_* \in \CC^*_n$, too. We also have $2\sqrt{\kappa^*_n-\hat{\kappa}_n} \convergealmostsurely 0$, 
from Lemma \ref{norm-consistency} and 
the fact that $\hat{\kappa}_n \convergealmostsurely \kappa_*$,
as discussed in Remark \ref{remark-min-interp-norm-conv}.
\end{proof}
\vspace{-2mm}

Combining Theorems \ref{theorem-consistency-abstract-endpoint-based} and \ref{consistency-noisefree-known-density}, we can conclude:
\smallskip
\begin{corollary}
{\em Under the assumptions of Theorem \ref{consistency-noisefree-known-density}, we have
\begin{equation*}
\max\hspace{-0.5mm}\big\{\hspace{0.5mm} \|L_n - f_*\|_\infty, \|\hspace{0.3mm} U_n - f_*\|_\infty \big\} \convergealmostsurely\, 0, \quad\text{as}\quad n\to\infty.
\end{equation*}}
\end{corollary}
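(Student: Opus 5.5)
The corollary follows by feeding the conclusion of Theorem \ref{consistency-noisefree-known-density} into Theorem \ref{theorem-consistency-abstract-endpoint-based}. The plan is therefore to verify the hypotheses of Theorem \ref{theorem-consistency-abstract-endpoint-based} for the Paley--Wiener setting and the regions $\{\CC_n\}$ defined in \eqref{eq:absreg}.

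\emph{Bounded kernel.} The Paley--Wiener kernel satisfies $|k(u,v)| \leq k(u,u) = \pi^{-d}\prod_{k=1}^d \eta_k$ for all $u,v \in \mathbb{R}^d$, since $|\sin(\eta t)/t| \leq \eta$. Hence it is bounded, and $\CH$ is an RKHS of $\mathbb{R}^d \to \mathbb{R}$ functions with $f_* \in \CH$ by A\ref{assumption-Paley-Wiener-space-multivariate}.

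\emph{Convergence of the regions.} Theorem \ref{consistency-noisefree-known-density} gives directly $\sup_{f\in\CC_n}\|f-f_*\|_\CH \convergealmostsurely 0$.

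\emph{Nonemptiness.} This is the only point needing care, since Theorem \ref{theorem-consistency-abstract-endpoint-based} requires $\CC_n \neq \emptyset$. The minimum-norm interpolant $\hat{f}_n$ lies in $\CC_n$ exactly when $\hat{\kappa}_n \leq \kappa_n$, which may fail for finitely many $n$. By Lemma \ref{norm-consistency}, $\kappa_n \convergealmostsurely \kappa_*$, and by Remark \ref{remark-min-interp-norm-conv}, $\hat{\kappa}_n \convergealmostsurely \kappa_*$ with $\hat{\kappa}_n \leq \kappa_*$. These limits alone do not force $\kappa_n \geq \hat{\kappa}_n$ eventually. I would handle this in one of two ways.

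The first option is to note that the proof of Theorem \ref{theorem-consistency-abstract-endpoint-based} only uses nonemptiness to make $L_n, U_n$ finite. So it suffices to have nonemptiness for all large $n$ almost surely, or to adopt a convention for empty bands.

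The second, more robust option is to apply Theorem \ref{theorem-consistency-abstract-endpoint-based} to the enlarged regions $\CC^*_n$, which always contain $f_*$ and are therefore nonempty. Since $\CC_n \subseteq \CC^*_n$, the corresponding envelopes satisfy $L^*_n \leq L_n \leq U_n \leq U^*_n$ whenever $\CC_n \neq \emptyset$. The sup-distances from $f_*$ are then dominated by those of the starred band. The proof of Theorem \ref{consistency-noisefree-known-density} already shows $\CC^*_n \subseteq \bar{B}(f_*,2\sqrt{\kappa^*_n-\hat{\kappa}_n})$ with radius tending to $0$ almost surely.

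\emph{Conclusion.} With the hypotheses verified, Theorem \ref{theorem-consistency-abstract-endpoint-based} yields
\[
\max\{\|L_n-f_*\|_\infty,\|U_n-f_*\|_\infty\} \convergealmostsurely 0 .
\]
The underlying mechanism is the reproducing-property estimate $|f(x)-f_*(x)| \leq \sqrt{k(x,x)}\,\|f-f_*\|_\CH$, which holds uniformly in $x$.

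The main obstacle is the nonemptiness and empty-band convention; the second option above resolves it via the sandwich argument.
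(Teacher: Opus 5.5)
Your main line is exactly the paper's one-line proof: the Paley--Wiener kernel is bounded since $|k(u,v)|\le\pi^{-d}\prod_k\eta_k$, and Theorem~\ref{consistency-noisefree-known-density} feeds into Theorem~\ref{theorem-consistency-abstract-endpoint-based}. You are also right that the paper never checks the nonemptiness hypothesis of Theorem~\ref{theorem-consistency-abstract-endpoint-based}. But your ``second option'' does not close this gap. The sandwich $L^*_n\le L_n\le U_n\le U^*_n$ only holds when $\CC_n\neq\emptyset$, as you yourself note. At an index with $\CC_n=\emptyset$ (equivalently $\kappa_n<\hat{\kappa}_n$) you have $L_n\equiv\inf\emptyset=+\infty$ and $U_n\equiv-\infty$; the paper returns $B_n(x_0)=\emptyset$. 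This lies outside the starred band, and the quantity in the corollary equals $+\infty$. So enlarging to $\CC^*_n$ gives nothing beyond your first option: both yield convergence only along the random indices with $\CC_n\neq\emptyset$. The statement as written still requires $\kappa_n\ge\hat{\kappa}_n$ for all large $n$, almost surely.

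That requirement is not a formality, and nothing in the paper supplies it; the paper proves $\hat{\kappa}_n\convergealmostsurely\kappa_*$ without any rate. The quantity $\kappa_{n,1}$ averages the bounded i.i.d.\ variables $f_*^2(x_k)/h_*(x_k)$. If their variance is positive, the law of the iterated logarithm gives $\kappa_{n,1}-\kappa_*\le -c\sqrt{\log\log n/n}$ infinitely often, which dominates $\kappa_{n,2}\propto n^{-1/2}$. Hence $\kappa_n<\kappa_*$ infinitely often a.s. If moreover $\kappa_*-\hat{\kappa}_n=\|f_*-\hat{f}_n\|_\CH^2$ is $o(\sqrt{\log\log n/n})$, then $\CC_n=\emptyset$ infinitely often.

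To get a correct result, either state it along the indices where $\CC_n\neq\emptyset$, or replace $\kappa_n$ by $\bar{\kappa}_n\defeq\max\{\kappa_n,\hat{\kappa}_n\}$. This choice has three properties:
\begin{itemize}
\item coverage is kept, because $\bar{\kappa}_n\ge\kappa_n$;
\item the modified region always contains $\hat{f}_n$, so it is nonempty;
\item $\hat{\kappa}_n\le\kappa_*$ gives $\bar{\kappa}_n\le\kappa^*_n$, so the modified regions lie in $\CC^*_n$.
\end{itemize}
The proof of Theorem~\ref{consistency-noisefree-known-density} and Theorem~\ref{theorem-consistency-abstract-endpoint-based} then go through unchanged. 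This is what the positive part in the outer-ball radius $r_n=\sqrt{|\kappa_n-\hat{\kappa}_n|_+}$ does implicitly: it collapses $\mathcal{O}_n$ to $\{\hat{f}_n\}$ rather than to the empty set.
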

\smallskip
Hence, the MiNCE bands are {\em strongly uniformly consistent.}

Finally, we note that Lemmas \ref {lemma-norm-estimation-known-density-noisefree} and \ref{consistency-noise-free-known-density-2} can be exploited to build a norm-ball {\em outer approximation} of the region, that is
\begin{align}
\label{eq:outer-ball}
\CC_n \subseteq \mathcal{O}_n &\doteq \big\{\, f \in \CH : \| \hat{f}_n - f\|_\CH \leq r_n \hspace{0.3mm}\big\},
\end{align}
with radius $r_n \doteq \sqrt{|\kappa_n - \hat{\kappa}_n|_+}$, where $\kappa_n$ is the stochastic bound for $\|f_*\|_\CH^2$ constructed in Lemma \ref{lemma-norm-estimation-known-density-noisefree} and $\hat{\kappa}_n = \|\hat{f}_n\|_\CH^2$.

\smallskip
\begin{corollary}
\label{thn:outer-ball}
{\em Under the assumptions of Theorem \ref{consistency-noisefree-known-density}, we have
\begin{equation*}
\forall n:\mathbb{P}(f_* \in \mathcal{O}_n) \geq 1-\alpha,\quad\text{and}\quad
\sup_{f\in\mathcal{O}_n}\!\|f - f_*\|_{\mathcal{H}}\econvergealmostsurely\,0.\vspace{-2mm}
\end{equation*}}
\end{corollary}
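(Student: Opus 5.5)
The proof has two parts: coverage of $\mathcal{O}_n$ at every $n$, and almost sure shrinkage of $\mathcal{O}_n$ around $f_*$.

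\emph{Coverage.} It suffices to show the inclusion $\CC_n \subseteq \mathcal{O}_n$ whenever $\norm{f_*}_\CH^2 \leq \kappa_n$. On this event we have $\kappa^*_n = \max\{\kappa_*,\kappa_n\} = \kappa_n$, so $\CC^*_n = \CC_n$. Recall that $f_* \in \CC^*_n$ and $\hat{\kappa}_n \leq \kappa_*$. Then $\kappa_n - \hat{\kappa}_n \geq 0$, and hence $\sqrt{\kappa^*_n - \hat{\kappa}_n} = \sqrt{|\kappa_n-\hat{\kappa}_n|_+} = r_n$. Lemma~\ref{consistency-noise-free-known-density-2} now gives $\|\hat{f}_n - f_*\|_\CH \leq r_n$, that is, $f_* \in \mathcal{O}_n$. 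Consequently
\[
\mathbb{P}(f_*\in\mathcal{O}_n) \;\geq\; \mathbb{P}\big(\norm{f_*}_\CH^2\leq\kappa_n\big) \;\geq\; 1-\alpha,
\]
by Lemma~\ref{lemma-norm-estimation-known-density-noisefree}. The inclusion $\CC_n\subseteq\mathcal{O}_n$ in \eqref{eq:outer-ball} holds unconditionally. If $\CC_n$ is nonempty, any $f\in\CC_n$ has $f_\parallel=\hat{f}_n$, exactly as in Lemma~\ref{parallel-perpendicular-lemma}, whose argument uses only $f \interpolates \CZ_n$. Hence $\|f-\hat{f}_n\|^2_\CH = \|f\|_\CH^2-\hat{\kappa}_n \leq \kappa_n-\hat{\kappa}_n$.

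\emph{Consistency.} For any $f\in\mathcal{O}_n$, the triangle inequality gives
\[
\|f-f_*\|_\CH \;\leq\; r_n + \|\hat{f}_n - f_*\|_\CH ,
\]
and the right-hand side does not depend on $f$, so it bounds the supremum. By Lemma~\ref{norm-consistency}, $\kappa_n\convergealmostsurely\kappa_*$. By Remark~\ref{remark-min-interp-norm-conv}, $\hat{\kappa}_n\convergealmostsurely\kappa_*$ and $\|\hat{f}_n-f_*\|_\CH\convergealmostsurely 0$. The map $t\mapsto\sqrt{|t|_+}$ is continuous, so $r_n\convergealmostsurely 0$ by the continuous mapping theorem. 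Intersecting the probability-one events, both terms vanish a.s., which proves the second claim.

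The main step requiring care is the first part. Lemma~\ref{consistency-noise-free-known-density-2} is stated with $\kappa^*_n$ rather than $\kappa_n$, so one must verify that on the coverage event the two radii coincide and the positive part is harmless. The consistency part is a routine combination of the earlier limits.
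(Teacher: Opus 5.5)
Your proof is correct and takes essentially the same route as the paper. Coverage comes from the norm-bound event of Lemma~\ref{lemma-norm-estimation-known-density-noisefree}: you show $f_*\in\mathcal{O}_n$ on that event directly via Lemma~\ref{consistency-noise-free-known-density-2} with $\kappa^*_n=\kappa_n$, whereas the paper passes through $f_*\in\CC_n\subseteq\mathcal{O}_n$. Consistency comes from the triangle inequality through $\hat{f}_n$, with $r_n\to 0$ and $\|\hat{f}_n-f_*\|_\CH\to 0$ almost surely (the paper bounds both terms by $\sqrt{\kappa^*_n-\hat{\kappa}_n}$ rather than invoking Remark~\ref{remark-min-interp-norm-conv} directly, which is only a cosmetic difference).
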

\smallskip
\begin{proof}
The first part follows form Lemma \ref{lemma-norm-estimation-known-density-noisefree} and
\eqref{eq:outer-ball}, and the second from the same argument that we used in the proof of Theorem \ref{consistency-noisefree-known-density}, i.e., $2\sqrt{\kappa^*_n-\hat{\kappa}_n} \convergealmostsurely 0 \implies r_n \convergealmostsurely 0,$ as $\kappa_n \convergealmostsurely \kappa^*$, by Lemma \ref{norm-consistency}, and thus, $\kappa^*_n\convergealmostsurely \kappa_*$, as well.
\end{proof}

\section{Outputs with Measurement Noise}
\label{noisy-outputs-known-density-section}
Now, we relax the assumption that the regression function is perfectly observed at the sample inputs, as the {\em outputs} are {\em corrupted by noise}, that is, $y_k \,=\, f_*(x_k)+\varepsilon_k$. Nevertheless, we will still assume that the distribution of the inputs is a priori known. Our main aim is to show that MiNCE remains {\em strongly uniformly consistent} under these conditions, as well.

\subsection{Norm Estimation from Noisy Outputs}
The first challenge of the noisy case is that the norm-bound construction of Lemma \ref{lemma-norm-estimation-known-density-noisefree} is no longer directly applicable, as the values $\{f_*(x_k)\}$ are not observed. In order to overcome this, we
will apply a method that can construct a confidence ellipsoid for the true outputs at a subset of the {\em sample} inputs. 
Note that this is a  simpler problem than building a confidence band for the entire function; and there are already algorithms for this. In our experiments, we applied the {\em Kernel Gradient Perturbation} (KGP) method \cite{csaji2019distribution, csaji2023improving}, but any method can be used which can guarantee the following assumption:
\smallskip
\begin{assumption}
\label{assumption-ellipsoid-guarantee}
{\em There is $n_0 = n_0(n) \leq n$, such that for all $\beta \in (0,1)$,  
we can build an ellipsoid $\ConfEll \subseteq \mathbb{R}^{n_0}$, based on $\CZ_n$, with}
\begin{equation*}
\mathbb{P}\big(\, (f_*(x_1), \dots, f_*(x_{n_0}))\tr\hspace{-0.5mm} \in \ConfEll\,\big) \, \geq \, 1-\beta.
\end{equation*}
\end{assumption}
\smallskip
Let us denote the center vector and the shape matrix of this confidence ellipsoid by $\hat{z}_n \in \RR^{n_0}$ and $P_n \in \RR^{n_0 \times n_0}$, that is
\begin{equation}
\label{eq:conf-ell-set}
\ConfEll = \big\{ z \in \RR^{n_0}: (z-\hat{z}_n)\tr P_n\hspace{0.3mm}(z-\hat{z}_n) \leq 1\big\},
\end{equation}
where, w.l.o.g., the radius is assumed to be $1$ (rescale $P$).

\smallskip
\begin{remark}
{\em A set of possible assumptions under which such ellipsoids can be constructed, used by the KGP method, is: the noises $\{\varepsilon_k\}$ have zero mean, they are independent of the inputs, $\{x_k\}$, moreover, 
$\varepsilon \defeq (\varepsilon_1, \dots, \varepsilon_n)\tr$ is distributionally invariant 
with respect to a known compact\vspace{-0.6mm} matrix group $\mathcal{G}$. 

This concept is defined as: an $\mathbb{R}^n$-valued random vector $\varepsilon$ is distributionally invariant w.r.t.\ a compact group of transformations, $(\CG, \circ)$, where ``$\circ$'' denotes the function composition and each $G \in \CG$ maps $\RR^n$ to itself, if for all $G \in \CG$, random vectors $\varepsilon$ and $G(\varepsilon)$ have the same (joint) distribution.

Two typical examples having this property are the following: if $\{ \varepsilon_i \}$ are {exchangeable}, then we can use the finite group of permutations on the noise vector. If $\{ \varepsilon_i \}$ are independent and {symmetric} about zero, then we can apply the finite group of diagonal matrices with $\pm 1$ entries, i.e., sign-changes \cite{csaji2019distribution}.

}
\end{remark}
\smallskip

Given such a confidence ellipsoid, we can extend the results of Lemma \ref{lemma-norm-estimation-known-density-noisefree} to the noisy case by computing the worst-case bound over all output vectors within the ellipsoid. By using the notation $z \defeq (z_1, \dots, z_{n_0})\tr,$ the resulting problem is:
\begin{equation}
\label{noisy-norm-max-rewritten}
\begin{split}
\mbox{maximize} &\quad \frac{1}{n_0}\, \sum_{k=1}^{n_0} \left(\frac{{z_k}^2}{h_*(x_k)} \land \varrho\right) + \varrho \, \sqrt{\frac{\ln (1/\alpha)}{2n_0}} \\[1mm]
\mbox{subject to} &\quad z \in \ConfEll\\[1mm]
\end{split}
\end{equation}
This problem is not convex, and because of the cap with $\varrho$ ($\land$ denotes ``min''), we cannot rely on an equivalent convex dual problem, as in the case of the uncapped variant \cite{csaji2023improving,horvath2023nonparametric}. However, we do not need to solve \eqref{noisy-norm-max-rewritten} exactly, an upper bound is enough, which can be easily given, see Remark \ref{remark-tau-computation}.
Let us denote the {\em optimal value} of \eqref{noisy-norm-max-rewritten} by $\tau_0$, then
\smallskip
\begin{lemma}
\label{lemma-norm-estimation-known-density-noisy}
{\em Assume that A\ref{assumption-iid-multivariate}, A\ref{assumption-positive-input-density}, 
A\ref{assumption-Paley-Wiener-space-multivariate} and A\ref{assumption-ellipsoid-guarantee} hold. Then, for any $\alpha,\beta \in (0,1)$ risk probabilities, we have
$$\mathbb{P}\big((f_*(x_1), \dots, f_*(x_{n_0}))\tr\hspace{-0.5mm} \in \ConfEll\hspace{0.1mm} \land\hspace{0.1mm} \norm{f_*}_{\CH}^2 \leq \tau_0 \hspace{0.1mm}\big) \geq 1-\alpha-\beta.
$$}
\end{lemma}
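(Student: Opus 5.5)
We prove the statement by a union bound over two ``bad'' events, one controlled by A\ref{assumption-ellipsoid-guarantee} and one by a Hoeffding argument in the style of Lemma \ref{lemma-norm-estimation-known-density-noisefree}, applied to the first $n_0$ data points. Let $z_* \defeq (f_*(x_1), \dots, f_*(x_{n_0}))\tr$. Define the events
\[
E_1 \defeq \{z_* \in \ConfEll\}, \qquad E_2 \defeq \Big\{\norm{f_*}_{\CH}^2 \leq \frac{1}{n_0}\sum_{k=1}^{n_0} \frac{f_*^2(x_k)}{h_*(x_k)} + \varrho\sqrt{\frac{\ln(1/\alpha)}{2n_0}}\Big\}.
\]
By A\ref{assumption-ellipsoid-guarantee}, $\mathbb{P}(E_1^c) \leq \beta$.

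For $E_2$, use that the inputs $x_1,\dots,x_{n_0}$ are i.i.d.\ with density $h_*$ by A\ref{assumption-iid-multivariate} and A\ref{assumption-positive-input-density}. The variables $W_k \defeq f_*^2(x_k)/h_*(x_k)$ are then i.i.d. By A\ref{assumption-Paley-Wiener-space-multivariate} they take values in $[0,\varrho]$. As in \eqref{noise-free-density-convergence}, their mean is $\norm{f_*}_{\CH}^2$. The one-sided Hoeffding inequality gives
\[
\mathbb{P}\Big(\frac{1}{n_0}\sum_{k=1}^{n_0} W_k \leq \norm{f_*}_\CH^2 - t\Big) \leq \exp(-2n_0t^2/\varrho^2).
\]
Choosing $t = \varrho\sqrt{\ln(1/\alpha)/(2n_0)}$ yields $\mathbb{P}(E_2^c)\leq\alpha$. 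This is exactly the argument of Lemma \ref{lemma-norm-estimation-known-density-noisefree}, with $n$ replaced by $n_0$ and the noiseless outputs written as $f_*(x_k)$.

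We need one technical point here: $n_0 = n_0(n)$ may depend on the data. For the argument above, $n_0$ should be deterministic, i.e., a function of $n$ only, or the subset should otherwise be chosen independently of the inputs. The notation $n_0(n)$ suggests it is deterministic, and I would state this explicitly.

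On $E_1\cap E_2$ the claim follows. Since $W_k \leq \varrho$, we have $W_k = W_k \land \varrho = (z_{*,k}^2/h_*(x_k))\land\varrho$. Hence the right-hand side in $E_2$ equals the objective of \eqref{noisy-norm-max-rewritten} evaluated at $z = z_*$. On $E_1$, $z_*$ is feasible for that problem, so the objective there is at most the optimal value $\tau_0$. This gives $\norm{f_*}_\CH^2 \leq \tau_0$.

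Combining the two bounds,
\[
\mathbb{P}(E_1\cap E_2) \geq 1-\mathbb{P}(E_1^c)-\mathbb{P}(E_2^c)\geq 1-\alpha-\beta,
\]
and the event of the lemma contains $E_1\cap E_2$. The only delicate points are that $\ConfEll$ is built from the same data, which is harmless because the union bound needs no independence, and the deterministic-$n_0$ issue noted above. I expect no other obstacle.
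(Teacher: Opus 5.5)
Your proof is correct and follows the same route as the paper: a union bound combining A\ref{assumption-ellipsoid-guarantee} with the Hoeffding bound of Lemma \ref{lemma-norm-estimation-known-density-noisefree} on the first $n_0$ points. It also uses, as the paper does, that the cap is inactive at $z_*$ by A\ref{assumption-Paley-Wiener-space-multivariate}, so the objective of \eqref{noisy-norm-max-rewritten} at the feasible point $z_*$ equals $\kappa_{n_0}$ and is dominated by $\tau_0$. Your caveat about $n_0$ is already covered by A\ref{assumption-ellipsoid-guarantee}, which posits $n_0 = n_0(n)$ deterministically.
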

\smallskip
\begin{proof}
This directly follows from the combination (i.e., union bound) of A\ref{assumption-ellipsoid-guarantee} and Lemma \ref{lemma-norm-estimation-known-density-noisefree}, using that $\tau_0$ solves \eqref{noisy-norm-max-rewritten} and that the cap is {\em inactive} at the true outputs by A\ref{assumption-Paley-Wiener-space-multivariate}; so the objective of \eqref{noisy-norm-max-rewritten} at $z_*$ is exactly the bound $\kappa_{n_0}$ of Lemma \ref{lemma-norm-estimation-known-density-noisefree}.
\end{proof}
\vspace{-1mm}

Recall that $\beta$ comes from the uncertainty of the ellipsoids $\ConfEll$, and $\alpha$ comes from the uncertainty of the norm estimation.

\subsection{Confidence Bands and Interval Endpoints}
Now, we can define our {\em abstract} confidence sets as
\begin{equation*}
\begin{aligned}
\mathcal{D}_{n}\hspace{-0.3mm} &\defeq\hspace{-0.3mm} \{ f \in \CH \mid \exists\, z \in \ConfEll\hspace{-0.3mm}:\hspace{-0.3mm} f \interpolates \{(x_k,z_k)\}_{k=1}^{n_0} \land \norm{f}_\mathcal{H}^2 \leq \tau_{0} \}.
\end{aligned}
\end{equation*}
This construction guarantees that $\mathbb{P}(f_* \in \mathcal{D}_{n}) \geq 1-\alpha-\beta$, since with probability at least $1-\alpha-\beta$ we have that the true outputs $\{f_*(x_i)\}$,  for $i \in [n_0]$, are in the ellipsoid $\ConfEll$ and $\norm{f_*}_{\CH}^2 \leq \tau_0$, by Lemma \ref{lemma-norm-estimation-known-density-noisy}, moreover, as $f_*$ interpolates its own outputs, $f_* \in \mathcal{D}_{n}$. However, as this direct construction is computationally challenging, an algorithm is needed to efficiently generate the confidence set for an arbitrary input.

Similarly to the noise-free case, confidence band $\tilde{\mathcal{B}}_n$ can be built by calculating an envelope for the possible outputs:
\begin{equation}
\label{eq:confband-noisy}
\tilde{\mathcal{B}}_n \defeq \big\{ f \in \CH \mid \forall x\in \mathbb{R}^d: \tilde{L}_n(x) \leq f(x) \leq \tilde{U}_n(x) \big\},
\end{equation}
where the {\em interval endpoints} for input $x \in \mathbb{R}^d$ are defined by
\begin{equation*}
\tilde{L}_n(x) \defeq \inf_{f \in \mathcal{D}_{n}}\! f(x),\qquad \text{and}\qquad \tilde{U}_n(x) \defeq \sup_{f \in \mathcal{D}_{n}}\! f(x).
\end{equation*}

Now, we present the construction for the interval endpoints, given query input $x_0 \in \mathbb{R}^d$, with $x_0 \neq x_k$, for $k \in [n_0]$.
\begin{enumerate}
    \item[(i)] First, the extended Gram matrix should be built 
    $$\tilde{K}_0(i+1,j+1) \defeq k(x_i,x_j),$$
    for $i, j = 0,1, \dots, n_0$. Note that the dimension of $\tilde{K}_0$ is $(n_0+1) \times (n_0+1)$, as it also includes query input $x_0$.
\item[(ii)] Similarly to the noise-free case, we solve two convex optimization problems with quadratic constraints to get $\tilde{B}_n(x_0) = (\tilde{L}_n(x_0),\tilde{U}_n(x_0))$, but since now we do not observe the outputs that should be interpolated, we optimize over all possible vectors, $(z_1, \dots, z_{n_0})\tr \in \ConfEll$:    
    \begin{align}
    \mbox{min\,/\,max} &\quad z_{0}\notag \\[0.5mm]
    \mbox{subject to} &\quad (z_0, \dots, z_{n_0})\hspace{0.3mm} {\tilde{K}}_0^{-1} (z_0, \dots, z_{n_0})\tr \leq\, \tau_0\notag\\[1mm]
    \label{noisy-opt-min-max-modified}
    &\quad (z_1, ..., z_{n_0}) \in \ConfEll,
    \end{align}
    where ``min\,/\,max'' again means that the problem has to be solved as a minimization and as a maximization.%
    \item[(iii)] The optimal values, denoted by $z_{\mathrm{min}}$ and $z_{\mathrm{max}}$, are the {\em endpoints} of the {\newpart confidence} interval: $\tilde{L}_n(x_0) \defeq z_{\mathrm{min}}$, and $\tilde{U}_n (x_0) \defeq z_{\mathrm{max}}$. If \eqref{noisy-opt-min-max-modified} is infeasible, e.g., we get an empty KGP ellipsoid, we return $\tilde{B}_n(x_0) = \emptyset$.
\end{enumerate}
\smallskip
The coverage guarantee of the construction is
\smallskip
\begin{theorem}
\label{theorem-reliability-noisy}
{\em Assume that A\ref{assumption-iid-multivariate}, A\ref{assumption-positive-input-density}, 
A\ref{assumption-Paley-Wiener-space-multivariate}, and A\ref{assumption-ellipsoid-guarantee} are satisfied. Let $\alpha, \beta \in (0,1)$ be given risk probabilities.
Then, for any $n \in \mathbb{N}$, the confidence band described above guarantees}
$$\nu(\tilde{B}_n)\, =\, \mathbb{P}(f_* \in \tilde{\CB}_n)\, \geq\, 1-\alpha - \beta.$$
\end{theorem}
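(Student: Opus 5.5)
The plan mirrors the argument used for Theorem \ref{theorem-reliability-noisefree}: reduce coverage of the band to membership of $f_*$ in the abstract region $\mathcal{D}_n$, then invoke Lemma \ref{lemma-norm-estimation-known-density-noisy}. Let $E$ be the event that $(f_*(x_1),\dots,f_*(x_{n_0}))\tr \in \ConfEll$ and $\norm{f_*}_{\CH}^2 \leq \tau_0$. By Lemma \ref{lemma-norm-estimation-known-density-noisy}, which combines A\ref{assumption-ellipsoid-guarantee} and Lemma \ref{lemma-norm-estimation-known-density-noisefree} applied to the first $n_0$ i.i.d.\ samples via a union bound, we have $\mathbb{P}(E) \geq 1-\alpha-\beta$. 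It then suffices to show that $E \subseteq \{f_* \in \tilde{\CB}_n\}$.

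\emph{Step 1: on $E$, $f_* \in \mathcal{D}_n$.} Choose $z \defeq (f_*(x_1),\dots,f_*(x_{n_0}))\tr$. On $E$ we have $z \in \ConfEll$, and trivially $f_* \interpolates \{(x_k,z_k)\}_{k=1}^{n_0}$. Since also $\norm{f_*}_\CH^2 \leq \tau_0$, this $z$ witnesses $f_* \in \mathcal{D}_n$. Note that $f_* \in \CH$ by A\ref{assumption-Paley-Wiener-space-multivariate}.

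\emph{Step 2: $\mathcal{D}_n \subseteq \tilde{\CB}_n$.} Take any $f \in \mathcal{D}_n$ and any $x$. By the definition of $\tilde L_n$ and $\tilde U_n$ as the infimum and supremum over $\mathcal{D}_n$, we get $\tilde L_n(x) \leq f(x) \leq \tilde U_n(x)$, so $f \in \tilde{\CB}_n$ by \eqref{eq:confband-noisy}. Together with Step 1, this gives $\nu(\tilde B_n) = \mathbb{P}(f_*\in\tilde\CB_n) \geq \mathbb{P}(E) \geq 1-\alpha-\beta$.

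\emph{Main obstacle: the computed endpoints must agree with the abstract ones.} The band is actually computed through \eqref{noisy-opt-min-max-modified}, so we must check that its optimal values coincide with $\inf_{\mathcal{D}_n} f(x_0)$ and $\sup_{\mathcal{D}_n} f(x_0)$, or at least bracket them.

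\begin{itemize}
\item For $x_0 \notin \{x_k\}_{k=1}^{n_0}$, the Paley--Wiener kernel is strictly positive definite, so $\tilde K_0$ is a.s.\ invertible.
\item For fixed $(z_0,\dots,z_{n_0})$, the minimal squared norm of an interpolant is $(z_0,\dots,z_{n_0})\tilde K_0^{-1}(z_0,\dots,z_{n_0})\tr$, by Section~\ref{sec:min-norm-int}.
\item Hence a value $z_0$ is attained by some $f\in\mathcal{D}_n$ at $x_0$ if and only if some $z\in\ConfEll$ makes $(z_0,z)$ feasible in \eqref{noisy-opt-min-max-modified}. Only the ``if'' direction, i.e.\ that the optimal values bracket $f(x_0)$ for every $f\in\mathcal D_n$, is needed for coverage.
\item For $x_0 = x_k$ with $k \leq n_0$, the interval is obtained by optimizing $z_k$ over the same constraint set, and the same argument applies.
\end{itemize}

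On $E$, feasibility holds with $z_0 = f_*(x_0)$, so the computed interval is nonempty and contains $f_*(x_0)$ for every $x_0$ simultaneously. This completes the coverage guarantee.
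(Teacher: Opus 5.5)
Your proof is correct and essentially the paper's: the paper takes the same event from Lemma~\ref{lemma-norm-estimation-known-density-noisy} and shows directly that $(f_*(x_0), f_*(x_1),\dots,f_*(x_{n_0}))$ is feasible in \eqref{noisy-opt-min-max-modified} for every $x_0$, which is your closing paragraph, while your Steps 1--2 add the abstract inclusion $f_*\in\mathcal{D}_n\subseteq\tilde{\CB}_n$ (as in Theorem~\ref{theorem-reliability-noisefree}) and your bullet for $x_0=x_k$ covers a case the paper leaves implicit. One small slip: the direction you need (every value $f(x_0)$ with $f\in\mathcal{D}_n$ is feasible, so the optimal values bracket it) is the ``only if'' direction of your stated equivalence, not the ``if'' direction.
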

\begin{proof}
Let us introduce the following event:
$$A \defeq \{ (f_*(x_1), \dots, f_*(x_{n_0}))\tr\hspace{-0.5mm} \in \ConfEll \land \norm{f_*}_\CH^2 \leq \tau_0 \}$$
According to Lemma \ref{lemma-norm-estimation-known-density-noisy}, the event $A$ has probability at least $1-\alpha-\beta$.  Conditioning on $A$, for all $x_0 \in \mathbb{R}^d$ such that $\tilde{K}_0$ is invertible (which holds $\mathbb{P}_{\!\scriptscriptstyle{X}}$-a.s., as the Paley--Wiener kernel is strictly positive definite) we know that there is a $z \in \ConfEll$, namely $z = (f_*(x_1), \dots, f_*(x_{n_0}))\tr$, and $z_0$, namely $z_0 = f_*(x_0)$, such that the {\em minimum-norm} interpolant of $\{(x_0, z_0)\} \cup \{(x_k, z_k)\}_{k=1}^{n_0}$ has a norm square $\leq \tau_0$, since $f_*$ itself is an interpolant of this dataset.
Thus, we have that $z_{\mathrm{min}} \leq z_0 \leq z_{\mathrm{max}}$, for $z_0 = f_*(x_0)$. This property is always guaranteed under event $A$, hence, we {\em simultaneously} have for all input $x_0 \in \mathbb{R}^d$ that $z_{\mathrm{min}}(x_0) \leq f_*(x_0) \leq z_{\mathrm{max}}(x_0)$.
\end{proof}
\vspace*{-6mm}

\subsection{Strong Uniform Consistency}
In this section, we prove that MiNCE is strongly uniformly consistent under measurement noises. Our strategy is again to analyze  a confidence band construction which provides strictly larger confidence bands, since it will be easier to study and its consistency implies the consistency of MiNCE, as well.

Thus, instead of ellipsoid $\ConfEll$, in the proof we work with its induced individual confidence intervals, that is, for $k \in [\hspace{0.3mm}n_0\hspace{0.3mm}]$
\vspace{-0.5mm}
\begin{equation}
\label{noisy-norm-max-intervals}
\begin{aligned}
\nu_{n,k} &\,\defeq \, \min\hspace{-0.3mm} \big\{\hspace{0.3mm} z_k : z \in \ConfEll\hspace{0.3mm}\big\}
= e_k\tr\hat{z}_n - (e_k\tr P_n^{-1} e_k)^{\frac{1}{2}},\\[2mm]
\mu_{n,k} &\,\defeq \, \max\hspace{-0.3mm} \big\{\hspace{0.3mm} z_k : z \in \ConfEll\hspace{0.3mm}\big\} = e_k\tr\hat{z}_n + (e_k\tr P_n^{-1} e_k)^{\frac{1}{2}},%
\end{aligned}
\vspace{1mm}
\end{equation}
where $e_k$ is the $k$-th canonical orthonormal basis vector. 

By construction we have $\ConfEll \subseteq \ConfHyp \doteq \times_{k=1}^{n_0} \: [\nu_{n,k}, \mu_{n,k}]$. Combining this with A\ref{assumption-ellipsoid-guarantee} yields the coverage guarantee
\begin{equation}
\label{sps-intervals-inputs}
\mathbb{P}\big(\hspace{0.3mm} \forall \hspace{0.3mm}k \in [\hspace{0.3mm}n_0\hspace{0.3mm}]: f_*(x_k) \in [\hspace{0.3mm}\nu_{n,k}, \mu_{n,k}\hspace{0.3mm}]\hspace{0.3mm}\big)\, \geq\, 1 - \beta,
\end{equation}
which implies that the confidence intervals $\{[\hspace{0.3mm}\nu_{n,k}, \mu_{n,k}\hspace{0.3mm}]\}$ have {\em simultaneous} coverage for the true outputs, $\{f_*(x_k)\}$.

We also introduce a new norm upper bound $\tau_n$, which is obtained by optimizing over $\ConfHyp$, instead of $\ConfEll$, that is
\begin{equation}
\label{noisy-norm-bound-with-intervals}
\begin{split}
\mbox{maximize} &\quad \frac{1}{n_0}\, \sum_{k=1}^{n_0} \left(\frac{{z_k}^2}{h_*(x_k)} \land \varrho\right) + \varrho \, \sqrt{\frac{\ln (1/\alpha)}{2n_0}} \\[1mm]
\mbox{subject to} &\quad z \in \ConfHyp \doteq \mathop{\times}\limits\limits_{k=1}^{n_0}[\nu_{n,k}, \mu_{n,k}]\\[0.5mm]
\end{split}
\end{equation}
Its optimal value is $\tau_n$, which can be explicitly written as
\begin{equation}
\label{norm-estimation-noisy}
\tau_n \, =\, \frac{1}{n_0} \sum_{k=1}^{n_0} \left(\frac{\max\{\nu_{n,k}^2, \mu_{n,k}^2\}}{h_*(x_k)} \land \varrho\right) + \varrho\, \sqrt{\frac{\ln (1/\alpha)}{2n_0}}.
\end{equation}
We use $n$ as its index (not $n_0$), since $\{\nu_{n,k}\}$ and $\{\mu_{n,k}\}$ are based on $\ConfEll$ that was built using the whole sample $\CZ_n$. Then\!\!\!
\smallskip
\begin{lemma}
\label{noisy-improved-version-subset}
{\em Let $\tau_0$ and $\tau_n$ be the optimal values of optimization problems \eqref{noisy-norm-max-rewritten} and \eqref{noisy-norm-bound-with-intervals}, respectively. We have $\tau_0 \leq \tau_n$.}
\end{lemma}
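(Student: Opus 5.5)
The plan is to compare the two problems \eqref{noisy-norm-max-rewritten} and \eqref{noisy-norm-bound-with-intervals} directly. They have the same objective function and differ only in their feasible sets, so the lemma reduces to a set inclusion: a maximum taken over a larger set cannot be smaller.

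The first step is to verify the inclusion $\ConfEll \subseteq \ConfHyp$. Take any $z \in \ConfEll$. By the definitions in \eqref{noisy-norm-max-intervals}, $\nu_{n,k}$ is the minimum and $\mu_{n,k}$ is the maximum of the $k$-th coordinate over $\ConfEll$. Hence $\nu_{n,k} \leq z_k \leq \mu_{n,k}$ for every $k \in [n_0]$, and so $z \in \times_{k=1}^{n_0}[\nu_{n,k},\mu_{n,k}]$.

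Both extrema in \eqref{noisy-norm-max-intervals} are actually attained when $\ConfEll$ is nonempty, because the ellipsoid is compact for positive definite $P_n$. The closed forms $e_k\tr\hat{z}_n \pm (e_k\tr P_n^{-1}e_k)^{1/2}$ are the standard support-function computation, though they are not needed for the inclusion.

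If $\ConfEll$ is empty, problem \eqref{noisy-norm-max-rewritten} is infeasible and its optimal value is $\tau_0 = -\infty$ by convention. The inequality $\tau_0 \leq \tau_n$ then holds trivially.

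The second step is monotonicity of the supremum. Write $J(z)$ for the common objective. Since $\ConfEll \subseteq \ConfHyp$, we get
$$\tau_0 = \sup_{z\in\ConfEll} J(z) \leq \sup_{z\in\ConfHyp} J(z) = \tau_n.$$
For completeness, we would also check that the explicit formula \eqref{norm-estimation-noisy} gives this supremum over the box. The objective is a sum of terms that each depend on a single coordinate, and the box is a product of intervals, so the maximization splits coordinatewise. On each interval $[\nu_{n,k},\mu_{n,k}]$, the map $z_k \mapsto z_k^2/h_*(x_k) \land \varrho$ is nondecreasing in $|z_k|$, so its maximum sits at the endpoint of largest absolute value. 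This yields the term $\max\{\nu_{n,k}^2,\mu_{n,k}^2\}/h_*(x_k) \land \varrho$.

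No step is a real obstacle; the argument is essentially immediate. The only points needing care are the degenerate case of an empty ellipsoid, handled by the $-\infty$ convention above, and the coordinatewise-separability justification for \eqref{norm-estimation-noisy}.
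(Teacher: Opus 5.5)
Your proposal is correct and follows the paper's proof: both problems share the same objective, and since $\ConfEll \subseteq \ConfHyp$, the maximum over the larger feasible set cannot be smaller, so $\tau_0 \leq \tau_n$. Your remarks on the empty-ellipsoid case and the coordinatewise derivation of the explicit formula for $\tau_n$ are sound, but the inequality does not need them.
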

\smallskip
\begin{proof}
$\tau_0$ and $\tau_n$ are optimal values of constrained maximization problems with the same objective function. For $\tau_0$ we optimize over ellipsoid $\ConfEll$, while for $\tau_n$ we optimize over hypercube
$\ConfHyp.$
Since $\ConfEll \subseteq \ConfHyp$, we get $\tau_0 \leq \tau_n$.
\end{proof}
\begin{remark}
\label{remark-tau-computation}
{\em As $\tau_0$ is only used as an {\em upper bound} for $\norm{f_*}_\CH^2$, it can be replaced everywhere by any $\tau$ with $\tau_0 \leq \tau \leq \tau_n$; larger values only widen the bands. As the cap can only decrease the objective, the optimal value $\tau_0^{\circ}$ of the {\em uncapped} problem, available from the convex dual, is such an upper bound, hence $\tau \defeq \min\{\tau_0^{\circ}, \tau_n\}$ can always be used.}
\end{remark}
\smallskip
We introduce another assumption stating that the underlying confidence ellipsoids shrink as the sample size increases.
\smallskip
\begin{assumption}
\label{assumption-ellipsoid-consistency}
{\em Let $\ConfEll$ be the ellipsoid \eqref{eq:conf-ell-set}, with center $\hat{z}_n$, and let $z_* \defeq (f_*(x_1), \dots, f_*(x_{n_0}))\tr$. We assume $n_0(n) \to \infty$, as $n\to \infty$, moreover, that $\ConfEll$ shrinks and its center is strongly consistent, that is, $\sup_{z_1, z_2 \in \mathcal{E}_{n_0}^n} \|z_1-z_2\hspace{0.3mm}\|_{K_{n_0}^{-1}}\convergealmostsurely 0$, as $n \to \infty$, and $\|\hat{z}_n-z_*\|_{K_{n_0}^{-1}}\convergealmostsurely 0$, where $\|v\|_{K_{n_0}^{-1}} \defeq (v\tr K_{n_0}^{-1} v)^{1/2}$ and $K_{n_0}$ is the kernel matrix w.r.t.\ inputs $x_1, \dots, x_{n_0}.$}
\end{assumption}
\smallskip
\begin{remark}
\label{remark-ellipsoid-consistency}
{\em As $\ConfEll$ is symmetric about $\hat{z}_n$, its radius is half of its diameter, hence, by the triangle inequality, A\ref{assumption-ellipsoid-consistency} implies
\begin{equation}
\label{eq:A5-mahalanobis}
    \sup_{z \in \mathcal{E}_{n_0}^n}\, (z-z_*)\tr K_{n_0}^{-1}(z-z_*)\convergealmostsurely\, 0,
\end{equation}
as $n \to \infty$, which is the form used in our proofs; note that it does {\em not} require $z_* \in \ConfEll$. Its second part is consistency in the RKHS norm, as $\|\hat{z}_n-z_*\|_{K_{n_0}^{-1}} = \|\hat{f}_{n,\hat{z}_n}-\hat{f}_{n,z_*}\|_\CH$; the Euclidean norm would not suffice, since $\lambda_{\min}(K_{n_0}) \to 0$.}
\end{remark}

With this, the norm bound $\tau_n$ is also strongly consistent:
\smallskip
\begin{lemma}
\label{norm-consistency-2}
{\em Assuming A\ref{assumption-iid-multivariate}, A\ref{assumption-positive-input-density}, 
A\ref{assumption-Paley-Wiener-space-multivariate},  A\ref{assumption-ellipsoid-guarantee}, and A\ref{assumption-ellipsoid-consistency}, we have
$$\tau_{n} \convergealmostsurely \norm{f_*}_\CH^2 = \kappa_*, \qquad\text{as}\;\, n \to \infty.$$
\vspace{-4mm}}
\end{lemma}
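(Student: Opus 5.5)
The proof compares $\tau_n$ with the noise-free quantity evaluated at the true outputs,
$$\kappa_{n_0} \defeq \frac{1}{n_0}\sum_{k=1}^{n_0} \frac{f_*^2(x_k)}{h_*(x_k)} + \varrho\sqrt{\frac{\ln(1/\alpha)}{2n_0}}.$$
By A\ref{assumption-Paley-Wiener-space-multivariate} the cap is inactive at $z_*$, so $\kappa_{n_0}$ is exactly the capped objective of \eqref{noisy-norm-bound-with-intervals} at $z_*$. Since $n_0(n)\to\infty$ by A\ref{assumption-ellipsoid-consistency}, the strong law of large numbers gives $\kappa_{n_0}\convergealmostsurely \kappa_*$, as in \eqref{noise-free-density-convergence} and Lemma \ref{norm-consistency}. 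Here the first $n_0$ terms of a single i.i.d.\ sequence are used, with the subsequence index tending to infinity. It therefore suffices to show that $|\tau_n - \kappa_{n_0}| \convergealmostsurely 0$.

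Write $m_k \defeq \max\{\nu_{n,k}^2,\mu_{n,k}^2\}$ and $w_k \defeq 1/h_*(x_k)$, and bound termwise. The map $t\mapsto (t\, w_k)\land \varrho$ is monotone and $1$-Lipschitz after the rescaling. Combined with the cap, this gives
$$|(m_k w_k\land\varrho) - f_*^2(x_k)w_k| \le \min\{\varrho,\, w_k\,|m_k - f_*^2(x_k)|\}.$$
The endpoints $\nu_{n,k},\mu_{n,k}$ are the extremes of the $k$-th coordinate over $\ConfEll$. Hence
$$|m_k - f_*^2(x_k)| \le \delta_k\,(2|f_*(x_k)|+\delta_k), \qquad \delta_k \defeq \sup_{z\in\ConfEll}|z_k - f_*(x_k)|.$$
The task is then to show that $\frac1{n_0}\sum_k \min\{\varrho, w_k\delta_k(2|f_*(x_k)|+\delta_k)\}\to 0$ a.s.

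To control $\delta_k$, use \eqref{eq:A5-mahalanobis}. For any $z\in\ConfEll$ we have $|z_k - f_*(x_k)| = |e_k\tr(z-z_*)| \le (e_k\tr K_{n_0} e_k)^{1/2}\,\|z-z_*\|_{K_{n_0}^{-1}}$ by Cauchy--Schwarz in the $K_{n_0}$ geometry. Since $e_k\tr K_{n_0}e_k = k(x_k,x_k) = \pi^{-d}\prod_j \eta_j \defeq \bar k$, this yields $\delta_k \le \sqrt{\bar k}\,\epsilon_n$ uniformly in $k$, where $\epsilon_n\defeq \sup_{z\in\ConfEll}\|z-z_*\|_{K_{n_0}^{-1}}\convergealmostsurely 0$. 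Equivalently, $\delta_k$ is the sup-norm distance between interpolants, and this distance is controlled by the RKHS distance because the kernel is bounded.

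The main obstacle is the unbounded weight $w_k = 1/h_*(x_k)$, which prevents a direct conclusion from $\delta_k\to0$ uniformly. The cap resolves this by truncation. Fix $M>0$ and split the indices according to whether $w_k\le M$. On $\{w_k\le M\}$ each term is at most $M\sqrt{\bar k}\,\epsilon_n(2\sqrt{\bar k}\|f_*\|_\CH+\sqrt{\bar k}\,\epsilon_n)$, using $|f_*(x)|\le\sqrt{\bar k}\|f_*\|_\CH$, and this tends to $0$ uniformly in $k$. On $\{w_k>M\}$ each term is at most $\varrho$, so their average is at most $\varrho\cdot\frac1{n_0}\sum_k \mathbb{I}(h_*(x_k)<1/M)$. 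By the strong law this average converges a.s.\ to $\varrho\,\mathbb{P}(h_*(X)<1/M)$, which can be made arbitrarily small by taking $M$ large. Running over a countable sequence $M\uparrow\infty$ gives $\limsup_n|\tau_n-\kappa_{n_0}|\le \varrho\,\mathbb{P}(h_*(X)<1/M)$ a.s.\ for every $M$, hence $\tau_n - \kappa_{n_0}\convergealmostsurely 0$. Together with $\kappa_{n_0}\convergealmostsurely\kappa_*$, this proves the statement.
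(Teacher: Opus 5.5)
Your proof is correct and follows essentially the same route as the paper's. It uses a termwise bound via the $1$-Lipschitz cap and $|a^2-b^2|\le\delta(2|b|+\delta)$, uniform control of the coordinate deviations through the reproducing property and \eqref{eq:A5-mahalanobis}, and a truncation at $h_*(x_k)<1/M$ handled by the SLLN on the indicator count over a countable sequence of thresholds. The differences are only cosmetic: you compare with $\kappa_{n_0}$ instead of $\sigma_n$, and you bound $|f_*(x_k)|$ by $\sqrt{\bar k}\,\|f_*\|_\CH$ rather than by $\sqrt{\varrho\, h_*(x_k)}$.
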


The proof of this lemma can be found in Appendix \ref{appendix-norm-consistency-2}.

Let $z_* \defeq (f_*(x_1), \dots, f_*(x_{n_0}))\tr$ be the vector of noise-free outputs, let ${\tau}_{n}^* \defeq \max \{ \tau_{n}, \kappa_* \}$ and $\ConfEllStar = \ConfEll\cup\{z_*\}$. As a theoretical tool for our consistency analysis, we introduce
\begin{equation*}
\begin{aligned}
{\mathcal{D}}^{*}_{n} &\defeq \{ f \in \CH \mid \exists\, z \in \ConfEllStar\hspace{-0.3mm}:\hspace{-0.3mm} f \interpolates \{(x_k,z_k)\}_{k=1}^{n_0} \land \norm{f}_\mathcal{H}^2 \leq {\tau}_{n}^* \}.
\end{aligned}
\end{equation*}

Note that since $\ConfEll \subseteq \ConfEllStar$ and $\tau_0\leq  {\tau}_{n}^*$, we have $\mathcal{D}_n \subseteq {\mathcal{D}}^{*}_{n}$. 
Thus, it is enough to show the (unif.) consistency of $\{{\mathcal{D}}^{*}_{n}\}$ as it implies the consistency of $\{\mathcal{D}_{n}\}$, from which the consistency of the confidence bands $\{\tilde{B}_n\}$ follows, using Theorem \ref{theorem-consistency-abstract-endpoint-based}.

The minimum-norm interpolant of any $z \in \ConfEllStar$ is
$$\hat{f}_{n,z} \defeq \argmin \big\{\,\|\hspace{0.3mm}f\hspace{0.4mm}\|_{\mathcal{H}} \mid f \in \mathcal{H}\hspace{1.5mm} \&\hspace{1.5mm} \forall\hspace{0.3mm} k \in [n_0]: f(x_k) =\, z_k   \,  \big\}.$$
Using this notation, we write $\hat{f}_{n,z_*}$ to denote the minimum-norm interpolant of the true outputs $z_*$. Let $ \hat{\tau}_{n} \doteq \|\hat{f}_{n,z_*}\|^2_{\CH}$. 

\smallskip
\begin{remark}
{\em As $\hat{f}_{n,z_*}$ and $f_{*}$ both interpolate $z_*$, and because $\hat{f}_{n,z_*}$ has the smallest 
norm among such functions, we have
$$\|\hat{f}_{n,z_*}\|^2_\CH \,\leq \,\norm{f_{*}}^2_\CH \,=\, \kappa_* \,\leq\, \tau_n^*,$$
therefore, $f_*$ and $\hat{f}_{n,z_*}$ are both elements of\, ${\mathcal{D}}^{*}_{n}$.}
\end{remark}
\smallskip

A key observation for the strong uniform consistency theorem is summarized by the lemma below.

\smallskip
\begin{lemma}
\label{ellipsoid-norm-diff}
{\em Assume A\ref{assumption-iid-multivariate}, A\ref{assumption-positive-input-density}, 
A\ref{assumption-Paley-Wiener-space-multivariate},  A\ref{assumption-ellipsoid-guarantee}, and A\ref{assumption-ellipsoid-consistency}. Let $f,f' \in \mathcal{D}^{*}_{n}$ such that they interpolate $z, z' \in \ConfEllStar$, respectively. Then
$$\|f-f'\|_\CH^2 \leq (z - z')\tr K_{n_0}^{-1} (z - z') +\vspace{-1mm}$$
$$\left( \sqrt{\tau^*_n - z\tr K_{n_0}^{-1}z} + \sqrt{\tau^*_n - (z')\tr K_{n_0}^{-1} z'} \,\right)^{\!2}\!\!,\vspace{-1mm}$$
where $K_{n_0}$ is the Gram matrix associated with $x_1, \dots, x_{n_0}.$}
\end{lemma}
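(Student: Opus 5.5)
We will use the same orthogonal decomposition as in the noise-free analysis, now with respect to the subspace $\CH_{n_0} \defeq \text{span}\{k(\cdot,x_k): k\in[n_0]\}$. For $f \in \mathcal{D}^*_n$ interpolating $z \in \ConfEllStar$, we write $f = f_\parallel + f_\bot$ with $f_\parallel \in \CH_{n_0}$ and $f_\bot \in \CH_{n_0}^\bot$. The first step is the analogue of Lemma \ref{parallel-perpendicular-lemma}: $f_\parallel = \hat{f}_{n,z}$. The argument is the same as there. By the reproducing property, $g \in \CH_{n_0}^\bot$ if and only if $g(x_k) = 0$ for all $k \in [n_0]$. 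Hence $f_\parallel$ still interpolates $z$. Moreover, the set of interpolants of $z$ is the affine set $\hat{f}_{n,z} + \CH_{n_0}^\bot$, and $\hat{f}_{n,z} \in \CH_{n_0}$. Two elements of $\CH_{n_0}$ whose difference lies in $\CH_{n_0}^\bot$ coincide, so $f_\parallel = \hat{f}_{n,z}$. The same holds for $f'$, giving $f'_\parallel = \hat{f}_{n,z'}$.

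Next, we bound the perpendicular parts. From Section \ref{sec:min-norm-int}, $\|\hat{f}_{n,z}\|_\CH^2 = z\tr K_{n_0}^{-1} z$; here $K_{n_0}$ is a.s.\ invertible, as the inputs are a.s.\ distinct and the kernel is strictly positive definite. Pythagoras and the norm constraint in $\mathcal{D}^*_n$ give
$$z\tr K_{n_0}^{-1} z + \|f_\bot\|_\CH^2 = \|f\|_\CH^2 \leq \tau^*_n,$$
so $\|f_\bot\|_\CH \leq \sqrt{\tau^*_n - z\tr K_{n_0}^{-1} z}$, and the same bound holds for $f'_\bot$ with $z'$. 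The quantity under the square root is nonnegative precisely because $f \in \mathcal{D}^*_n$.

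Finally, we combine the pieces. Since $f - f' = (\hat{f}_{n,z} - \hat{f}_{n,z'}) + (f_\bot - f'_\bot)$ is again an orthogonal decomposition, Pythagoras gives
$$\|f-f'\|_\CH^2 = \|\hat{f}_{n,z} - \hat{f}_{n,z'}\|_\CH^2 + \|f_\bot - f'_\bot\|_\CH^2.$$
Interpolation is linear, so $\hat{f}_{n,z} - \hat{f}_{n,z'} = \hat{f}_{n,z-z'}$, whose squared norm is $(z-z')\tr K_{n_0}^{-1}(z-z')$. The triangle inequality then gives $\|f_\bot - f'_\bot\|_\CH \leq \|f_\bot\|_\CH + \|f'_\bot\|_\CH$, and squaring the sum of the two bounds from the previous step yields the claim.

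The only step needing care is the identification $f_\parallel = \hat{f}_{n,z}$ for an arbitrary $z \in \ConfEllStar$, including $z = z_*$. That argument is purely deterministic and does not use A\ref{assumption-ellipsoid-consistency}. The consistency assumptions only enter later, when this bound is used to show that the right-hand side vanishes.
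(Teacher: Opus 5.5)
Your proof is correct and follows essentially the same route as the paper's. You decompose $f$ and $f'$ orthogonally with respect to $\CH_{n_0}$, identify the parallel parts with the minimum-norm interpolants of $z$ and $z'$, bound the perpendicular parts via Pythagoras and $\|f\|_\CH^2 \leq \tau^*_n$, and finish with $\|f_\bot - f'_\bot\|_\CH \leq \|f_\bot\|_\CH + \|f'_\bot\|_\CH$. The only cosmetic difference is how you identify $f_\parallel = \hat{f}_{n,z}$: you use the affine structure of the interpolant set and $\CH_{n_0} \cap \CH_{n_0}^\bot = \{0\}$, whereas the paper uses uniqueness of the solution of $K_{n_0}\alpha = z$ and treats $f_\parallel - f'_\parallel$ as the unique interpolant of $z - z'$ in $\CH_{n_0}$.
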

\smallskip
The proof of Lemma \ref{ellipsoid-norm-diff} can be found in Appendix \ref{appendix-ellipsoid-norm-diff}.

The last result that we need is that the norms of the minimal-norm interpolants of all output vectors in the confidence ellipsoid converge to the norm of the regression function:

\smallskip
\begin{lemma}
\label{ellipsoid-norm-convergence}
{\em Assume A\ref{assumption-iid-multivariate}, A\ref{assumption-positive-input-density}, 
A\ref{assumption-Paley-Wiener-space-multivariate},  A\ref{assumption-ellipsoid-guarantee}, and A\ref{assumption-ellipsoid-consistency}. Then
$$
\sup_{z \in \ConfEllStar}\! z\tr K_{n_0}^{-1} z \convergealmostsurely  \kappa_*,\quad\text{and}\quad
\inf_{z \in \ConfEllStar}\! z\tr K_{n_0}^{-1} z \convergealmostsurely \kappa_*,
$$
as $n\to \infty$, where $\kappa_* =\|f_*\|_{\CH}^2$.}
\end{lemma}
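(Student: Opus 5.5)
Both limits follow by sandwiching $z\tr K_{n_0}^{-1} z$ around $\hat{\tau}_n \doteq z_*\tr K_{n_0}^{-1} z_*$, which converges to $\kappa_*$. Work with the norm $\|v\|_{K_{n_0}^{-1}} = (v\tr K_{n_0}^{-1} v)^{1/2}$ on $\RR^{n_0}$. This is a genuine norm, since $K_{n_0}$ is a.s.\ positive definite by strict positive definiteness of the Paley--Wiener kernel and a.s.\ distinctness of the inputs. Moreover, $\|z\|_{K_{n_0}^{-1}} = \|\hat{f}_{n,z}\|_\CH$ by Section~\ref{sec:min-norm-int}. The reverse triangle inequality then gives, for every $z \in \ConfEllStar$,
\begin{equation*}
\big|\, \|z\|_{K_{n_0}^{-1}} - \|z_*\|_{K_{n_0}^{-1}} \big| \,\leq\, \|z - z_*\|_{K_{n_0}^{-1}} .
\end{equation*}
Taking the supremum over $z \in \ConfEllStar = \ConfEll \cup \{z_*\}$, the right-hand side is bounded by $\sup_{z \in \ConfEll} \|z-z_*\|_{K_{n_0}^{-1}}$ (the point $z_*$ contributes $0$). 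By Remark~\ref{remark-ellipsoid-consistency}, i.e.\ \eqref{eq:A5-mahalanobis}, this tends to $0$ almost surely.

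The next step is to show $\|z_*\|_{K_{n_0}^{-1}}^2 = \hat{\tau}_n \convergealmostsurely \kappa_*$. This is exactly the content of Remark~\ref{remark-min-interp-norm-conv}, applied with the first $n_0$ inputs instead of the first $n$. The remark uses only that the inputs are i.i.d.\ with positive density, hence a.s.\ dense in $\RR^d$, that $f_* \in \CH$, and that $\hat{f}_{n,z_*}$ is the orthogonal projection of $f_*$ onto $\CH_{n_0}$. Since A\ref{assumption-ellipsoid-consistency} gives $n_0(n) \to \infty$, the projections onto the increasing subspaces $\CH_{n_0(n)}$ converge to $f_*$. If $n_0(n)$ is not monotone, I would note that the convergence $\|\hat{f}_m - f_*\|_\CH \to 0$ along $m$ holds on a single probability-one event, so it transfers to any subsequence $m = n_0(n) \to \infty$. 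In either case $\hat{\tau}_n \convergealmostsurely \kappa_*$.

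Combining the two steps on the intersection of the two probability-one events: writing $a_n = \sqrt{\hat{\tau}_n} \to \sqrt{\kappa_*}$ and $\delta_n = \sup_{z\in\ConfEll}\|z-z_*\|_{K_{n_0}^{-1}} \to 0$, we get for every $z \in \ConfEllStar$
\begin{equation*}
\big(|a_n - \delta_n|_+\big)^2 \,\leq\, z\tr K_{n_0}^{-1} z \,\leq\, (a_n+\delta_n)^2 .
\end{equation*}
Both bounds converge to $\kappa_*$, so the supremum and infimum over $\ConfEllStar$ do as well.

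The main obstacle is only the bookkeeping: the almost sure statements from Remark~\ref{remark-min-interp-norm-conv} and A\ref{assumption-ellipsoid-consistency} must hold simultaneously, and the index $n_0(n)$ must be handled correctly, as above. No further estimates are needed.
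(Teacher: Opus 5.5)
Your proof is correct and follows essentially the paper's route: both arguments sandwich $z\tr K_{n_0}^{-1} z$ around $z_*\tr K_{n_0}^{-1} z_*$, bound the deviation by the Mahalanobis radius from \eqref{eq:A5-mahalanobis}, and invoke Remark~\ref{remark-min-interp-norm-conv}. The differences are cosmetic. You use the reverse triangle inequality in the $K_{n_0}^{-1}$-norm instead of expanding the quadratic form and applying Cauchy--Schwarz to the cross term, which gives the supremum and infimum bounds in a single two-sided estimate. You also make explicit the reindexing $m = n_0(n)$ that the paper leaves implicit.
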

\smallskip
From our technical lemmas the {\em strong uniform consistency} of the {\em abstract} MiNCE confidence regions $\{\mathcal{D}_n\}$ follows:

\smallskip
\begin{theorem}
\label{consistency-known-density}
{\em Assume A\ref{assumption-iid-multivariate}, A\ref{assumption-positive-input-density}, 
A\ref{assumption-Paley-Wiener-space-multivariate},  A\ref{assumption-ellipsoid-guarantee}, and A\ref{assumption-ellipsoid-consistency}. Then
$$
\sup_{f\in\mathcal{D}_n}\!\|f - f_*\|_{\mathcal{H}}\,\convergealmostsurely\;0, \quad\text{as}\quad n \to \infty.
$$
}
\end{theorem}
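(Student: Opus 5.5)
Since $\mathcal{D}_n \subseteq \mathcal{D}^*_n$, it suffices to show $\sup_{f\in\mathcal{D}^*_n}\|f-f_*\|_\CH \convergealmostsurely 0$. We also have $f_*\in\mathcal{D}^*_n$, because $f_*$ interpolates $z_*\in\ConfEllStar$ and $\|f_*\|_\CH^2=\kappa_*\le\tau_n^*$. Hence we can apply Lemma \ref{ellipsoid-norm-diff} with $f' = f_*$ and $z'=z_*$. For every $f\in\mathcal{D}^*_n$ interpolating some $z\in\ConfEllStar$ this gives
\begin{equation*}
\|f-f_*\|_\CH^2 \le (z-z_*)\tr K_{n_0}^{-1}(z-z_*) + \Big(\sqrt{\tau^*_n - z\tr K_{n_0}^{-1}z} + \sqrt{\tau^*_n - z_*\tr K_{n_0}^{-1}z_*}\Big)^{2}.
\end{equation*}
Taking the supremum over $f$ therefore reduces to bounding three quantities uniformly over $z\in\ConfEllStar$, each on a single almost sure event.

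The first term is handled by \eqref{eq:A5-mahalanobis} of Remark \ref{remark-ellipsoid-consistency}. It gives $\sup_{z\in\ConfEll}(z-z_*)\tr K_{n_0}^{-1}(z-z_*)\convergealmostsurely 0$, and the extra point $z_*$ contributes zero, so the same holds over $\ConfEllStar$.

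For the second term, Lemma \ref{norm-consistency-2} gives $\tau_n\convergealmostsurely\kappa_*$, hence $\tau_n^*=\max\{\tau_n,\kappa_*\}\convergealmostsurely\kappa_*$. By Lemma \ref{ellipsoid-norm-convergence}, $\inf_{z\in\ConfEllStar} z\tr K_{n_0}^{-1}z\convergealmostsurely\kappa_*$. Therefore
\begin{equation*}
\sup_{z\in\ConfEllStar}\big(\tau^*_n - z\tr K_{n_0}^{-1}z\big) = \tau_n^* - \inf_{z\in\ConfEllStar} z\tr K_{n_0}^{-1}z \convergealmostsurely 0.
\end{equation*}
The quantity under each square root is nonnegative for $z$ with $f\in\mathcal{D}^*_n$, because the minimum-norm interpolant of $z$ has squared norm $z\tr K_{n_0}^{-1}z\le\|f\|_\CH^2\le\tau_n^*$. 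So both square roots are bounded by $\sqrt{\sup_z(\tau_n^*-z\tr K_{n_0}^{-1}z)}$, which tends to $0$ almost surely. The $z_*$ term also vanishes separately, since $z_*\tr K_{n_0}^{-1}z_*=\hat\tau_n$ lies between the inf and the sup in Lemma \ref{ellipsoid-norm-convergence}.

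Putting these together, $\sup_{f\in\mathcal{D}^*_n}\|f-f_*\|_\CH^2\le o(1)+\big(2\,o(1)\big)^2\convergealmostsurely 0$, where each $o(1)$ term holds on the intersection of finitely many probability-one events. This proves the theorem for $\mathcal{D}_n$ by inclusion. If $\mathcal{D}_n=\emptyset$ the supremum is vacuous, and the band statement then follows from Theorem \ref{theorem-consistency-abstract-endpoint-based} whenever the regions are nonempty. The real substance lies in Lemma \ref{ellipsoid-norm-convergence}, which is assumed here, so the main care needed in this proof is to make sure the bound of Lemma \ref{ellipsoid-norm-diff} is controlled uniformly in $z$ through infima and suprema rather than pointwise.
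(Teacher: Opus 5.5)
Your proposal is correct and follows the paper's proof essentially step for step: reduce to $\mathcal{D}^*_n$, apply Lemma~\ref{ellipsoid-norm-diff} with $f'=f_*$ and $z'=z_*$, then send the three resulting terms to zero using \eqref{eq:A5-mahalanobis}, Lemma~\ref{norm-consistency-2}, and Lemma~\ref{ellipsoid-norm-convergence}. Your explicit check that $\tau_n^*-z\tr K_{n_0}^{-1}z\ge 0$ for every $z$ interpolated by an element of $\mathcal{D}^*_n$ is a small clarification that the paper leaves implicit when it takes the supremum of the square roots.
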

\begin{proof}
Because $\mathcal{D}_n \subseteq \mathcal{D}_n^*$ by construction, it is enough to prove the strong uniform consistency for sets $\{\mathcal{D}_n^*\}$.

By applying Lemma \ref{ellipsoid-norm-diff}, we have
\vspace{-1mm}
\begin{align*}
    &\sup_{f \in \mathcal{D}_n^*} \|f - f_*\|_{\mathcal{H}}^2 \le \overbrace{\sup_{z \in \mathcal{E}_{n_0}^*} (z - z_*)\tr K_{n_0}^{-1} (z - z_*)}^{\xi_{1,n}} 
    \\[-1mm]
    &\quad + \bigg( \underbrace{\sup_{z \in \mathcal{E}_{n_0}^*}\!\! \sqrt{\tau_n^* - z\tr K_{n_0}^{-1} z}}_{\xi_{2,n}} + \underbrace{\sqrt{\tau_n^* - z_*\tr K_{n_0}^{-1} z_*}}_{\xi_{3,n}} \bigg)^{\!2}\!\!.
\end{align*}
\vspace{-2mm}

We study the asymptotics of $\xi_{1,n},\xi_{2,n}$ and $\xi_{3,n}$ separately.

Since $(z - z_*)\tr K_{n_0}^{-1} (z - z_*) \geq 0,$ for all $z \in \mathbb{R}^{n_0}$, the supremum over $\mathcal{E}_{n_0}^* \doteq \mathcal{E}_{n_0}^n \cup \{z_*\}$ simplifies to the supremum over $\ConfEll$, therefore 
$\xi_{1,n}\convergealmostsurely 0$, as $n\to \infty$, 
by \eqref{eq:A5-mahalanobis}.

As for the term $\xi_{2,n}$, observe that
\begin{equation*}
        \sup_{z \in \ConfEllStar}\! \sqrt{\tau_n^* - z\tr K_{n_0}^{-1} z} = \sqrt{\tau_n^* - \inf_{z \in \ConfEllStar}\! z\tr K_{n_0}^{-1} z}.
        \vspace{-1mm}
\end{equation*}
We know by Lemma \ref{ellipsoid-norm-convergence} that $\inf_{z \in \ConfEllStar}\! z\tr K_{n_0}^{-1} z \convergealmostsurely \kappa_*$ and, by Lemma \ref{norm-consistency-2}, that $\tau_n^* \xrightarrow{\text{a.s.}} \kappa_*$. Thus, $\xi_{2,n} \convergealmostsurely 0$, as $n \to \infty$.

Finally, by Lemma \ref{ellipsoid-norm-convergence}, $z_*\tr K_{n_0}^{-1} z_* \xrightarrow{\text{a.s.}} \kappa_*$ and, by Lemma \ref{norm-consistency-2}, $\tau_n^* \xrightarrow{\text{a.s.}} \kappa_*$, implying, $\xi_{3,n} \convergealmostsurely \sqrt{\kappa_* - \kappa_*} = 0$, as $n \to \infty$.

Hence, $\sup_{f \in \mathcal{D}_n^*} \|f - f_*\|_{\mathcal{H}} \convergealmostsurely 0$, completing the proof. 
\end{proof}

Combining Theorems \ref{theorem-consistency-abstract-endpoint-based} and \ref{consistency-known-density}, we can conclude that the MiNCE confidence {\em bands} are also {\em strongly uniformly consistent} in case there are measurement noises, that is,
\smallskip
\begin{corollary}
{\em Under the assumptions of Theorem \ref{consistency-known-density}, we have
\begin{equation*}
\max\hspace{-0.5mm}\big\{\hspace{0.5mm} \|\tilde{L}_n - f_*\|_\infty, \|\hspace{0.3mm} \tilde{U}_n - f_*\|_\infty \big\} \convergealmostsurely\, 0, \quad\text{as}\quad n\to\infty.
\end{equation*}}
\end{corollary}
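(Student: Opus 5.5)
This corollary follows by chaining two earlier results, so the plan is to check that their hypotheses match. Theorem \ref{theorem-consistency-abstract-endpoint-based} turns strong consistency of abstract regions in the RKHS norm into uniform consistency of the induced band endpoints. Theorem \ref{consistency-known-density} supplies that consistency for the regions $\{\mathcal{D}_n\}$.

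\textbf{Step 1: the RKHS setting.} The Paley--Wiener kernel is bounded, since $|k(u,v)| \leq k(u,u) = \pi^{-d}\prod_k \eta_k$. Hence $\CH$ is an RKHS with bounded kernel, as Theorem \ref{theorem-consistency-abstract-endpoint-based} requires, and $f_* \in \CH$ by A\ref{assumption-Paley-Wiener-space-multivariate}.

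\textbf{Step 2: nonemptiness.} Theorem \ref{theorem-consistency-abstract-endpoint-based} asks for nonempty regions, and $\mathcal{D}_n$ is nonempty only on the coverage event of Lemma \ref{lemma-norm-estimation-known-density-noisy}. This is the one point needing care. I would argue that it holds eventually almost surely, which is all an a.s.\ limit statement requires.
\begin{itemize}
\item Remark \ref{remark-tau-computation} lets us take $\tau_0 \le \tau \le \tau_n$, and Lemma \ref{norm-consistency-2} gives $\tau_n \to \kappa_*$.
\item So what must be shown is that, eventually, some $z \in \ConfEll$ satisfies $z\tr K_{n_0}^{-1}z \le \tau_0$. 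This needs a margin. The cleaner route is to read the convergence statement of Theorem \ref{theorem-consistency-abstract-endpoint-based} as a statement about $\sup$ over possibly empty sets, with the convention $\sup\emptyset = 0$ and empty fibers discarded.
\item Alternatively, one can simply note that on the event $f_* \in \mathcal{D}_n$ the region is nonempty.
\item If needed, I would restate Theorem \ref{theorem-consistency-abstract-endpoint-based} in its pointwise form. Its proof applies the reproducing-property bound $|f(x)-f_*(x)| \le \sqrt{k(x,x)}\,\|f-f_*\|_\CH$ to each $f \in \mathcal{D}_n$.
\end{itemize}

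\textbf{Step 3: the pointwise bound.} With this bound, for every $x$ we get
\[
|\tilde U_n(x) - f_*(x)|,\; |\tilde L_n(x) - f_*(x)| \le \sqrt{\sup_x k(x,x)}\,\sup_{f\in\mathcal{D}_n}\|f-f_*\|_\CH,
\]
using that $\tilde L_n(x) \le f(x) \le \tilde U_n(x)$ for $f \in \mathcal{D}_n$. The inf/sup over $\mathcal{D}_n$ are attained within a distance controlled by the same bound, so these inequalities hold for the endpoints themselves.

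\textbf{Step 4: conclusion.} Theorem \ref{consistency-known-density} sends the right-hand side to $0$ a.s. Taking $\sup_x$ of the left-hand side then gives the corollary.

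The only delicate point is the nonemptiness and well-definedness of the endpoints in Step 2. Everything else is a direct substitution.
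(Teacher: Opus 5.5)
Your chain (bounded Paley--Wiener kernel, then Theorem \ref{consistency-known-density} fed into Theorem \ref{theorem-consistency-abstract-endpoint-based}) is exactly the paper's proof. The paper gives only that one sentence, and never checks the nonemptiness hypothesis of Theorem \ref{theorem-consistency-abstract-endpoint-based}. You are right to flag that hypothesis. It is what gives $\tilde U_n(x)\ge f(x)\ge f_*(x)-\sqrt{\zeta}\sup_{f\in\mathcal{D}_n}\|f-f_*\|_\CH$ by picking some $f\in\mathcal{D}_n$, where $\zeta\doteq\sup_x k(x,x)$, and symmetrically for $\tilde L_n$. However, your Step 2 does not close the issue, and none of your three options works.

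(a) $\mathcal{D}_n\neq\emptyset$ iff $\inf_{z\in\ConfEll}z\tr K_{n_0}^{-1}z\le\tau_0$. Both sides tend to $\kappa_*$, so eventual nonemptiness is a second-order question that A\ref{assumption-iid-multivariate}--A\ref{assumption-ellipsoid-consistency} do not settle. The Hoeffding margin is only of order $n_0^{-1/2}$. In the noise-free analogue, $\mathcal{C}_n=\emptyset$ iff $\kappa_n<\hat\kappa_n$. By the law of the iterated logarithm, $\kappa_*-\frac1n\sum_k f_*^2(x_k)/h_*(x_k)$ exceeds $c\sqrt{\ln\ln n/n}$ infinitely often a.s.\ whenever $f_*^2(X)/h_*(X)$ is nondegenerate. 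So the region is empty infinitely often whenever $\kappa_*-\hat\kappa_n$ decays faster than $\sqrt{\ln\ln n/n}$, and nothing in the assumptions excludes this.

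(b) With $\sup\emptyset=0$, Theorem \ref{consistency-known-density} is harmless at empty indices. But the corollary's quantities there are $\tilde L_n\equiv+\infty$ and $\tilde U_n\equiv-\infty$. Feasibility of \eqref{noisy-opt-min-max-modified} does not depend on $x_0$, so ``empty fibers'' means the whole band is empty. Discarding them proves a different, weaker statement.

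(c) $\mathbb{P}(f_*\in\mathcal{D}_n)\ge 1-\alpha-\beta$ for each fixed $n$ says nothing about all large $n$. In the noise-free analogue, the iterated-logarithm argument above shows that coverage fails infinitely often a.s. Also, $\mathcal{D}_n$ is nonempty \emph{at least} on the coverage event, not \emph{only} there.

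To get the stated conclusion you need an extra ingredient. One option is to add the hypothesis that $\mathcal{D}_n\neq\emptyset$ for all large $n$, a.s. The other is to make nonemptiness automatic, e.g.\ by using $\tau_0'\doteq\max\{\tau_0,\inf_{z\in\ConfEll}z\tr K_{n_0}^{-1}z\}$ as the norm bound. This only enlarges $\mathcal{D}_n$, so coverage is kept, and it makes the region nonempty by construction. The proof of Theorem \ref{consistency-known-density} still goes through with $\tau_n^*$ replaced by $\max\{\tau_n,\kappa_*,\inf_{z\in\ConfEll}z\tr K_{n_0}^{-1}z\}$. This works because the sandwich argument in the proof of Lemma \ref{ellipsoid-norm-convergence} shows that this infimum also tends to $\kappa_*$ a.s. With either fix in place, your Steps 1, 3 and 4 are correct.
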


\section{Robust Spectral Estimation}

Now, we demonstrate how MiNCE can be applied to spectral estimation, a fundamental problem in signal processing.
The task is to estimate the (magnitude and\;/\,or phase) {\em spectrum} of a signal \cite{tary2014spectral}. In case of the magnitude spectrum, one of the most prominent approaches is the {\em periodogram}, which provides an {\em asymptotically unbiased} estimate of the spectrum using the discrete Fourier transform; however, it is {\em not consistent}, since its variance does not converge to zero as the sample size increases \cite{stoica2005spectral}. Even for noise-free outputs, its core assumption is deterministic, equidistant sampling which may be unrealistic for some applications (like wireless sensor networks, compressive sensing, mobile sensing, etc.). As we consider randomly sampled (non-equidistant) inputs, approaches like the periodogram cannot be applied in our case.

Here, we focus on constructing a consistent point estimate and corresponding {\em nonasymptotic confidence regions} for the spectrum, based on a random sample of input-output pairs. Note that we cannot directly use our previous construction in the frequency domain, since (i) we do not have direct observations of the spectrum, and, moreover, (ii) $\mathcal{L}^2([-\eta,\eta]^d)$ is not even an RKHS, point evaluations are not continuous.

Nevertheless, an abstract, nonasymptotic MiNCE confidence region, which contains functions in the spatial domain, can be transformed to a confidence region in the frequency domain, simply by including the Fourier transform of each function in the original (spatial domain) set. However, the obtained abstract sets are hard to work with in practice. We will show how to use MiNCE to get a strongly consistent estimate of the {\em smoothed} spectrum of a signal, based on a random dataset. Smoothing can play an important role, as it reduces the variance of the estimator and improves the consistency guarantees \cite{colbrook2021computing}. For simplicity, we will focus on the case of {\em noise-free} outputs (at random inputs).

As the minimum-norm interpolant, $\hat{f}_n$, is a key object for MiNCE, we start by computing its Fourier transform,
\begin{equation*}
(\mathcal{F}(\hat{f}_n))(\omega) = \frac{1}{(2\pi)^{d/2}} \int_{\RR^d} \hat{f}_n(x)\, e^{-i\langle x,\omega\rangle}\: \mbox{d}x,
\end{equation*}
for $\omega \in [-\eta,\eta]^d$, otherwise $0$. By using the special structure of the minimum-norm interpolant, see \eqref{min-norm-interpolant-formula}, we have
\begin{equation*}
\begin{aligned}
(\mathcal{F}(\hat{f}_n))(\omega) &= \sum_{k=1}^n \hat{\alpha}_k \mathcal{F}(k(\cdot,x_k))(\omega)\\
&= \sum_{k=1}^n \hat{\alpha}_k \frac{e^{-i\langle\omega, x_k\rangle}}{(2\pi)^{d/2}} \mathbb{I}_{[-\eta,\eta]^d}(\omega),
\end{aligned}
\end{equation*}
where $\mathbb{I}_A$ is the indicator function of set $A$. In the noise-free case, under our assumptions, $\mathcal{F}(\hat{f}_n)$ is a strongly consistent estimator of $\mathcal{F}(f_*$), since $\| \hat{f}_n - f_*\|_2 \convergealmostsurely 0, 
\text{\;as\;}n\to \infty, \implies \| \mathcal{F}(\hat{f}_n) - \mathcal{F}(f_*)\|_2 \convergealmostsurely 0$, due to Plancherel's theorem.

For finite, randomly sampled datasets, it is more efficient and robust to estimate the {\em smoothed} spectrum. 
Let $f \in \CH$,
the {\em smoothed spectrum} of $f$ at frequency vector $\omega \in [-\eta,\eta]^d$ is
\begin{equation*}
\begin{aligned}
(\CS_{\phi}(f))(\omega) &\defeq (\mathcal{F}(f) \star \phi)(\omega)
\\ &=\int_{[-\eta,\eta]^d} (\mathcal{F}(f))(\gamma) \phi(\omega-\gamma) \:\mbox{d}\gamma,
\end{aligned}
\end{equation*}
where $\star$ denotes the convolution operator and function $\phi \in \mathcal{L}^2([-\eta,\eta]^d)$ is a user-chosen nonnegative {\em smoothing} or {\em window} function which is $\mathcal{L}^1$ normalized,  
$\int_{[-\eta,\eta]^d} \phi(\omega) \: \mbox{d}\omega = 1$.

A typical smoother is the rectangular function. Given (hyperparameter) half-width $h=(h_1, \dots, h_d)$, it is defined as\vspace{-1.5mm}
$$
\phi_h(\omega) = \frac{1}{\prod_{i=1}^d(2h_i)}\prod_{i=1}^d \mathbb{I}_{[-h_i,h_i]}(\omega_i).
$$
A more sophisticated choice is the triangular smoother, also called Fejér or Bartlett window, which is defined as\vspace{-1mm}
$$
\phi_h(\omega) = \prod_{i=1}^d\frac{1}{h_i}\bigg( 1- \frac{|\omega_i|}{h_i}\bigg)\mathbb{I}_{[-h_i,h_i]}(\omega_i),
$$
for which $\left\| \phi_h \right\|_1 = 1$, moreover, one can easily show that
\vspace{-1mm}
\[
\left\| \phi_h \right\|_2
=
\left( \prod_{i=1}^d \frac{2}{3 h_i} \right)^{\!\!1/2}\!\!\!\!\!
=
\left( \frac{2}{3} \right)^{\!d/2}\!
\left( \prod_{i=1}^d h_i \right)^{\!\!-1/2}\!\!\!\!\!\!\!\!\!\!\!.
\]
Now, let us take a look at how well the smoothed spectrum of the minimum-norm interpolant approximates the smoothed spectrum of the original function. Observe that, for all $\omega$,
\begin{align*}
&|\,(\CS_{\phi}(\hat{f}_n))(\omega)- (\CS_{\phi}(f_*))(\omega)\,|=\\
&\bigg|\int_{[-\eta,\eta]^d} \Big((\mathcal{F}(\hat{f}_n))(\gamma) - (\mathcal{F}(f_*))(\gamma)\Big) \phi(\omega-\gamma) \:\mbox{d}\gamma\,\bigg|.
\end{align*}
Then, by using the Cauchy-Schwarz inequality, we have
\begin{align*}
&\sup\nolimits_{\omega}|\,(\CS_{\phi}(\hat{f}_n))(\omega)- (\CS_{\phi}(f_*))(\omega)\,|\\
& \leq\, \| \CF(\hat{f}_n) - \CF(f_*)\|_2 \; \|\phi\|_2 \\
& \leq\, \| \hat{f}_n - f_*\|_2 \; \|\phi\|_2 \convergealmostsurely\hspace{0.3mm} 0,
\end{align*}
as $n\to \infty$, by Plancherel’s theorem, therefore, $\CS_{\phi}(\hat{f}_n)$ is a {\em strongly uniformly consistent} estimate of $\CS_{\phi}(f_*)$, assuming the min-norm interpolant converges, e.g., under A\ref{assumption-iid-multivariate}, A\ref{assumption-positive-input-density} and A\ref{assumption-Paley-Wiener-space-multivariate}.

Then, we can build an {\em abstract} nonasymptotic confidence region for $\CS_{\phi}(f_*)$ based on the MiNCE framework, as follows:
\begin{equation*}
\mathcal{Q}_n \doteq \big\{\,\CS_{\phi}(f): f \in \CH\, \land\, \| \hat{f}_n - f\|_2 \leq r_n \hspace{0.3mm}\big\},
\end{equation*}
where $r_n \doteq \sqrt{|\kappa_n - \hat{\kappa}_n|_+}$ is the radius from \eqref{eq:outer-ball}; that is, $\mathcal{Q}_n$ is the image of $\mathcal{O}_n$ under $\CS_{\phi}$. Since $\|\phi\|_1 = 1$, Young's inequality gives $\|\CS_{\phi}(f)-\CS_{\phi}(\hat{f}_n)\|_2 \leq r_n$, for $f \in \mathcal{O}_n$, while the {\em pointwise} deviations 
are at most $\tilde{r}_n \doteq r_n \cdot \|\phi\|_2$, by the Cauchy-Schwarz inequality.

An immediate consequence of Corollary \ref{thn:outer-ball} and the {\em continuity} of the (linear) operator $\CS_{\phi}: \CH \to \CL^2([-\eta,\eta]^d)$ is
\smallskip
\begin{corollary}
\label{thn:spectrum-ball}
{\em Under the assumptions of Theorem \ref{consistency-noisefree-known-density}, we have
\begin{equation*}
\mathbb{P}(\CS_{\phi}(f_*) \in \mathcal{Q}_n) \geq 1-\alpha,\qquad
\sup_{g\in\mathcal{Q}_n}\!\!\|g - \CS_{\phi}(f_*)\|_2\econvergealmostsurely\,0.\vspace{-1.5mm}
\end{equation*}}
\end{corollary}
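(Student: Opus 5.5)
The plan is to transfer both claims of Corollary \ref{thn:outer-ball} through the linear map $\CS_{\phi}$, using only two facts: $\mathcal{Q}_n = \CS_{\phi}(\mathcal{O}_n)$, and $\CS_{\phi}$ is a bounded linear operator from $\CH$ (with the $\mathcal{L}^2$ norm) to $\CL^2([-\eta,\eta]^d)$ with operator norm at most $1$.

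\textbf{Coverage.} Since $\CH$ inherits the $\mathcal{L}^2$ inner product, $\|\hat{f}_n - f\|_2 = \|\hat{f}_n - f\|_\CH$, so $\mathcal{Q}_n$ is exactly the image of $\mathcal{O}_n$ from \eqref{eq:outer-ball}. Therefore $f_* \in \mathcal{O}_n$ implies $\CS_{\phi}(f_*) \in \mathcal{Q}_n$. Monotonicity of probability and the first part of Corollary \ref{thn:outer-ball} then give $\mathbb{P}(\CS_{\phi}(f_*) \in \mathcal{Q}_n) \geq \mathbb{P}(f_* \in \mathcal{O}_n) \geq 1-\alpha$.

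\textbf{Bounding the operator.} For $f \in \CH$, the transform $\CF(f)$ is supported in $[-\eta,\eta]^d$, so $\CS_{\phi}(f) = \CF(f) \star \phi$ restricted to $[-\eta,\eta]^d$. Young's inequality with $\|\phi\|_1 = 1$ gives $\|\CS_{\phi}(f)\|_2 \leq \|\CF(f)\|_2 \|\phi\|_1 = \|\CF(f)\|_2$. Plancherel's theorem gives $\|\CF(f)\|_2 = \|f\|_2 = \|f\|_\CH$. Linearity of $\CF$ and of convolution makes $\CS_{\phi}$ linear. Hence $\|\CS_{\phi}(f) - \CS_{\phi}(g)\|_2 \leq \|f-g\|_\CH$ for all $f,g \in \CH$.

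\textbf{Consistency.} Any $g \in \mathcal{Q}_n$ has the form $g = \CS_{\phi}(f)$ with $f \in \mathcal{O}_n$. By the bound above, $\|g - \CS_{\phi}(f_*)\|_2 \leq \|f - f_*\|_\CH$. Taking suprema,
\[
\sup_{g\in\mathcal{Q}_n}\|g-\CS_{\phi}(f_*)\|_2 \leq \sup_{f\in\mathcal{O}_n}\|f-f_*\|_\CH .
\]
The right-hand side tends to $0$ almost surely by the second part of Corollary \ref{thn:outer-ball}. This comparison holds pointwise on the sample space, so the almost-sure convergence transfers directly.

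The only step that needs care is the identification of norms and domains: the $\mathcal{L}^2$ norm in the definition of $\mathcal{Q}_n$ must coincide with the RKHS norm used for $\mathcal{O}_n$, and the convolution must be correctly restricted to $[-\eta,\eta]^d$. Both follow from the Paley--Wiener structure, and the rest is routine. As a by-product, the same argument with Cauchy--Schwarz in place of Young yields the uniform (sup-norm) version with the extra factor $\|\phi\|_2$.
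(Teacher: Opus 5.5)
Your proposal is correct and follows the paper's route: you identify $\mathcal{Q}_n=\CS_{\phi}(\mathcal{O}_n)$, obtain coverage from the first part of Corollary~\ref{thn:outer-ball} via the event inclusion $\{f_*\in\mathcal{O}_n\}\subseteq\{\CS_{\phi}(f_*)\in\mathcal{Q}_n\}$, and obtain consistency from its second part through the bound $\|\CS_{\phi}(f)-\CS_{\phi}(g)\|_2\le\|f-g\|_\CH$, which follows from Young's inequality with $\|\phi\|_1=1$ and Plancherel. This bound is precisely the continuity of $\CS_{\phi}$ that the paper invokes, and you additionally spell out details the paper leaves implicit, namely the zero-extension of $\phi$ and $\CF(f)$, the restriction to $[-\eta,\eta]^d$, and the equality $\|\cdot\|_2=\|\cdot\|_\CH$ on $\CH$.
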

\smallskip
Therefore, our induced MiNCE confidence bands $\{\mathcal{Q}_n\}$ for the smoothed spectrum $\CS_{\phi}(f_*)$ have guaranteed {\em nonasymptotic coverage} and they are also {\em strongly uniformly consistent}.

Such confidence region for the smoothed spectrum can be easily transformed to confidence regions for the (smoothed) magnitude and phase spectra. Recall that the {\em magnitude} and {\em phase} spectra of $f \in \CL^2$ at frequency $\omega$ are defined as
\begin{align*}
(\mathcal{M}_{\phi}(f))(\omega)\!\doteq\!|(\mathcal{S}_{\phi}(f))(\omega)|,\;
(\mathcal{P}_{\phi}(f))(\omega)\!\doteq\!\text{Arg}((\mathcal{S}_{\phi}(f))(\omega)),
\end{align*}
where ``$\text{Arg}(z)$'' denotes the principal argument of $z \in \mathbb{C}$.

\begin{figure}[!t]
    \centering
    \resizebox{\columnwidth}{!}{%
        \input{figtikz2.tex}%
    }
    \vspace{-5mm}
\caption{Confidence ball $B\hspace{0.3mm}'_{\hspace{-0.3mm}n}(\omega)$ at query frequency $\omega$ around $(\mathcal{S}_{\phi}(\hat{f}_n))(\omega)$. The confidence intervals for the true magnitude and phase can be determined by the extrema of magnitudes and phases on this ball, see $\tilde{r}_{n}$ and $\tilde{\gamma}_n$.}
    \label{fig:phase-confidence}
    \vspace*{-2mm}
\end{figure}

For easier practical applicability, a pointwise version is,
\begin{equation*}
B'_n(\omega) \doteq \big\{\, z \in \BC: |\hspace{0.3mm}z-(\mathcal{S}_{\phi}(\hat{f}_n))(\omega)\hspace{0.3mm}| \leq \tilde{r}_n \hspace{0.3mm}\big\},
\end{equation*}
from which one can define confidence bands for the smoothed {\em magnitude} ($\mathcal{M}$) and {\em phase} ($\mathcal{P}$) spectra, that is, for a given $\omega$,
\begin{align*}
B_n^{\mathcal{M}}(\omega) &\doteq \big[|(\mathcal{S}_{\phi}(\hat{f}_n))(\omega)|-\tilde{r}_n, |(\mathcal{S}_{\phi}(\hat{f}_n))(\omega)|+\tilde{r}_n\big],\\[1mm]
B_n^{\mathcal{P}}(\omega) &\doteq \big[\text{Arg}((\mathcal{S}_{\phi}(\hat{f}_n)(\omega))-\tilde{\gamma}_n, \text{Arg}((\mathcal{S}_{\phi}(\hat{f}_n))(\omega))+\tilde{\gamma}_n\big],
\end{align*}
where $\tilde{\gamma}_n \doteq \arcsin\!\big(\tilde{r}_n / |(\mathcal{S}_{\phi}(\hat{f}_n))(\omega)|\big),$ if we have $\tilde{r}_n < |(\mathcal{S}_{\phi}(\hat{f}_n))(\omega)|$, see Figure \ref{fig:phase-confidence} for a geometric intuition, otherwise $\tilde{\gamma}_n = \infty$. This latter case can be explained by the fact, that if $0 \in B_n'(\omega)$, then any phase value is achievable. We can, of course, also intersect $B_n^{\mathcal{M}}(\omega)$ with $[\hspace{0.3mm}0,\infty)$ and $B_n^{\mathcal{P}}(\omega)$ with $(-\pi, \pi]$, the range of the argument function.

Finally, since $B_n^{\mathcal{M}}$ and $B_n^{\mathcal{P}}$ are outer approximations, they have {\em finite sample coverage} guarantees. Furthermore, since $\tilde{r}_n \convergealmostsurely 0$, as $n\to \infty$, they are {\em strongly uniformly consistent}.

\section{Numerical Experiments}
The presented MiNCE framework was also validated numerically. We investigated the univariate case $(d=1)$. The (bandwidth) parameter of the Paley--Wiener RKHS was set to $\eta = 30$ in the noise-free case and to $\eta = 20$ in case of measurement noises. 
The true data-generating function $f_*$ was constructed as follows: first, $20$ random inputs $\{\bar{x}_k\}_{k=1}^{20}$ were sampled using the uniform distribution on $[\hspace{0.3mm}-1,1]$. Then, $f_*(x) = \sum_{k=1}^{20} w_k k(x, \bar{x}_k, \eta_k')$ was created, where each $w_k$ followed a standard normal distribution, $k(\cdot, \cdot, \eta')$ denotes the Paley--Wiener kernel with parameter $\eta'$, and the values of $\{\eta_k'\}$ were chosen uniformly from $[\hspace{0.3mm}0.1, \eta\hspace{0.3mm}]$, hence, $\{k(\cdot,\bar{x}_k,\eta_k')\}$ were all members of the Paley--Wiener RKHS with parameter $\eta$. Finally, $f_*$ was normalized (rescaled) to have $\|f_*\|_\infty = 1$. This construction was chosen to make the problem more complex, as it (a.s.) needs infinitely many coefficients in terms of $k(\cdot,\cdot,\eta)$ to encode $f_*$. Since $|f_*(x)| = \mathcal{O}(1/|x|)$, the ratio $f_*^2/h_*$ is bounded on $\mathbb{R}$ for Cauchy input distributions, therefore A\ref{assumption-Paley-Wiener-space-multivariate} is satisfied by all of our experiments.

In the case of noise-free outputs, a sample with $n=40$ observations was generated from $f_*$. The inputs, $\{ x_k \}$, followed a Student's $t$ distribution with $1$ degree of freedom, location $\mu = 0$ and scale $\gamma = 0.8$. Figure \ref{fig:experiment1} demonstrates that the MiNCE construction leads to informative confidence bands.

In the case of noisy observations, $n=400$ measurements were created. 
The $\{ x_k \}$ inputs followed a Student $t$ distribution with $1$ degree of freedom (Cauchy), location $\mu = 0$ and scale $\gamma = 0.5$. The measurement noises $\{\varepsilon_k\}$ followed a symmetric {\em Laplacian mixture} distribution, defined as follows: 
$$
\varepsilon_k 
\sim 
\begin{cases} 
\text{Laplace}\left(1, 0.5\right) & \text{if } Z_k = 1, \\[1mm]
\text{Laplace}\left(-\frac{1}{3}, 0.5\right) & \text{if } Z_k = 2,
\end{cases}
$$
where $\{Z_k\}$ are i.i.d.\ random variables with $\mathbb{P}(Z_k=1) = \frac{1}{4}$ and $\mathbb{P}(Z_k=2) = \frac{3}{4}$; 
$\text{Laplace}\left(\mu, \sigma^2\right)$ denotes the Laplace distribution with location $\mu$ and variance $\sigma^2$, i.e., scale $\sigma/\sqrt{2}$.
Observe that $\forall k :\mathbb{E}[\varepsilon_k] = 0$. This noise model was chosen to demonstrate that MiNCE can handle nontrivial distributions.

Figure \ref{fig:experiment2} shows the 
MiNCE 
bands for various confidence levels. The required confidence ellipsoid was built by the KGP method \cite{csaji2019distribution, csaji2023improving} with $n_0=20$.  As this mixture is not symmetric about zero, the permutation group was used, which is applicable, as $\{\varepsilon_k\}$ are i.i.d., hence, exchangeable. The norm bound was computed as in Remark \ref{remark-tau-computation}, 
$\tau \doteq \min\{\tau_0^{\circ}, \tau_n\}$. Since $\tau \leq \tau_n$, Lemma \ref{norm-consistency-2} ensures that it is asymptotically not larger than $\|f_*\|^2_\CH$, which can also be observed empirically: with $n_0 = \lceil\sqrt{n}\rceil$ and $\alpha = 0.1$, 
the median of $\tau$ over $50$ runs was $4.79$, $3.20$ and $2.92$, for $n = 100$, $250$ and $500$, respectively, while the median of the uncapped $\tau_0^{\circ}$ was $13.02$, $19.75$ and $51.02$, 
illustrating the need of capping with $\varrho$.

\begin{figure}[!t]
    \centering
	\hspace*{-2mm}	
    \includegraphics[width = 1\columnwidth]{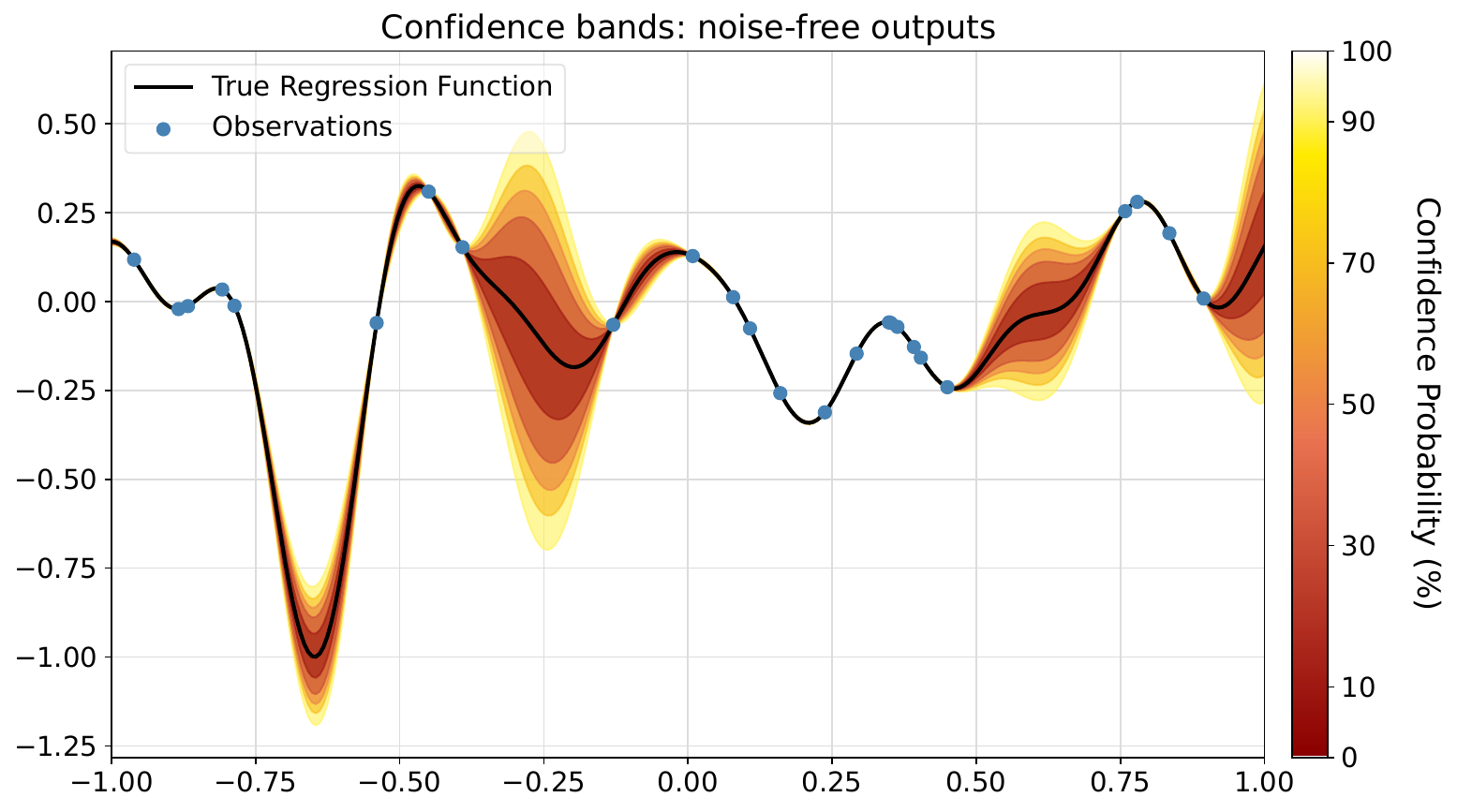} 	
    \caption{MiNCE confidence bands with levels $90\,\%$, $70\,\%$, $50\,\%$, $30\,\%$, and $10\,\%$, with $n=40$ noise-free observations and Student $t$ input distribution. The confidence levels can be interpreted by the color (scale) bar. MiNCE bands are increasing, i.e., $\CB_n^{\alpha_1} \subseteq \CB_n^{\alpha_2}$ if $\alpha_2 \leq \alpha_1$, where $\nu(\CB_n^{\alpha}) \geq 1- \alpha$.}
\label{fig:experiment1}
\end{figure}

We have also performed {\em quantitative} experiments regarding the {\em maximum errors} of the 
intervals, in the noise-free case, with a Paley--Wiener RKHS parameter $\eta = 30$, as follows.
\begin{enumerate}
    \item We have randomly generated $f_*$, which was then kept fixed, and, in each run, a new sample of inputs and (noise-free) outputs was created.
    \item An $x_0$ (query) input was chosen randomly from the distribution of the inputs (Student $t$ distribution with $1$ degree of freedom, location $\mu = 0$ and scale $\gamma = 0.8$).
    \item We have calculated the endpoints of the MiNCE confidence interval at $x_0$ for three different risk levels.
    \item The endpoints were intersected with the {\em a priori} interval induced by A\ref{assumption-Paley-Wiener-space-multivariate}, namely $|f_*(x_0)| \leq \sqrt{\varrho \, h_*(x_0)}$; if the resulting endpoints are $L_n(x_0)$ and $U_n(x_0)$, then the maximum error for query input $x_0$ is defined as $\max \{|L_n(x_0)-f_*(x_0)|, |U_n(x_0)-f_*(x_0)|\}$.
    \item This procedure was repeated $100$ times for each sample size $n$. The average and the standard deviation of the maximum errors were calculated.
\end{enumerate}

The results are summarized in Table \ref{table-diameters}. It can be observed that the mean maximum errors (and their standard deviations) shrink by more than an order of magnitude as the sample size increases, illustrating the strong
consistency of MiNCE.

Then, we have tested the case of estimating the smoothed spectrum. A sample with $n=10\,000$ noise-free observations was taken from $f_*$. The $\{x_k\}$ inputs had Student $t$ distribution with $1$ degree of freedom, $\mu=0$ and $\gamma=1$. The bandwidth parameter of the Paley--Wiener RKHS was set to $\eta = 30$.

The MiNCE confidence bands, $B_n^{\mathcal{M}}$ and $B_n^{\mathcal{P}}$, were constructed for the smoothed magnitude and the phase spectra. The triangular smoother was used with $h=1$. Figure \ref{fig:experiment5} and Figure \ref{fig:experiment6} show the results for different confidence levels, where the empirical Bernstein inequality was used to bound the norm of $f_*$, see 
\eqref{eq:emp-bernstein}. It can be observed that constructing confidence bands for the smoothed magnitude spectrum is an easier task than estimating the phase spectrum. On the other hand, both are much harder than estimating the signal in the spatial-domain, and require significantly more observations.
\begin{figure}[!t]
    \centering
	\hspace*{-2mm}	
    \includegraphics[width = 1\columnwidth]{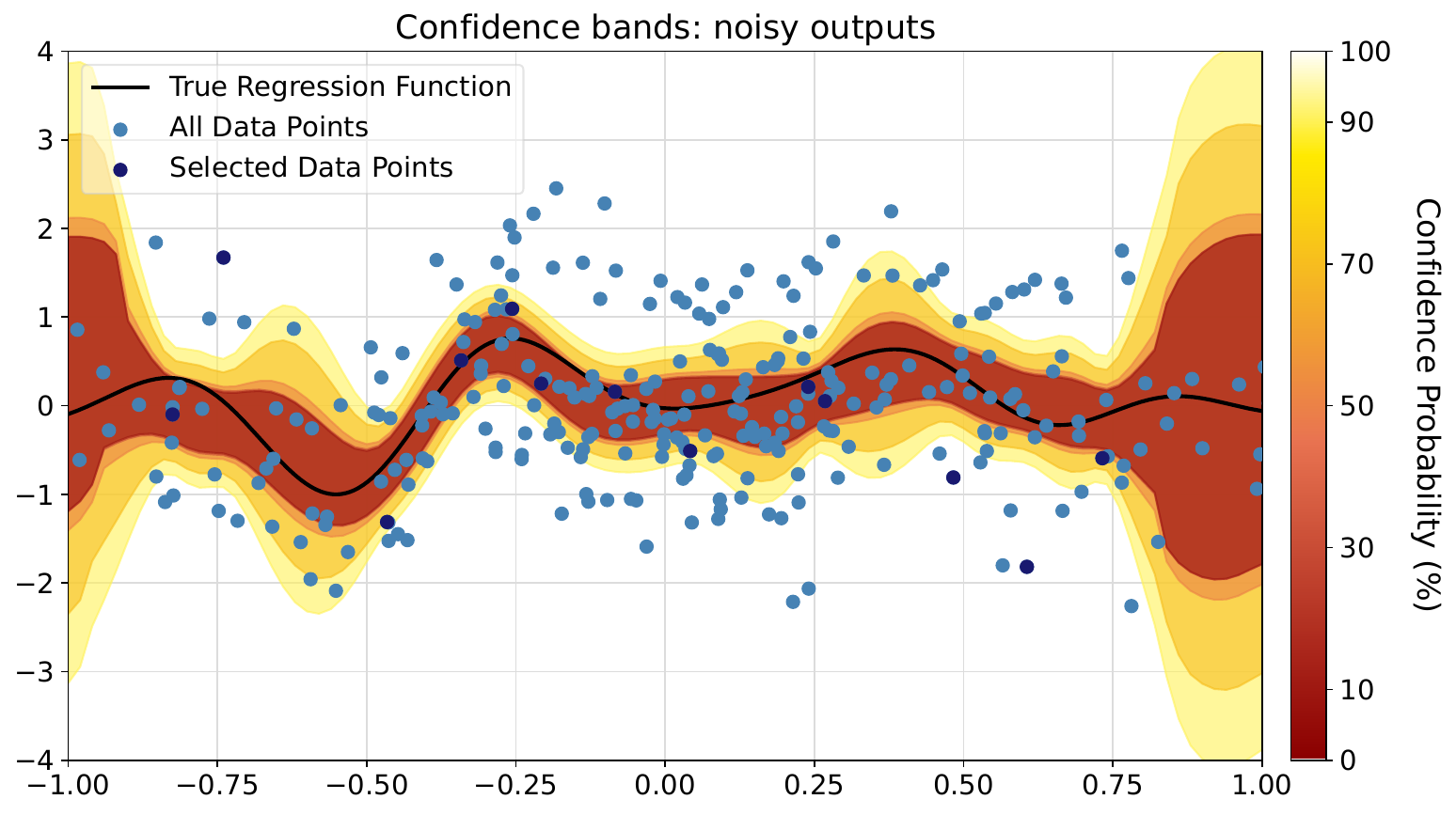} 	
    \caption{MiNCE confidence bands for various confidence levels between $90\,\%$ and $10\,\%$, see the color bar, with $n=400$ observations corrupted by Laplacian mixture measurement noises and having Student $t$ input distribution. The KGP confidence ellipsoid was created for the $n_0=20$ dark blue points.}
\label{fig:experiment2}
\end{figure}

{\renewcommand{\arraystretch}{1.3}

\begin{table}[!b]
\vspace*{-2mm}
\caption{Spatial-Domain: Averages and standard deviations of the\\[0mm]
maximum errors at random query inputs, in the\\[0mm]
noise-free case, for various risk levels.}\vspace{-2mm}
\centering
\begin{tabular}{|c|c||c|c|c|}
\cline{1-5}
{  Max error} & {  $n$} & {  $\alpha = 0.1$} & {  $\alpha = 0.5$} & {  $\alpha = 0.9$}\\ \hline\hline
avg (std) & 25  & 0.74 (0.56) & 0.71 (0.55) & 0.66 (0.53)\\ \hline
avg (std) & 50  & 0.21 (0.29) & 0.20 (0.27) & 0.18 (0.24)\\ \hline
avg (std) & 100 & 0.11 (0.19) & 0.10 (0.19) & 0.10 (0.17)\\ \hline
avg (std) & 200 & 0.05 (0.11) & 0.04 (0.10) & 0.04 (0.10)\\ \hline
avg (std) & 400 & 0.02 (0.07) & 0.02 (0.07) & 0.02 (0.06)\\ \hline
\end{tabular}
\label{table-diameters}
\vspace*{-3mm}
\end{table}
}

{\renewcommand{\arraystretch}{1.3}
\begin{table}[!b]
\vspace*{1.5mm}
\caption{Frequency-Domain: Averages and standard deviations of the relative
maximum errors of the magnitude spectrum bands for various sample sizes and risk levels.\vspace{-2mm}}
\centering
\begin{tabular}{|c|c|c||c||c|c|c|}
\hline
Rel. magn. & $n$ & $\alpha=0.1$ & $\alpha=0.5$ & $\alpha=0.9$\\ \hline\hline
avg (std) & 1000 & 2.20 (0.55) & 1.61 (0.42) & 1.28 (0.39)\\ \hline
avg (std) & 2000 & 1.67 (0.43) & 1.24 (0.36) & 1.00 (0.36)\\ \hline
avg (std) & 4000 & 1.24 (0.32) & 0.92 (0.28) & 0.74 (0.28)\\ \hline
avg (std) & 8000 & 0.97 (0.24) & 0.74 (0.24) & 0.61 (0.25)\\ \hline
avg (std) & 16000 & 0.75 (0.21) & 0.57 (0.20) & 0.47 (0.21)\\ \hline
\end{tabular}
\label{table-4}
\end{table}
}

Quantitative experiments were performed to evaluate the confidence bands for the {\em smoothed magnitude spectrum}, as well. The Paley--Wiener parameter was set to $\eta = 30$. The data generation followed the same ideas as in the case of the spatial-domain, but now the {\em relative} maximum errors were computed, defined for a query frequency $\omega_0$, which was chosen {\em uniformly} from $[-\eta,\eta]$, as $\max\{|L_n^{\mathcal{M}}(\omega_0)-\mathcal{M}_\phi(f_*)(\omega_0)|, |U_n^{\mathcal{M}}(\omega_0)-\mathcal{M}_\phi(f_*)(\omega_0)|\}/\max_{\omega \in [-\eta,\eta]} \mathcal{M}_\phi(f_*)(\omega)$, where $L_n^{\mathcal{M}}(\omega_0)$ and $U_n^{\mathcal{M}}(\omega_0)$ are the two endpoints of the 
interval $B_n^{\mathcal{M}}(\omega_0)$.

Table \ref{table-4} summarizes the results, where each row displays the averages and standard deviations of the 
maximum errors for $100$ random experiments, where a new $f_*$ and new observations were generated for each experiment. It demonstrates that the average maximum errors (and the variances) of the intervals decrease, as the sample size increases; though the required sample size is large, as the spectrum is simultaneously estimated on the whole domain, not just at a given frequency.
\vspace*{-4mm}

\begin{figure}[!t]
    \centering
	\hspace*{-2mm}	
	\includegraphics[width = 1\columnwidth]{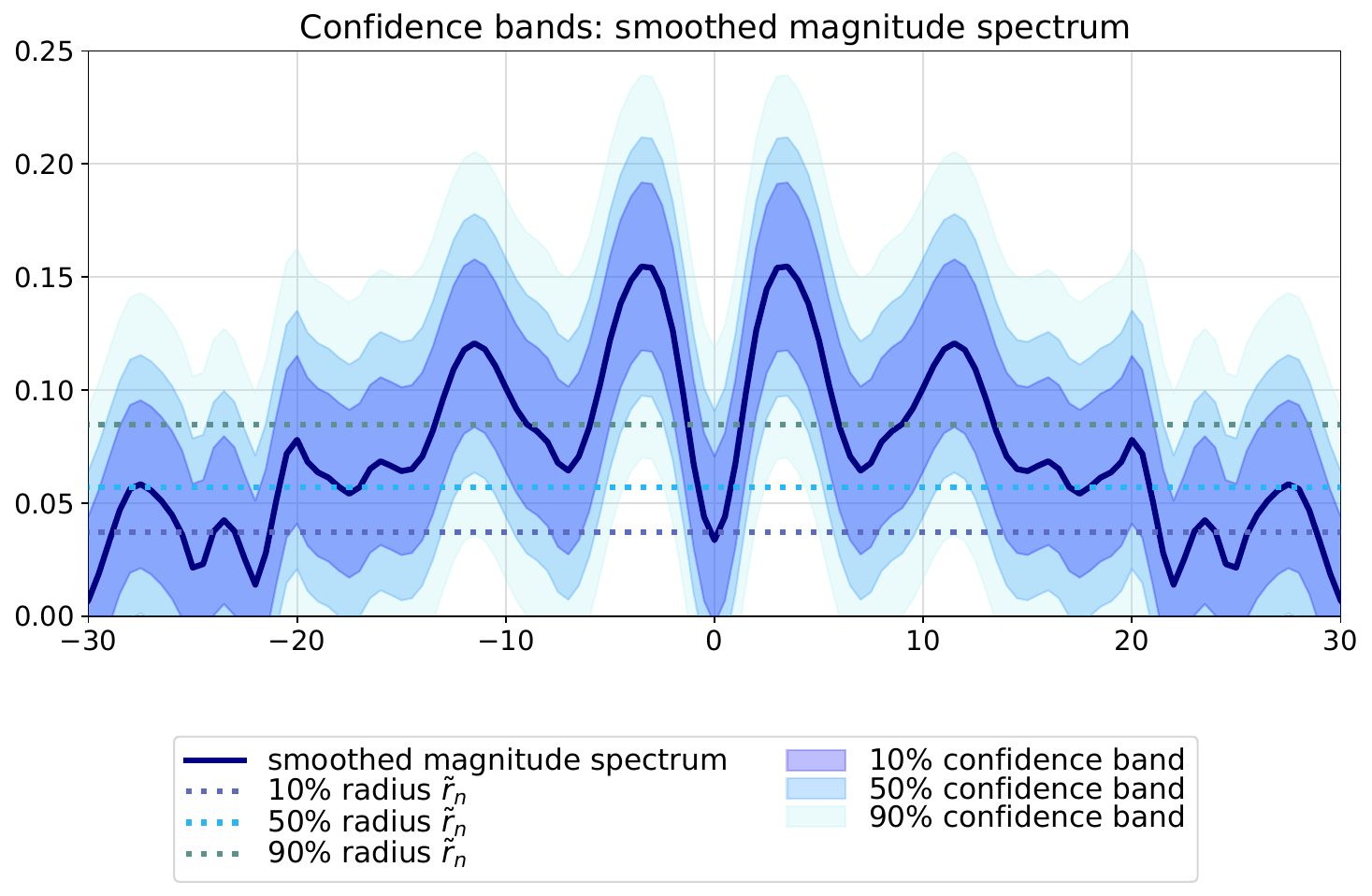} 	
    \caption{MiNCE confidence bands for the smoothed magnitude spectrum with $n=10\,000$ noise-free observations and using the triangular smoother.}
    \label{fig:experiment5}
\vspace*{-2mm}
\end{figure}

\section{Conclusions}
In this paper, we analyzed the {\em Minimum-Norm Confidence Envelope} (MiNCE) framework, a {\em nonparametric} approach to constructing {\em simultaneous} confidence bands with {\em nonasymptotic coverage} for {\em band-limited} functions from a finite sample of random input-output pairs, assuming a priori knowledge of the sampling distribution of the inputs. While the finite-sample coverage guarantees of these confidence envelopes were established previously, our main contribution was to show that they are also {\em strongly uniformly consistent}: as the sample size grows, the bands converge almost surely, uniformly over the entire domain, to the true regression function. We established this both for the noise-free case, where the target function is directly observed at random inputs, and for the noisy case, under the mild condition that a confidence ellipsoid for the (noiseless) outputs can be constructed at a subset of the observed sample inputs, a requirement satisfied, for example, by the Kernel Gradient Perturbation (KGP) method.

We further extended the MiNCE framework from the {\em spatial} to the {\em frequency} domain, building {\em nonparametric}, {\em simultaneous} confidence tubes for the {\em smoothed spectrum} of the target function, which induce bands for the magnitude and phase spectra. We proved that these regions inherit both the {\em nonasymptotic coverage} and 
the {\em strong uniform consistency} of the spatial regions. Experiments in both nonparametric regression and spectral estimation confirmed our 
results, illustrating the systematic contraction of the regions as $n$ grows.

A natural direction for future work is to relax the assumption that the sampling distribution of the inputs is known, and to complement consistency with convergence rates; both are directions we are actively pursuing. More broadly, since our consistency results rely primarily on a high-probability upper bound for the target function's kernel norm rather than on properties specific to Paley--Wiener spaces, extending the framework to other RKHS classes is also a promising avenue.

\begin{figure}[!t]
    \centering
	\hspace*{-2mm}	
	\includegraphics[width = 1\columnwidth]{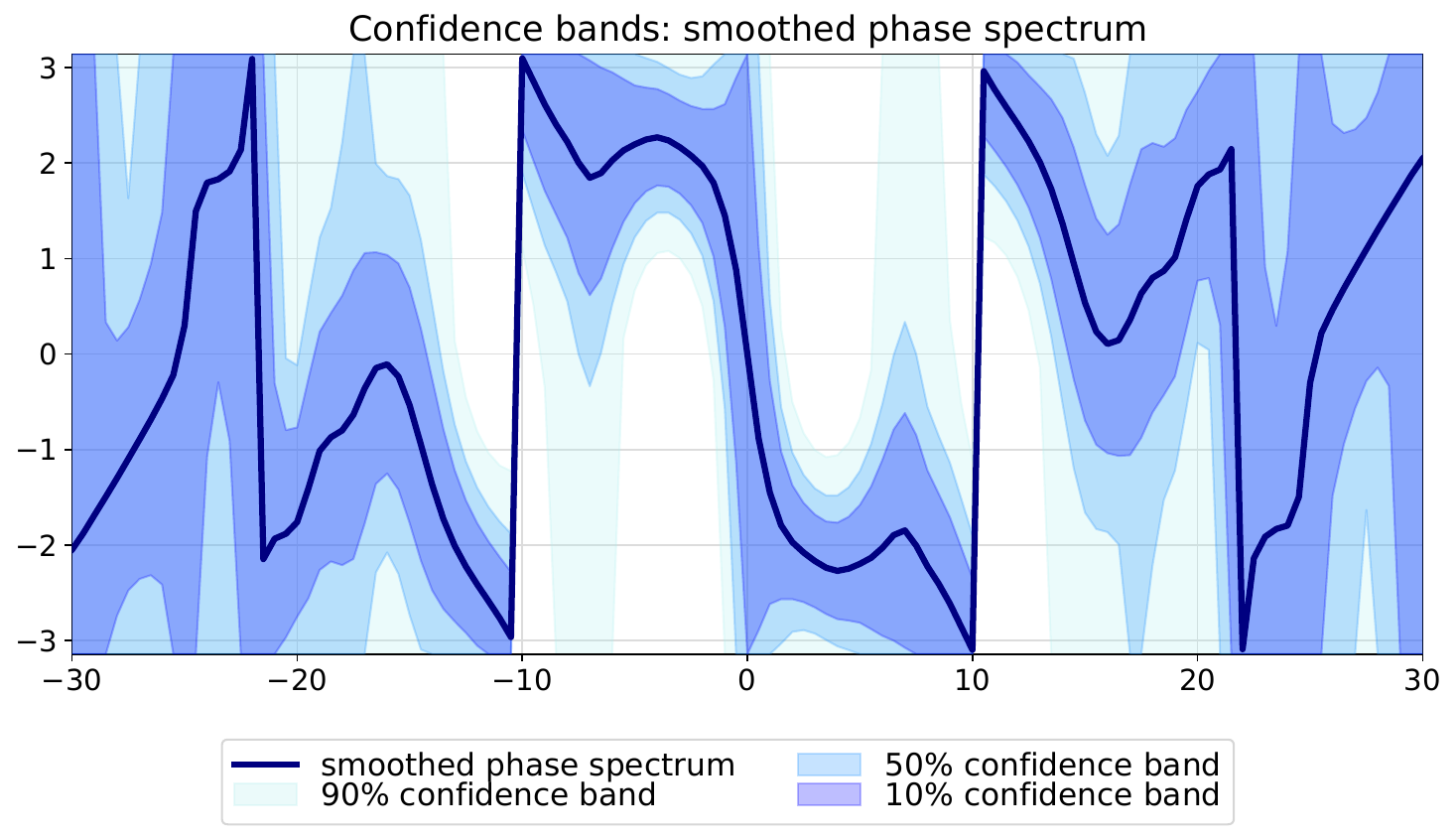} 	
    \vspace{-3.5mm}
    \caption{MiNCE confidence bands for the smoothed phase spectrum with $n=10\,000$ noise-free observations and using the triangular smoother. The phase band is bounded only where the smoothed magnitude (Figure \ref{fig:experiment5}) exceeds the corresponding radius $\tilde{r}_n$; below that threshold, the phase is unconstrained.}
\label{fig:experiment6}
\vspace*{-2mm}
\end{figure}

\vspace*{-1mm}
\bibliographystyle{ieeetr}
\bibliography{references}

\appendices
\section{Proof of Lemma \ref{lemma-norm-estimation-known-density-noisefree}}
\label{appendix-lemma1-proof}
\begin{proof}
Let us introduce the notation $$R \defeq \frac{1}{n} \sum_{k=1}^n \frac{y_k^2}{h_*(x_k)} = \frac{1}{n} \sum_{k=1}^n \frac{f_*^2(x_k)}{h_*(x_k)}.$$ We have
$\mathbb{E}[R] = \norm{f_*}_2^2.$ Then, from Hoeffding's inequality,
$$\mathbb{P}(R-\mathbb{E}[R] \leq -t) \leq \mbox{exp} (-2n t^2 / \varrho^2).$$
By using the complement rule, we obtain
$$\mathbb{P} (\mathbb{E}[R] < R+t) \geq 1-\mbox{exp}(-2n t^2 / \varrho^2).$$
We would like to choose a threshold $t>0$ such that
$$1-\alpha \leq \mathbb{P} (\mathbb{E}[R] < R+t).$$
The above inequality is satisfied, if we choose $t$ as
$$1-\alpha \leq 1-\mbox{exp}(-2n t^2 / \varrho^2)\, \Longrightarrow\, \mbox{exp}(-2n t^2 / \varrho^2) \leq \alpha.$$
By taking the natural logarithm on each side:
$$-2n t^2 / \varrho^2 \leq \mbox{ln}(\alpha)\, \Longrightarrow \,t^2 \geq \frac{\mbox{ln} (\alpha) \varrho^2}{-2n} \Rightarrow t\geq \varrho \cdot \sqrt{\frac{\mbox{ln} (\alpha)}{-2n}}.$$
By choosing $t^* \defeq \varrho \cdot \sqrt{{\mbox{ln} (1/\alpha)}/{(2n)}}$, we get that
$$\mathbb{P}(\norm{f_*}_\mathcal{H}^2 \geq R + t^*) \leq \alpha,$$
which completes the proof.
\end{proof}

\section{Proof of Theorem \ref{theorem-consistency-abstract-endpoint-based}}
\label{appendix-abst-to-band-consistency-proof}
\begin{proof}
Let us denote the bound on the kernel by $\zeta$, that is 
$$
\zeta\, \defeq\, \sup_{x\in\mathbb{R}^d}k(x,x)\,<\,\infty.
$$
For any $f\in\mathcal{H}$ and any $x\in\mathbb{R}^d$, by applying the reproducing property and the Cauchy-Schwarz inequality, we get
\begin{equation*}
\begin{aligned}
\|f\|_{\infty} &= \sup_{x\in\mathbb{R}^d}|f(x)|
= \sup_{x\in\mathbb{R}^d} \bigl|\bigl\langle f,\,k(\cdot,x)\bigr\rangle_{\mathcal{H}}\bigr|\\
&\le\,\|f\|_{\mathcal{H}}\;\|k(\cdot,x)\|_{\mathcal{H}}
=\|f\|_{\mathcal{H}}\;\sqrt{k(x,x)} = \sqrt{\zeta}\,\|f\|_{\mathcal{H}}.
\end{aligned}
\end{equation*}
This implies that, 
as $n\to\infty$, we have
$$
  \sup_{f\in\mathcal{C}_n}\|f - f_*\|_{\infty}
  \;\le\;\sqrt{\zeta}\sup_{f\in\mathcal{C}_n}\|f - f_*\|_{\mathcal{H}}
  \,\convergealmostsurely\;0.
$$
Further, for all $x \in \mathbb{R}^d$, by the construction of $L_n(x)$,
$$
  \bigl|L_n(x) - f_*(x)\bigr|
  = \bigl|\inf_{f\in\mathcal{C}_n}\! f(x) - f_*(x)\bigr|
  \le \sup_{f\in\mathcal{C}_n}\!|f(x)-f_*(x)|,
$$
and similarly for $U_n(x)$.  Taking ``$\sup$'' over all $x\in\mathbb{R}^d$ yields
$$
  \sup_{x\in\mathbb{R}^d}\bigl|L_n(x) - f_*(x)\bigr|
  \;\le\;\sup_{f\in\mathcal{C}_n}\!\|f-f_*\|_{\infty}
  \,\convergealmostsurely\;0,
$$
as $n\to \infty$, and similarly for $\sup_{x}|U_n(x) - f_*(x)|$.
\end{proof}

\section{Proof of Lemma \ref{parallel-perpendicular-lemma}}
\label{appendix-parallel-perpendicular-lemma}
\begin{proof}
First, we prove that for all $i \in [n]: f'(x_i) = f'_{\parallel}(x_i)$. By using the reproducing property of the RKHS, we have
\begin{equation*}
\begin{aligned}
f'(x_i) &=  \langle f', k(\cdot, x_i)\rangle_{\CH} = \langle f'_{\parallel}+f'_{\bot}, k(\cdot, x_i)\rangle_{\CH}\\
& = \underbrace{\langle f'_{\parallel}, k(\cdot, x_i)\rangle_{\CH}}_{f'_{\parallel}(x_i)} + \underbrace{\langle f'_{\bot}, k(\cdot, x_i)\rangle_{\CH}}_{0}\\
& = f'_{\parallel}(x_i),
\end{aligned}
\end{equation*}
where $\langle f'_{\bot}, k(\cdot, x_i)\rangle = 0$, since $k(\cdot, x_i) \in \mathcal{H}_n$.
Therefore, for all $f' \in \mathcal{C}^*_n$, we have $f'_{\parallel}(x_i) = f_*(x_i)$, for all $i \in [n]$. In other words, since $f'_{\parallel} \in \CH_n$, there are coefficients $\{\alpha_k\}$, such that\vspace{-1.5mm}
$$f_*(x_i) = f'_{\parallel}(x_i) = \sum_{k=1}^n \alpha_k k(x_i,x_k),$$
for $i \in [n]$. However, there is only one function in $\CH_n$ which can guarantee this property, namely $\hat{f}_n$. To see this, notice that the coefficients must satisfy
equation $K \alpha = y$, where $y\tr = [y_1, ...., y_n]$ and $K$ is the Gram matrix. Therefore, the unique solution is $\alpha = K^{-1} y$, since $K$ is (a.s.) invertible. This function, by its def., is the (min-norm) interpolant $\hat{f}_n$.
\end{proof}

\section{Proof of Lemma \ref{norm-consistency-2}}
\label{appendix-norm-consistency-2}
\begin{proof}
Bound $\tau_{n}$ is the sum of two terms:
\vspace{-1mm}
$$\tau_{n,1} \defeq \frac{1}{n_0} \sum_{k=1}^{n_0}\! \left(\frac{\max\{\nu_{n,k}^2, \mu_{n,k}^2\}}{h_*(x_k)} \land \varrho\right)\hspace{-0.8mm},\;
\tau_{n,2} \defeq \varrho \sqrt{\frac{\ln(1 / \alpha)}{2n_0}}.$$
We clearly have $\tau_{n,2} \to 0,$ since $n_0 \to \infty$ as $n \to \infty$. Thus it is enough to show $\tau_{n,1} \convergealmostsurely \kappa_*$.
Recall from \eqref{noise-free-density-convergence} that $\sigma_{n} \defeq \frac{1}{n_0}\sum_{k=1}^{n_0} f_*^2(x_k)/h_*(x_k) \convergealmostsurely \kappa_*$; note that, by A\ref{assumption-Paley-Wiener-space-multivariate}, in $\sigma_n$ the cap is inactive $(f_*^2(x_k)/h_*(x_k) \land \varrho = f_*^2(x_k)/h_*(x_k))$. Let
$$\delta_{n} \defeq \max_{k \in [n_0]} \max \{|\nu_{n,k} - f_*(x_k)|, |\mu_{n,k} - f_*(x_k)|\}.$$
For $z \in \ConfEll$, applying the reproducing property and the Cauchy-Schwarz inequality to $\hat{f}_{n, z-z_*}$, the minimum-norm interpolant of the residual $z - z_*$, gives $|z_k - f_*(x_k)|^2 = |\hat{f}_{n,z-z_*}(x_k)|^2 \leq k(x_k,x_k)\, (z-z_*)\tr K_{n_0}^{-1}(z-z_*)$. As $\nu_{n,k}$ and $\mu_{n,k}$ are attained on $\ConfEll$ and $\sup_x k(x,x) \leq \zeta < \infty$, we get $\delta_n^2 \leq \zeta \sup_{z \in \ConfEll}(z-z_*)\tr K_{n_0}^{-1}(z-z_*) \convergealmostsurely 0$, by \eqref{eq:A5-mahalanobis}.

By A\ref{assumption-Paley-Wiener-space-multivariate}, we have $|f_*(x_k)| \leq \sqrt{\varrho\, h_*(x_k)}$. Moreover, $|a-b| \leq \delta$ implies $|a^2 - b^2| \leq \delta(2|b|+\delta)$, the map $s \mapsto s \land \varrho$ is $1$-Lipschitz, and all terms are in $[\hspace{0.3mm}0, \varrho\hspace{0.3mm}]$. Hence, by the triangle inequality, $|\tau_{n,1}-\sigma_{n}|$ is at most the average of the absolute values of its $k$th terms, each of which is bounded by
$$\min\left\{\frac{2\sqrt{\varrho}\;\delta_{n}}{\sqrt{h_*(x_k)}} + \frac{\delta_{n}^2}{h_*(x_k)},\; \varrho\right\}\!.$$
Splitting the sum in $|\tau_{n,1}-\sigma_{n}|$ according to $h_*(x_k) \geq t$ or $h_*(x_k) < t$, we get for all $t > 0$,
$$|\tau_{n,1}-\sigma_{n}| \leq \frac{2\sqrt{\varrho}\,\delta_{n}}{\sqrt{t}} + \frac{\delta_{n}^2}{t} + \frac{\varrho}{n_0} \big|\{k \in [n_0] : h_*(x_k) < t\}\big|.$$
As $\delta_n \convergealmostsurely 0$ and, by the SLLN, the last term tends to $\varrho\, \mathbb{P}(h_*(x) < t)$, we have (a.s.) that $\limsup_n |\tau_{n,1}-\sigma_{n}| \leq \varrho\, \mathbb{P}(h_*(x) < t)$. Applying this for $t \defeq 1/m$, $m \in \mathbb{N}$, and using that $h_* > 0$ (A\ref{assumption-positive-input-density}), thus $\mathbb{P}(h_*(x) < 1/m) \to 0$, we obtain $|\tau_{n,1}-\sigma_{n}|\convergealmostsurely 0$. Finally, combining the results gives
$$
|\tau_n - \norm{f_*}_\CH^2| \leq |\tau_{n,1} - \sigma_{n}| + |\hspace{0.3mm}\sigma_{n}-\norm{f_*}_\CH^2|+\tau_{n,2}\convergealmostsurely 0,
$$
immediately implying that $\tau_n \convergealmostsurely \norm{f_*}_\CH^2$, as $n \to \infty$.\end{proof}

\section{Proof of Lemma \ref{ellipsoid-norm-diff}}
\label{appendix-ellipsoid-norm-diff}
\begin{proof}
Let $\CH_{n_0} \defeq \text{span}\{\hspace{0.3mm} k(\cdot,x_k): k \in [n_0] \hspace{0.3mm}\}$. Since $\CH_{n_0}$ is a finite dimensional subspace of $\CH$, it is closed, and we have
	\begin{equation*}
 			f = f_{\parallel} + f_{\bot}, \qquad \text{and} \qquad f' = f'_{\parallel} + f'_{\bot},
	\end{equation*}
by the Hilbert projection theorem, where $f_{\parallel}, f'_{\parallel} \in \CH_{n_0}$ are the minimum-norm interpolants of $z$ and $z'$, respectively (see the Proof of Lemma \ref{parallel-perpendicular-lemma} for  details),
and $f_{\bot}, f'_{\bot} \in \CH_{n_0}^{\bot}$. Then
 	\begin{equation*}
 		\|f_{\parallel}\|_\mathcal{H}^2 = z\tr K_{n_0}^{-1} z, \quad \text{and} \quad \|f'_{\parallel}\|_\mathcal{H}^2 = (z')\tr K_{n_0}^{-1} z',
 	\end{equation*}
as we discussed in Subsection \ref{sec:min-norm-int}. As $f_\parallel-f'_\parallel\in \CH_{n_0}$ and it is the unique function from $\CH_{n_0}$ that interpolates $z-z'$, it is the minimum-norm interpolant of $z-z'$ and hence 
\begin{equation}
    \label{eq:min-norm-dist}
	\|f_{\parallel} - f'_{\parallel}\|_\mathcal{H}^2 = (z - z')\tr K_{n_0}^{-1} (z - z').
 \end{equation}
Because $f_{\parallel} \perp f_{\bot}$ and $f'_{\parallel} \perp f'_{\bot}$, we get
 		\begin{align*}
 			\|f\|_\mathcal{H}^2 = \|f_{\parallel}\|_\mathcal{H}^2 + \|f_{\bot}\|_\mathcal{H}^2 &\;\Rightarrow\; \|f_{\bot}\|_\mathcal{H}^2 = \|f\|_\mathcal{H}^2 - z\tr K_{n_0}^{-1} z,
 		\end{align*}
and similarly for $\|f'\|_\mathcal{H}^2$, we have
$$
\|f'_{\bot}\|_\mathcal{H}^2 = \|f'\|_\mathcal{H}^2 - (z')\tr K_{n_0}^{-1} z'.
$$
By exploiting that $f \in \mathcal{D}^{*}_{n}$, we have $\|f\|_\mathcal{H}^2 \le \tau^*_n$, hence
\begin{equation}
\label{sec:f_bot_bound}
\|f_{\bot}\|_\mathcal{H} \le \sqrt{\tau^*_n - z\tr K_{n_0}^{-1} z},
\end{equation}
and a similar norm bound holds for $f'_{\bot}$, since we also have $\|f'\|_\mathcal{H}^2 \le \tau^*_n$. Combining \eqref{eq:min-norm-dist} and \eqref{sec:f_bot_bound} with the fact that
 \begin{align*}
 	\|f - f'\|_\mathcal{H}^2 &= \|f_{\parallel} - f'_{\parallel}\|_\mathcal{H}^2 + \|f_{\bot} - f'_{\bot}\|_\mathcal{H}^2\\
    &\leq  \|f_{\parallel} - f'_{\parallel}\|_\mathcal{H}^2 + (\|f_{\bot} \|_\mathcal{H} + \|f'_{\bot}\|_\mathcal{H})^2,
\end{align*}
implies the statement of the lemma.
\end{proof}

\section{Proof of Lemma \ref{ellipsoid-norm-convergence}}
\label{appendix-ellipsoid-norm-convergence}
\begin{proof}
By definition, $\ConfEllStar = \ConfEll \cup \{z_*\}$, therefore
\begin{equation*}
    \sup_{z \in \mathcal{E}_{n_0}^*} z\tr K_{n_0}^{-1} z = \max \Big\{ z_*\tr K_{n_0}^{-1} z_*, \, \sup_{z \in \mathcal{E}_{n_0}^n} z\tr K_{n_0}^{-1} z \Big\}.
\end{equation*}
We analyze the asymptotic behavior of both terms separately.

First, we consider the $z_*$ part. As $z_*\tr K_{n_0}^{-1} z_*$ is the kernel norm of the minimum-norm interpolant of the noiseless outputs, it converges (a.s.) to $\kappa_*$, as discussed in Remark \ref{remark-min-interp-norm-conv}.

Now, we consider the part with the confidence ellipsoid. Recall that $\tau_n$ is the solution of \eqref{noisy-norm-bound-with-intervals}.
Consider $z \in \mathcal{E}_{n_0}^n$,  then $z = z_* + (z - z_*)$. Expanding the quadratic form yields,
\begin{equation}
    \label{eq:exp-quadratic}
    z\tr K_{n_0}^{-1} z = 
\end{equation}
\begin{equation*}
    = z_*\tr K_{n_0}^{-1} z_* + 2 z_*\tr K_{n_0}^{-1} (z - z_*) + (z - z_*)\tr K_{n_0}^{-1} (z - z_*).
\end{equation*}
Then, by taking supremum over the confidence ellipsoid $\ConfEll$ and applying the triangle inequality, we get
\begin{equation*}
    \Big| \sup_{z \in \mathcal{E}_{n_0}^n}\! z\tr K_{n_0}^{-1} z - z_*\tr K_{n_0}^{-1} z_* \Big| \le  
    \vspace{-1mm}
\end{equation*}
\begin{equation*}
\leq 2\! \sup_{z \in \mathcal{E}_{n_0}^n}\! \big| z_*\tr K_{n_0}^{-1} (z - z_*) \big| +\! \sup_{z \in \mathcal{E}_{n_0}^n}\! (z - z_*)\tr K_{n_0}^{-1} (z - z_*).
\end{equation*}
By the Cauchy-Schwarz inequality, we have
\begin{equation*}
    \big| z_*\tr K_{n_0}^{-1} (z - z_*) \big| \le \sqrt{z_*\tr K_{n_0}^{-1} z_*} \sqrt{(z - z_*)\tr K_{n_0}^{-1} (z - z_*)}.
\end{equation*}
Using this and \eqref{eq:A5-mahalanobis}, we get
\begin{equation*}
    \mathbb{P}\bigg(\! \limsup_{n\to\infty} \sup_{z \in \mathcal{E}_{n_0}^n}\!\! \big| z_*\tr K_{n_0}^{-1} (z - z_*)\big| \leq \sqrt{\kappa_*} \cdot 0  = 0 \bigg) =\, 1,
\end{equation*}    
thus the cross-term vanishes (a.s.). Then, combining the results
\begin{equation*}
    \sup_{z \in \mathcal{E}_{n_0}^n}\! z\tr K_{n_0}^{-1} z \convergealmostsurely \kappa_*.
\end{equation*}

Since both arguments inside the maximum converge to $\kappa_*$, the overall supremum converges to $\kappa_*$, almost surely.

Furthermore, because $z_* \in \ConfEllStar$ by definition, we have 
$$
\inf_{z \in \ConfEllStar}\! z\tr K_{n_0}^{-1} z\leq
z_*\tr K_{n_0}^{-1} z_*.
$$
Then, by using again the decomposition of $z\tr K_{n_0}^{-1} z$ given by \eqref{eq:exp-quadratic}, dropping the nonnegative $(z - z_*)\tr K_{n_0}^{-1} (z - z_*)$ term and using the Cauchy-Schwarz inequality for the cross-product, but this time to get a lower bound, we get for all $z \in \mathcal{E}_{n_0}^*$:
\begin{equation*}
    z\tr K_{n_0}^{-1} z \ge z_*\tr K_{n_0}^{-1} z_* - 2 \sqrt{z_*\tr K_{n_0}^{-1} z_*\! \cdot\! (z - z_*)\tr K_{n_0}^{-1} (z - z_*)}.
\end{equation*}
Then, by taking infimum over $z\in\ConfEllStar$, we obtain
\begin{equation*}
    z_*\tr K_{n_0}^{-1} z_* - 2 \sqrt{z_*\tr K_{n_0}^{-1} z_*} \cdot d_n \le \inf_{z \in \mathcal{E}_{n_0}^*}\! z\tr K_{n_0}^{-1} z \le z_*\tr K_{n_0}^{-1} z_*,
    \vspace{-2mm}
\end{equation*}
where $d_n \doteq \sup_{z \in \mathcal{E}_{n_0}^*}\! \big((z - z_*)\tr K_{n_0}^{-1} (z - z_*)\big)^{\frac{1}{2}}$ represents the maximum Mahalanobis radius of the ellipsoid. 
Under A\ref{assumption-iid-multivariate}, A\ref{assumption-positive-input-density} and A\ref{assumption-Paley-Wiener-space-multivariate}, $z_*\tr K_{n_0}^{-1} z_* \convergealmostsurely \kappa_*$, see Remark \ref{remark-min-interp-norm-conv}. Further, under A\ref{assumption-ellipsoid-guarantee} and A\ref{assumption-ellipsoid-consistency},
$d_n \xrightarrow{\text{a.s.}} 0$, as $n \to \infty$, by \eqref{eq:A5-mahalanobis}.

Hence, we have sandwiched the infimum of $z\tr K_{n_0}^{-1} z$ over the extended ellipsoid $\ConfEllStar$ between two terms which both (a.s.) converge to $\kappa_*$, implying that the infimum also does.
\end{proof}

\end{document}

%% file: figtikz2.tex
\begin{tikzpicture}[scale=2, >=stealth]
\usetikzlibrary{calc}

    \pgfmathsetmacro{\xz}{4}
    \pgfmathsetmacro{\yz}{2}
    \pgfmathsetmacro{\r}{1}
    \pgfmathsetmacro{\squarelen}{0.3}
    
    \coordinate (O) at (0, 0);
    \coordinate (Z) at (\xz, \yz);

    \pgfmathsetmacro{\discriminant}{\xz*\xz + \yz*\yz - \r*\r} 
    \pgfmathsetmacro{\mdenom}{\xz*\xz - \r*\r} 
    \pgfmathsetmacro{\mnum}{\xz*\yz} 
    \pgfmathsetmacro{\msqrtterm}{\r*sqrt(\discriminant)} 
    
    \pgfmathsetmacro{\mone}{(\mnum + \msqrtterm) / \mdenom}
    \pgfmathsetmacro{\mtwo}{(\mnum - \msqrtterm) / \mdenom}
    
    \pgfmathsetmacro{\xtwo}{3.5 / \mone} 

    \pgfmathsetmacro{\phasezdeg}{atan(\yz/\xz)}
    \pgfmathsetmacro{\angletwodeg}{atan(\mtwo)}

    \pgfmathsetmacro{\angleperp}{\angletwodeg - 90}
    \coordinate (T) at ($(Z) + (\angleperp:\r)$);

    \fill[blue!15!white, dotted, line width=1.5pt] (Z) circle (\r);
    \draw[help lines, dashed, step=0.5, very thin, gray!50!white] (0, 0) grid (5.5, 3.5);
    \draw[->, line width=1.5pt] (0, 0) -- (5.5, 0);
    \draw[->, line width=1.5pt] (0, 0) -- (0, 3.5);

    \node at (2.75, 3.7) {Confidence ball at a query frequency with magnitude and phase bounds};

    \draw[line width=1pt, gray] (O) -- (5.5, {5.5*\mtwo}) node[right] {};
    \draw[line width=1pt, gray] (O) -- (\xtwo, 3.5) node[right] {};
    
    \draw[line width=1pt, purple!50!gray] (O) -- (Z);
    
    \pgfmathsetmacro{\distOZ}{sqrt(\xz*\xz + \yz*\yz)}
    \pgfmathsetmacro{\ratio}{\r / \distOZ}
    \coordinate (M_boundary) at ($(Z)!\ratio!(O)$);
    \draw[line width=3pt, dashed, blue!80!gray] (M_boundary) -- (Z);
    \node[rotate=\phasezdeg, anchor=south, inner sep=4pt] at ($(M_boundary)!0.5!(Z)$) {$\mathbf{\tilde{r}_n}$};
    
    \draw[line width=1pt,gray] (Z) -- (T);
    \fill[red!50!gray] (Z) circle (2.5pt);


    \node at (1.96, 0.72) {$\mathbf{\tilde{\gamma}_n}$};
    \node[purple!50!gray, rotate=\phasezdeg] at (2.3, 1.3) {magnitude};

    \node at (4.21, 2.22) {$\mathbf{(\mathcal{S}_{\phi}(\hat{f}_n))(\omega)}$};
    
    \node at (5.02, 2.71) {$B\hspace{0.3mm}'_{\hspace{-0.3mm}n}(\omega)$};
       
    \node at (3.8, {4.0*\mtwo - 0.25}) {$\mathbf{t_1}$};
    \node at (\xtwo - 0.5, {(\xtwo - 0.3)*\mone + 0.1}) {$\mathbf{t_2}$};
    \node[align=center] at (2.75, -0.15) {Real axis};
    \node[align=center, rotate= 90] at (-0.15, 1.75) {Imaginary axis};
    \coordinate (T1) at ($(T) - (\angletwodeg:\squarelen)$);
    \coordinate (T2) at ($(T) - (\angleperp:\squarelen)$);
    \coordinate (T_corner) at ($(T1) + (T2) - (T)$);
    \draw[line width=1pt, gray] (T1) -- (T_corner) -- (T2); 

\pgfmathsetmacro{\arcradius}{2.25} 
\pgfmathsetmacro{\gammadeg}{\phasezdeg - \angletwodeg} 
\draw[purple!50!gray, line width=1pt] (\angletwodeg:\arcradius) arc (\angletwodeg:\phasezdeg:\arcradius);
\draw[line width=3pt, dashed, blue!80!gray] (\angletwodeg:\arcradius) arc (\angletwodeg:\phasezdeg:\arcradius);
\draw[purple!50!gray, line width=1pt] (\angletwodeg:\arcradius) arc (\angletwodeg:0:\arcradius);
\node[purple!50!gray, rotate=0] at (2.48, 0.45) {phase};
\end{tikzpicture}